\theoremstyle{plain}
\newtheorem{theorem}{Theorem}[section]
\newtheorem{proposition}[theorem]{Proposition}
\newtheorem{lemma}[theorem]{Lemma}
\theoremstyle{definition}
\newtheorem{example}[theorem]{Example}
\newtheorem{remark}[theorem]{Remark}
\numberwithin{equation}{section}
\DeclareMathOperator{\End}{End}
\newcommand{\half}{\frac{1}{2}}
\newcommand{\thalf}{\frac{3}{2}}
\renewcommand{\ge}{\geqslant}
\renewcommand{\ge}{\geqslant}
\renewcommand{\le}{\leqslant}
\begin{document}
\title[Partition Algebras]
{Jucys--Murphy Elements and a Presentation for Partition Algebras}

\author{John Enyang}
\curraddr{School of Mathematics and Statistics F07, University of Sydney NSW 2006, Australia}
\address{Department of Mathematics and Statistics, University of Melbourne, Parkville, VIC 3010 Australia}
\email{jenyang@unimelb.edu.au}

\makeatletter
\renewcommand{\subjclassname}{$2010$ Mathematics Subject Classification}
\makeatother

\subjclass[JEL]{Primary 20C08; Secondary 15E10, 20C30.}


\keywords{Partition algebras; Jucys--Murphy elements; central elements; presentation.}

\maketitle

\begin{abstract}
We give a new presentation for the partition algebras. This presentation was discovered in the course of establishing an inductive formula for the partition algebra Jucys--Murphy elements defined by Halverson and Ram [European J. Combin. \textbf{26} (2005), 869--921]. Using Schur--Weyl duality we show that our recursive formula and the original definition of Jucys--Murphy elements given by Halverson and Ram are equivalent.  The new presentation and inductive formula for the partition algebra Jucys--Murphy elements given in this paper are used to construct the seminormal representations for the partition algebras in a separate paper. 
\end{abstract}


\section{Introduction}
The partition algebras $A_k(n)$, for $k,n\in\mathbb{Z}_{\ge0},$ are a family of algebras defined in the work of Martin and Jones in ~\cite{Mar:1991}, \cite{Mar:1994}, \cite{Jo:1994} in connection with the Potts model and higher dimensional statistical mechanics. By~\cite{Jo:1994}, the partition algebra $A_k(n)$ is in Schur--Weyl duality with the symmetric group $\mathfrak{S}_n$ acting diagonally on the $k$--fold tensor product $V^{\otimes k}$ of its $n$--dimensional permutation representation $V$. In~\cite{Mar:2000}, Martin defined the partition algebras $A_{k+\half}(n)$, for $k,n\in\mathbb{Z}_{\ge0},$ as the centralisers of the subgroup $\mathfrak{S}_{n-1}\subseteq\mathfrak{S}_n$ acting on $V^{\otimes k}$. Including the algebras $A_{k+\half}(n)$ in the tower 
\begin{align}\label{tower}
A_0(n)\subseteq A_\half(n)\subseteq A_{1}(n)\subseteq A_{1+\half}(n)\subseteq\cdots 
\end{align}
allowed for the simultaneous analysis of the whole tower of algebras using the Jones Basic construction by Martin~\cite{Mar:2000} and Halverson and Ram~\cite{HR:2005}. 

Halverson and Ram~\cite{HR:2005} and East~\cite{Ea:2007} have given a presentation for the partition algebras in terms of Coxeter generators for the symmetric group and certain contractions. Halverson and Ram~\cite{HR:2005} used Schur--Weyl duality to show that certain diagrammatically defined elements in the partition algebras $A_k(n)$ and $A_{k+\half}(n)$ play an analogous role to the classical Jucys--Murphy elements in the group algebra of the symmetric group $\mathfrak{S}_k$. 

The Jucys--Murphy elements in the group algebra of the symmetric group respect the inclusions $\mathfrak{S}_{k-1}\subseteq\mathfrak{S}_k$ and are simultaneously diagonalisable in any irreducible representation of the symmetric group. The seminormal representations of the symmetric group are the irreducible matrix representations with respect to a basis on which the Jucys--Murphy elements act diagonally. These may be constructed inductively (see~\cite{VO:2005}, for example). 

This paper provides a new presentation for the partition algebras. This presentation was discovered in the course of establishing an inductive formula for the Jucys--Murphy elements for partition algebras given by Halverson and Ram. In a separate paper, we use this new presentation to construct seminormal representations for the partition algebras~\cite{En:2011}, solving a problem highlighted in the introduction to the paper of Halverson and Ram~\cite{HR:2005}.

In~\S\ref{a-1-1} we recall the presentation of the partition algebras given by Halverson and Ram~\cite{HR:2005} and East~\cite{Ea:2007}. In \S\ref{a-1-2} we state a recursion defining a family of operators 
\begin{align}
(L_i,L_{i+\half}:i=0,1,\ldots)\label{a-1-a}
\end{align} 
and establish that the operators~\eqref{a-1-a} form a commuting family with properties analogous to the Jucys--Murphy elements that arise in the representation theory of the symmetric group. Simultaneously, we establish some basic commutativity results for certain operators denoted
\begin{align}
(\sigma_{i},\sigma_{i+\half}:i=1,2,\ldots)\label{a-1-b}
\end{align} 
which arose in the recursive definition of the Jucys--Murphy elements~\eqref{a-1-a}. 
In~\S\ref{a-1-3} we show that the elements of~\eqref{a-1-b} are involutions which are related to the Coxeter generators for the symmetric group in a precise way. Using the relation between the involutions~\eqref{a-1-b} and the Coxeter generators for the symmetric group, and the properties established in \S\ref{a-1-2}, we derive a new presentation for the partition algebras. In~\S\ref{a-1-5} we give formulae for the actions of the Jucys--Murphy elements~\eqref{a-1-a} and the involutions~\eqref{a-1-b} on tensor space. Using Schur--Weyl duality, we demonstrate that the recursive definition of Jucys--Murphy elements given in~\S\ref{a-1-2} is equivalent to the definition of Jucys--Murphy elements given by Halverson and Ram~\cite{HR:2005}.

\subsection*{Acknowledgements} 
The author would like to express his gratitude to Arun Ram for numerous stimulating conversations throughout the course of this work. The author is also indebted to Fred Goodman and Susanna Fishel for several helpful discussions related to this research, and to the two anonymous referees for their  suggestions. This research was supported by the Australian Research Council (grant ARC DP--0986774) at the University of Melbourne.

\section{The Partition Algebras} \label{a-1-1}
In this section we follow the exposition by Halverson and Ram in~\cite{HR:2005}. For $k=1,2,\ldots,$ let 
\begin{align*}
{A}_k&=\{\text{set partitions of $\{1,2,\dots,k,1',2',\dots,k'\}$}\},\qquad\text{and,}\\
{A}_{k-\half}&=\{d\in A_k\,|\,\text{$k$ and $k'$ are in the same block of $d$}\}.
\end{align*}
Any element $\rho\in{A}_k$ may be represented as a graph with $k$ vertices in the top row, labelled from left to right, by $1,2,\dots,k$ and $k$ vertices in the bottom row, labelled, from left to right by $1',2',\dots,k'$, with vertex $i$ joined to vertex $j$ if $i$ and $j$ belong to the same block of $\rho$. The representation of a partition by a diagram is not unique; for example the partition 
\begin{align*}
\rho=\{\{1,1',3,4',5',6\},\{2,2',3',4,5,6'\}\}
\end{align*}
can be represented by the diagrams:
\begin{align*}
\rho&=\xy
(-25,5)*=0{\text{\tiny\textbullet}}="+";
(-15,5)*=0{\text{\tiny\textbullet}}="+";
(-5,5)*=0{\text{\tiny\textbullet}}="+";
(5,5)*=0{\text{\tiny\textbullet}}="+";
(15,5)*=0{\text{\tiny\textbullet}}="+";
(25,5)*=0{\text{\tiny\textbullet}}="+";
(-25,-5)*=0{\text{\tiny\textbullet}}="+";
(-15,-5)*=0{\text{\tiny\textbullet}}="+";
(-5,-5)*=0{\text{\tiny\textbullet}}="+";
(5,-5)*=0{\text{\tiny\textbullet}}="+";
(15,-5)*=0{\text{\tiny\textbullet}}="+";
(25,-5)*=0{\text{\tiny\textbullet}}="+";
(-25,5)*{}; (-25,-5)*{} **\dir{-};
(-15,5)*{}; (-15,-5)*{} **\dir{-};
(15,5)*{}; (25,-5)*{} **\dir{-};
(15,-5)*{}; (25,5)*{} **\dir{-};
(5,5)*{}; (15,5)*{} **\dir{-};
(5,-5)*{}; (15,-5)*{} **\dir{-};
(-5,-5)*{}; (5,5)*{} **\dir{-};
(-5,5)*{}; (5,-5)*{} **\dir{-};
(-15,-5)*{}; (-5,-5)*{} **\dir{-};
(-25,5)*{}; (-5,5)*{} **\crv{(-15,0)};
\endxy
&&\text{or}&&
\rho=\xy
(-25,5)*=0{\text{\tiny\textbullet}}="+";
(-15,5)*=0{\text{\tiny\textbullet}}="+";
(-5,5)*=0{\text{\tiny\textbullet}}="+";
(5,5)*=0{\text{\tiny\textbullet}}="+";
(15,5)*=0{\text{\tiny\textbullet}}="+";
(25,5)*=0{\text{\tiny\textbullet}}="+";
(-25,-5)*=0{\text{\tiny\textbullet}}="+";
(-15,-5)*=0{\text{\tiny\textbullet}}="+";
(-5,-5)*=0{\text{\tiny\textbullet}}="+";
(5,-5)*=0{\text{\tiny\textbullet}}="+";
(15,-5)*=0{\text{\tiny\textbullet}}="+";
(25,-5)*=0{\text{\tiny\textbullet}}="+";
(-25,5)*{}; (-25,-5)*{} **\dir{-};
(-15,5)*{}; (-15,-5)*{} **\dir{-};
(15,5)*{}; (25,-5)*{} **\dir{-};
(15,-5)*{}; (25,5)*{} **\dir{-};
(5,5)*{}; (25,-5)*{} **\dir{-};
(5,-5)*{}; (15,-5)*{} **\dir{-};
(5,-5)*{}; (25,5)*{} **\dir{-};
(-15,5)*{}; (-5,-5)*{} **\dir{-};
(-5,5)*{}; (5,-5)*{} **\dir{-};
(-25,-5)*{}; (-5,5)*{} **\dir{-};
(-15,5)*{}; (5,5)*{} **\crv{(-5,0)};
\endxy
\end{align*}
If $\rho_1,\rho_2\in{A}_k$, then the composition $\rho_1\circ\rho_2$ is the partition obtained by placing $\rho_1$ above $\rho_2$ and identifying each vertex in the bottom row of $\rho_1$ with the corresponding vertex in the top row of $\rho_2$ and deleting any components of the resulting diagram which contains only elements from the middle row. 
The composition product makes $A_k$ into an associative monoid with identity
\begin{align*}
1&=\thinspace\thinspace\xy
(-25,5)*=0{\text{\tiny\textbullet}}="+";
(-15,5)*=0{\text{\tiny\textbullet}}="+";
(-5,0)*=0{\cdots}="+";
(5,5)*=0{\text{\tiny\textbullet}}="+";
(-25,-5)*=0{\text{\tiny\textbullet}}="+";
(-15,-5)*=0{\text{\tiny\textbullet}}="+";
(5,-5)*=0{\text{\tiny\textbullet}}="+";
(-25,5)*{}; (-25,-5)*{} **\dir{-};
(-15,5)*{}; (-15,-5)*{} **\dir{-};
(5,5)*{}; (5,-5)*{} **\dir{-};
\endxy\,.
\end{align*}

Let $z$ be an indeterminant and $R=\mathbb{Z}[z]$. The partition algebra $\mathcal{A}_k(z)$ is the $R$--module freely generated by $A_k$, equipped with the product 
\begin{align*}
\rho_1\rho_2=z^{\ell}\rho_1\circ\rho_2,&&\text{for $\rho_1,\rho_2\in{A}_k$,}
\end{align*}
where $\ell$ is the number of blocks removed from the middle row in constructing the composition $\rho_1\circ\rho_2$. Let $\mathcal{A}_{k-\half}(z)$ denote the subalgebra of $\mathcal{A}_k(z)$ generated by $A_{k-\half}$. A presentation for $\mathcal{A}_k(z)$ has been given by Halverson and Ram~\cite{HR:2005} and East~\cite{Ea:2007}. 
\begin{theorem}[Theorem~1.11 of~\cite{HR:2005}]\label{hr-presentation}
If $k=1,2,\ldots,$ then the partition algebra $\mathcal{A}_{k}(z)$ is the unital associative $R$--algebra presented by the generators 
\begin{align*}
p_1,p_{1+\half},p_2,p_{2+\half},\ldots,p_{k},s_1,s_2,\ldots,s_{k-1},
\end{align*}
and the relations
\begin{enumerate}
\item (Coxeter relations) \label{cox-0}
\begin{enumerate}[label=(\roman{*}), ref=\roman{*}]
\item $s_i^2=1$, for $i=1,\ldots, k-1$.\label{cox-i}
\item $s_is_j=s_js_i$, if $j\ne i+1$.\label{cox-ii}
\item $s_is_{i+1}s_i=s_{i+1}s_is_{i+1}$, for $i=1,\ldots,k-2$. \label{cox-iii}
\end{enumerate}
\item (Idempotent relations)\label{ide-0}
\begin{enumerate}[label=(\roman{*}), ref=\roman{*}]
\item $p_i^2=z p_i$, for $i=1,\ldots,k$. \label{ide-i}
\item $p_{i+\half}^2=p_{i+\half}$, for $i=1,\ldots,k-1$. \label{ide-ii}
\item $s_ip_{i+\half}=p_{i+\half}s_i=p_{i+\half}$, for $i=1,\ldots,k-1$. \label{ide-iii}
\item $s_ip_ip_{i+1}=p_ip_{i+1}s_i=p_ip_{i+1}$, for $i=1,\ldots,k-1$. \label{ide-iv}
\end{enumerate}
\item (Commutation relations)\label{com-0}
\begin{enumerate}[label=(\roman{*}), ref=\roman{*}]
\item $p_ip_j=p_jp_i$, for $i=1,\ldots k$ and $j=1,\ldots,k$. \label{com-i}
\item $p_{i+\half}p_{j+\half}=p_{j+\half}p_{i+\half}$, for $i=1,\ldots k-1$ and $j=1,\ldots,k-1$.\label{com-ii}
\item $p_ip_{j+\half}=p_{j+\half}p_i$, for $j\ne i,i+1$. \label{com-iii}
\item $s_i p_j=p_js_i$, for $j\ne i,i+1$.  \label{com-iv}
\item $s_ip_{j+\half}=p_{j+\half}s_i$, for $j\ne i-1,i+1$.\label{com-v}
\item $s_ip_is_i=p_{i+1}$, for $i=1,\ldots,k-1$. \label{com-vi}
\item $s_{i}p_{i-\half}s_{i}=s_{i-1}p_{i+\half}s_{i-1}$, for $i=2,\ldots,k-1$. \label{com-vii}
\end{enumerate}
\item (Contraction relations)\label{con-0}
\begin{enumerate}[label=(\roman{*}), ref=\roman{*}]
\item $p_{i+\half}p_{j}p_{i+\half}=p_{i+\half}$, for $j=i,i+1$.  \label{con-i} 
\item $p_ip_{j-\half}p_i=p_i$, for $j=i,i+1$. \label{con-ii}
\end{enumerate}
\end{enumerate}
\end{theorem}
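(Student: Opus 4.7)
The plan is to prove this presentation by the standard two-step strategy: exhibit a surjective homomorphism from the abstract algebra onto $\mathcal{A}_k(z)$, and then bound the dimension of the abstract algebra from above by $|A_k|=B(2k)$, the $2k$-th Bell number. Let $B_k$ denote the abstract $R$-algebra with the given generators and relations. First, I would identify an element of $A_k$ for each generator: $s_i$ as the transposition diagram exchanging strands $i$ and $i+1$, the element $p_i$ as the diagram obtained from the identity by splitting the $i$-th propagating strand into the two singleton blocks $\{i\}$ and $\{i'\}$, and $p_{i+\half}$ as the diagram obtained from the identity by fusing the strands at positions $i$ and $i+1$ into the single block $\{i,i+1,i',(i+1)'\}$. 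Verifying that each Coxeter, idempotent, commutation, and contraction relation holds in $\mathcal{A}_k(z)$ under these assignments is a tedious but completely mechanical case analysis using the composition rule for partition diagrams; this yields a well-defined $R$-algebra homomorphism $\phi\colon B_k \to \mathcal{A}_k(z)$.

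Surjectivity of $\phi$ is proved by showing that every partition diagram $\rho\in A_k$ can be written as an explicit word in the generators. I would decompose $\rho$ according to its propagating blocks (those meeting both rows) and its non-propagating blocks. Conjugates of the $p_{i+\half}$ by symmetric-group elements (themselves products of the $s_i$) can be used to realise any desired horizontal block, while conjugates of products of the $p_i$ realise any prescribed propagation pattern; combining these pieces and normalising by the idempotent relations recovers $\rho$ up to a scalar $z^\ell$. Injectivity is then established by producing a spanning set of $B_k$ of size at most $|A_k|$, which I would do inductively along the tower $B_0 \subseteq B_\half \subseteq B_1 \subseteq \cdots$. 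Assuming a spanning set of size $|A_{k-\half}|$ for $B_{k-\half}$, the commutation and contraction relations allow every element of $B_k$ to be rewritten as a sum of terms of the form $u \cdot x \cdot v$ with $x \in B_{k-\half}$ and $u, v$ drawn from an explicit combinatorial set of coset-type representatives whose cardinality, combined with the inductive spanning set, totals exactly $|A_k|$.

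The principal difficulty lies not in verifying the relations individually---each is an elementary diagrammatic check---but in organising this normal-form reduction so that its count matches $|A_k|$ on the nose rather than something larger. One must in particular show that the seven commutation relations, together with the two contraction relations, suffice to reduce every word in the generators to a canonical form indexed by set partitions, with no further relations needed. This combinatorial bookkeeping is the delicate heart of the theorem, and is indeed the content of the original proofs by Halverson--Ram~\cite{HR:2005} and East~\cite{Ea:2007}.
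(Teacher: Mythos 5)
The paper does not prove this statement: it is quoted verbatim as Theorem~1.11 of Halverson--Ram, with East's paper cited as an independent source, so there is no internal proof to compare yours against. Your outline is the standard strategy those references use --- define a map from the abstract algebra $B_k$ to $\mathcal{A}_k(z)$ by the diagrammatic assignments (which match the pictures in the paper), check the relations diagrammatically, prove surjectivity by factoring an arbitrary diagram into generators, and then bound $\dim_R B_k$ above by $|A_k|=B(2k)$ via a normal form --- so as a plan it is sound and faithful to the literature.

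It is, however, a plan and not a proof. The entire content of the theorem is concentrated in the step you explicitly defer: producing a spanning set of $B_k$ of cardinality exactly $B(2k)$ using only the listed relations, i.e.\ showing no relations are missing. Without an explicit normal form (East's proof, for instance, works through a careful monoid presentation first and tracks the twist by powers of $z$ separately) the argument establishes only that $\mathcal{A}_k(z)$ is a quotient of $B_k$, which is the easy half. Two smaller points to be careful about if you were to execute this: (i) in the inductive step the ``subalgebra'' $B_{k-\half}\subseteq B_k$ is a priori only a quotient of the abstract algebra on the smaller generating set, which is harmless for an upper bound on dimension but should be said; and (ii) the scalar $z^{\ell}$ produced when middle components are deleted must be tracked through the normal-form reduction, since $\mathcal{A}_k(z)$ is free over $R=\mathbb{Z}[z]$ rather than over a field, so a spanning-set count over $R$ (not a dimension count over a field) is what is actually needed.
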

The above relations also imply that:
\begin{align*}
&p_{i+\half}s_{i\pm1}p_{i+\half}=p_{i+\half}p_{i\pm1+\half},\\
&p_is_ip_i=p_ip_{i+1}=p_{i+1}s_ip_{i+1},\\
&p_{i}p_{i+\half}p_{i+1}=p_is_i,\\
&p_{i+1}p_{i+\half}p_i=p_{i+1}s_i.
\end{align*}
In Theorem~\ref{hr-presentation}, the following identifications have been made:
\begin{align*}
s_i&=\thinspace\thinspace\xy
(-5,9)*=0{i}="+";
(5,9)*=0{i+1}="+";
(-25,5)*=0{\text{\tiny\textbullet}}="+";
(-25,-5)*=0{\text{\tiny\textbullet}}="+";
(-20,0)*=0{\cdots}="+";
(20,0)*=0{\cdots}="+";
(-15,5)*=0{\text{\tiny\textbullet}}="+";
(-15,-5)*=0{\text{\tiny\textbullet}}="+";
(-5,5)*=0{\text{\tiny\textbullet}}="+";
(-5,-5)*=0{\text{\tiny\textbullet}}="+";
(5,5)*=0{\text{\tiny\textbullet}}="+";
(5,-5)*=0{\text{\tiny\textbullet}}="+";
(15,5)*=0{\text{\tiny\textbullet}}="+";
(15,-5)*=0{\text{\tiny\textbullet}}="+";
(25,5)*=0{\text{\tiny\textbullet}}="+";
(25,-5)*=0{\text{\tiny\textbullet}}="+";
(-25,5)*{}; (-25,-5)*{} **\dir{-};
(-15,5)*{}; (-15,-5)*{} **\dir{-};
(-5,5)*{}; (5,-5)*{} **\dir{-};
(5,5)*{}; (-5,-5)*{} **\dir{-};
(15,5)*{}; (15,-5)*{} **\dir{-};
(25,5)*{}; (25,-5)*{} **\dir{-};
\endxy
&&\text{and}&&
p_j=\thinspace\thinspace\xy
(-5,9)*=0{j}="+";
(-25,5)*=0{\text{\tiny\textbullet}}="+";
(-25,-5)*=0{\text{\tiny\textbullet}}="+";
(-20,0)*=0{\cdots}="+";
(10,0)*=0{\cdots}="+";
(-15,5)*=0{\text{\tiny\textbullet}}="+";
(-15,-5)*=0{\text{\tiny\textbullet}}="+";
(-5,5)*=0{\text{\tiny\textbullet}}="+";
(-5,-5)*=0{\text{\tiny\textbullet}}="+";
(5,5)*=0{\text{\tiny\textbullet}}="+";
(5,-5)*=0{\text{\tiny\textbullet}}="+";
(15,5)*=0{\text{\tiny\textbullet}}="+";
(15,-5)*=0{\text{\tiny\textbullet}}="+";
(-25,5)*{}; (-25,-5)*{} **\dir{-};
(-15,5)*{}; (-15,-5)*{} **\dir{-};
(5,5)*{}; (5,-5)*{} **\dir{-};
(15,5)*{}; (15,-5)*{} **\dir{-};
\endxy
\end{align*}
and
\begin{align*}
p_{i+\half}&=\thinspace\thinspace\xy
(-5,9)*=0{i}="+";
(5,9)*=0{i+1}="+";
(-25,5)*=0{\text{\tiny\textbullet}}="+";
(-25,-5)*=0{\text{\tiny\textbullet}}="+";
(-20,0)*=0{\cdots}="+";
(20,0)*=0{\cdots}="+";
(-15,5)*=0{\text{\tiny\textbullet}}="+";
(-15,-5)*=0{\text{\tiny\textbullet}}="+";
(-5,5)*=0{\text{\tiny\textbullet}}="+";
(-5,-5)*=0{\text{\tiny\textbullet}}="+";
(5,5)*=0{\text{\tiny\textbullet}}="+";
(5,-5)*=0{\text{\tiny\textbullet}}="+";
(15,5)*=0{\text{\tiny\textbullet}}="+";
(15,-5)*=0{\text{\tiny\textbullet}}="+";
(25,5)*=0{\text{\tiny\textbullet}}="+";
(25,-5)*=0{\text{\tiny\textbullet}}="+";
(-25,5)*{}; (-25,-5)*{} **\dir{-};
(-15,5)*{}; (-15,-5)*{} **\dir{-};
(-5,5)*{}; (-5,-5)*{} **\dir{-};
(5,5)*{}; (5,-5)*{} **\dir{-};
(15,5)*{}; (15,-5)*{} **\dir{-};
(-5,5)*{}; (5,5)*{} **\dir{-};
(-5,-5)*{}; (5,-5)*{} **\dir{-};
(25,5)*{}; (25,-5)*{} **\dir{-};
\endxy\,.
\end{align*}
We also let $\mathfrak{S}_k$ denote the symmetric group on $k$ letters which is generated by $s_1,\ldots,s_{k-1}$. If $u\in\mathfrak{S}_k\subset\mathcal{A}_k(z)$, and $\rho\in\mathcal{A}_k(z)$, we will sometimes write $\rho^u=u\rho u^{-1}$. Let $*:\mathcal{A}_k(z)\to \mathcal{A}_k(z)$ denote the algebra anti--involution which, given $\rho\in A_k$, for $i=1,\ldots k$, interchanges $i$ and $i'$ in $\rho$. Then $*$ reflects each element of the diagram basis for $\mathcal{A}_k(z)$ in the horizontal axis, and satisfies
\begin{align*}
u^*=u^{-1}&&\text{(for $u\in\mathfrak{S}_k$)}
\end{align*}
and 
\begin{align*}
p_i^*=p_i&&\text{(for $i=1,\ldots,k$)}&&\text{and}&&p_{j+\half}^*=p_{j+\half}&&\text{(for $j=1,\ldots,k-1$).}
\end{align*}
Restricting the map $*$ from $\mathcal{A}_{k}(z)$ to $\mathcal{A}_{k-\half}(z)$, gives an algebra anti involution of $\mathcal{A}_{k-\half}(z)$ which we also denote by $*$. 
\section{Jucys--Murphy Elements}\label{a-1-2}
In this section we recursively define a family of Jucys--Murphy elements in $\mathcal{A}_k(z)$ and $\mathcal{A}_{k+\half}(z)$. It will be shown in \S\ref{a-1-5} that the recursive formula given below is equivalent to the combinatorial definition of Jucys--Murphy elements given by Halverson and Ram~\cite{HR:2005}. 

Let $(\sigma_i:i=1,2,\dots)$ and $(L_i:i=0,1,\dots)$ be given by 
\begin{align*}
L_0=0, &&L_1=p_1, &&\sigma_1=1,&&\text{and,}&&\sigma_2=s_1,
\end{align*} 
and, for $i=1,2,\dots,$ 
\begin{align}\label{jm-i-1}
L_{i+1}=-s_iL_ip_{i+\half}-p_{i+\half}L_is_i+p_{i+\half}L_ip_{i+1}p_{i+\half}+s_{i}L_is_{i}+\sigma_{i+1},
\end{align}
where, for $i=2,3,\dots,$
\begin{multline}\label{sigma-i-1}
\sigma_{i+1}=s_{i-1}s_i\sigma_is_is_{i-1}+s_ip_{i-\half}L_{i-1}s_ip_{i-\half}s_i+p_{i-\half}L_{i-1}s_ip_{i-\half}\\
-s_ip_{i-\half}L_{i-1}s_{i-1}p_{i+\half}p_ip_{i-\half}-p_{i-\half}p_ip_{i+\half}s_{i-1}L_{i-1}p_{i-\half}s_i.
\end{multline}
Define $(\sigma_{i+\half}:i=1,2,\dots)$ and $(L_{i+\half}:i=0,1,\dots)$ by 
\begin{align*}
L_{\half}=0,&&\sigma_{\half}=1,&&\text{and,}&&\sigma_{1+\half}=1,
\end{align*}
and, for $i=1,2,\dots,$
\begin{align}\label{jm-i-2}
L_{i+\half}=-L_ip_{i+\half}-p_{i+\half}L_i+p_{i+\half}L_ip_{i}p_{i+\half}+s_iL_{i-\half}s_i+\sigma_{i+\half},
\end{align}
where, for $i=2,3,\dots,$
\begin{multline}\label{sigma-i-2}
\sigma_{i+\half}=s_{i-1}s_i\sigma_{i-\half}s_is_{i-1}+p_{i-\half}L_{i-1}s_ip_{i-\half}s_i+s_ip_{i-\half}L_{i-1}s_ip_{i-\half}\\
-p_{i-\half}L_{i-1}s_{i-1}p_{i+\half}p_ip_{i-\half}-s_ip_{i-\half}p_ip_{i+\half}s_{i-1}L_{i-1}p_{i-\half}s_i.
\end{multline}
Rewriting the last summand in~\eqref{sigma-i-2} as 
\begin{align*}
s_ip_{i-\half}p_ip_{i+\half}s_{i-1}L_{i-1}p_{i-\half}s_i
&=s_{i-1}p_{i+\half}s_{i-1}s_ip_ip_{i+\half}s_{i-1}L_{i-1}s_is_{i-1}p_{i+\half}s_{i-1}\\
&=s_{i-1}p_{i+\half}s_{i-1}s_ip_{i+1}p_{i+\half}s_{i-1}s_iL_{i-1}s_{i-1}p_{i+\half}s_{i-1}\\
&=s_{i-1}p_{i+\half}s_{i-1}p_{i+1}s_{i-1}s_ip_{i-\half}L_{i-1}s_{i-1}p_{i+\half}s_{i-1}\\
&=s_{i-1}p_{i+\half}p_{i}p_{i-\half}L_{i-1}s_{i-1}p_{i+\half}s_{i-1},
\end{align*}
the expression~\eqref{sigma-i-2} becomes 
 \begin{multline}\label{sigma-i-3}
\sigma_{i+\half}=s_{i-1}s_i\sigma_{i-\half}s_is_{i-1}+p_{i-\half}L_{i-1}s_{i-1}p_{i+\half}s_{i-1}+s_{i-1}p_{i+\half}s_{i-1}L_{i-1}p_{i-\half}\\
-p_{i-\half}L_{i-1}s_{i-1}p_{i+\half}p_ip_{i-\half}-s_{i-1}p_{i+\half}p_{i}p_{i-\half}L_{i-1}s_{i-1}p_{i+\half}s_{i-1}.
\end{multline}
Using induction, it follows that if $i=0,1,\ldots,$ then $\sigma_{i+\half}\in\mathcal{A}_{i+\half}(z)$, and $L_{i+\half}\in\mathcal{A}_{i+\half}(z)$. Observe that if $i=0,1,\dots,$ then  $(L_i)^*=L_i$ and $(\sigma_{i+1})^*=\sigma_{i+1}$.  The fact that $(L_{i+\half})^*=L_{i+\half}$ and $(\sigma_{i+\half})^*=\sigma_{i+\half}$ will be shown in Proposition~\ref{invariant}.
\begin{example}
In terms of the diagram presentation for $\mathcal{A}_3(z)$, we have:
\begin{align*}
&L_{2}=\,\xy
(-56,4)*=0{\text{\tiny\textbullet}}="+";
(-48,4)*=0{\text{\tiny\textbullet}}="+";
(-40,4)*=0{\text{\tiny\textbullet}}="+";
(-32,4)*=0{\text{\tiny\textbullet}}="+";
(-24,4)*=0{\text{\tiny\textbullet}}="+";
(-16,4)*=0{\text{\tiny\textbullet}}="+";
(-8,4)*=0{\text{\tiny\textbullet}}="+";
(0,4)*=0{\text{\tiny\textbullet}}="+";
(8,4)*=0{\text{\tiny\textbullet}}="+";
(16,4)*=0{\text{\tiny\textbullet}}="+";
(24,4)*=0{\text{\tiny\textbullet}}="+";
(32,4)*=0{\text{\tiny\textbullet}}="+";
(40,4)*=0{\text{\tiny\textbullet}}="+";
(48,4)*=0{\text{\tiny\textbullet}}="+";
(56,4)*=0{\text{\tiny\textbullet}}="+";
(-56,-4)*=0{\text{\tiny\textbullet}}="+";
(-48,-4)*=0{\text{\tiny\textbullet}}="+";
(-40,-4)*=0{\text{\tiny\textbullet}}="+";
(-32,-4)*=0{\text{\tiny\textbullet}}="+";
(-24,-4)*=0{\text{\tiny\textbullet}}="+";
(-16,-4)*=0{\text{\tiny\textbullet}}="+";
(-8,-4)*=0{\text{\tiny\textbullet}}="+";
(0,-4)*=0{\text{\tiny\textbullet}}="+";
(8,-4)*=0{\text{\tiny\textbullet}}="+";
(16,-4)*=0{\text{\tiny\textbullet}}="+";
(24,-4)*=0{\text{\tiny\textbullet}}="+";
(32,-4)*=0{\text{\tiny\textbullet}}="+";
(40,-4)*=0{\text{\tiny\textbullet}}="+";
(48,-4)*=0{\text{\tiny\textbullet}}="+";
(56,-4)*=0{\text{\tiny\textbullet}}="+";
(-56,4)*{}; (-56,-4)*{} **\dir{-};
(-40,4)*{}; (-40,-4)*{} **\dir{-};
(-32,4)*{}; (-32,-4)*{} **\dir{-};
(-16,4)*{}; (-16,-4)*{} **\dir{-};
(8,4)*{}; (8,-4)*{} **\dir{-};
(16,4)*{}; (16,-4)*{} **\dir{-};
(32,4)*{}; (32,-4)*{} **\dir{-};
(56,4)*{}; (56,-4)*{} **\dir{-};
(-56,-4)*{}; (-48,-4)*{} **\dir{-};
(-32,4)*{}; (-24,4)*{} **\dir{-};
(-8,4)*{}; (0,4)*{} **\dir{-};
(-8,-4)*{}; (0,-4)*{} **\dir{-};
(40,4)*{}; (48,-4)*{} **\dir{-};
(40,-4)*{}; (48,4)*{} **\dir{-};
(-59.5,0)*=0{-}="+";
(-36,0)*=0{-}="+";
(-12,0)*=0{+}="+";
(12,0)*=0{+}="+";
(36,0)*=0{+}="+";
\endxy
\intertext{and} 
&\sigma_{3}=\quad\,\xy
(-56,4)*=0{\text{\tiny\textbullet}}="+";
(-48,4)*=0{\text{\tiny\textbullet}}="+";
(-40,4)*=0{\text{\tiny\textbullet}}="+";
(-32,4)*=0{\text{\tiny\textbullet}}="+";
(-24,4)*=0{\text{\tiny\textbullet}}="+";
(-16,4)*=0{\text{\tiny\textbullet}}="+";
(-8,4)*=0{\text{\tiny\textbullet}}="+";
(0,4)*=0{\text{\tiny\textbullet}}="+";
(8,4)*=0{\text{\tiny\textbullet}}="+";
(16,4)*=0{\text{\tiny\textbullet}}="+";
(24,4)*=0{\text{\tiny\textbullet}}="+";
(32,4)*=0{\text{\tiny\textbullet}}="+";
(40,4)*=0{\text{\tiny\textbullet}}="+";
(48,4)*=0{\text{\tiny\textbullet}}="+";
(56,4)*=0{\text{\tiny\textbullet}}="+";
(-56,-4)*=0{\text{\tiny\textbullet}}="+";
(-48,-4)*=0{\text{\tiny\textbullet}}="+";
(-40,-4)*=0{\text{\tiny\textbullet}}="+";
(-32,-4)*=0{\text{\tiny\textbullet}}="+";
(-24,-4)*=0{\text{\tiny\textbullet}}="+";
(-16,-4)*=0{\text{\tiny\textbullet}}="+";
(-8,-4)*=0{\text{\tiny\textbullet}}="+";
(0,-4)*=0{\text{\tiny\textbullet}}="+";
(8,-4)*=0{\text{\tiny\textbullet}}="+";
(16,-4)*=0{\text{\tiny\textbullet}}="+";
(24,-4)*=0{\text{\tiny\textbullet}}="+";
(32,-4)*=0{\text{\tiny\textbullet}}="+";
(40,-4)*=0{\text{\tiny\textbullet}}="+";
(48,-4)*=0{\text{\tiny\textbullet}}="+";
(56,-4)*=0{\text{\tiny\textbullet}}="+";
(-56,4)*{}; (-56,-4)*{} **\dir{-};
(16,4)*{}; (16,-4)*{} **\dir{-};
(40,4)*{}; (40,-4)*{} **\dir{-};
(-48,4)*{}; (-40,-4)*{} **\dir{-};
(-48,-4)*{}; (-40,4)*{} **\dir{-};
(-32,4)*{}; (-24,-4)*{} **\dir{-};
(-32,-4)*{}; (-24,4)*{} **\dir{-};
(-24,4)*{}; (-16,-4)*{} **\dir{-};
(-24,-4)*{}; (-16,4)*{} **\dir{-};
(-8,4)*{}; (0,4)*{} **\dir{-};
(-8,-4)*{}; (0,-4)*{} **\dir{-};
(0,4)*{}; (8,-4)*{} **\dir{-};
(0,-4)*{}; (8,4)*{} **\dir{-};
(16,-4)*{}; (24,-4)*{} **\dir{-};
(24,4)*{}; (32,-4)*{} **\dir{-};
(24,-4)*{}; (32,4)*{} **\dir{-};
(40,4)*{}; (48,4)*{} **\dir{-};
(48,4)*{}; (56,-4)*{} **\dir{-};
(48,-4)*{}; (56,4)*{} **\dir{-};
(-36,0)*=0{+}="+";
(-12,0)*=0{+}="+";
(12,0)*=0{-}="+";
(36,0)*=0{-}="+";
\endxy\,\,.
\intertext{Using the above expressions for $L_2$ and $\sigma_3$, we can write $L_3$ as}            
&\quad\xy
(-66,24)*=0{L_3=}="+";
(-56,28)*=0{\text{\tiny\textbullet}}="+";
(-48,28)*=0{\text{\tiny\textbullet}}="+";
(-40,28)*=0{\text{\tiny\textbullet}}="+";
(-32,28)*=0{\text{\tiny\textbullet}}="+";
(-24,28)*=0{\text{\tiny\textbullet}}="+";
(-16,28)*=0{\text{\tiny\textbullet}}="+";
(-8,28)*=0{\text{\tiny\textbullet}}="+";
(-0,28)*=0{\text{\tiny\textbullet}}="+";
(8,28)*=0{\text{\tiny\textbullet}}="+";
(16,28)*=0{\text{\tiny\textbullet}}="+";
(24,28)*=0{\text{\tiny\textbullet}}="+";
(32,28)*=0{\text{\tiny\textbullet}}="+";
(40,28)*=0{\text{\tiny\textbullet}}="+";
(48,28)*=0{\text{\tiny\textbullet}}="+";
(56,28)*=0{\text{\tiny\textbullet}}="+";
(-56,20)*=0{\text{\tiny\textbullet}}="+";
(-48,20)*=0{\text{\tiny\textbullet}}="+";
(-40,20)*=0{\text{\tiny\textbullet}}="+";
(-32,20)*=0{\text{\tiny\textbullet}}="+";
(-24,20)*=0{\text{\tiny\textbullet}}="+";
(-16,20)*=0{\text{\tiny\textbullet}}="+";
(-8,20)*=0{\text{\tiny\textbullet}}="+";
(-0,20)*=0{\text{\tiny\textbullet}}="+";
(8,20)*=0{\text{\tiny\textbullet}}="+";
(16,20)*=0{\text{\tiny\textbullet}}="+";
(24,20)*=0{\text{\tiny\textbullet}}="+";
(32,20)*=0{\text{\tiny\textbullet}}="+";
(40,20)*=0{\text{\tiny\textbullet}}="+";
(48,20)*=0{\text{\tiny\textbullet}}="+";
(56,20)*=0{\text{\tiny\textbullet}}="+";
(-56,16)*=0{\text{\tiny\textbullet}}="+";
(-48,16)*=0{\text{\tiny\textbullet}}="+";
(-40,16)*=0{\text{\tiny\textbullet}}="+";
(-32,16)*=0{\text{\tiny\textbullet}}="+";
(-24,16)*=0{\text{\tiny\textbullet}}="+";
(-16,16)*=0{\text{\tiny\textbullet}}="+";
(-8,16)*=0{\text{\tiny\textbullet}}="+";
(-0,16)*=0{\text{\tiny\textbullet}}="+";
(8,16)*=0{\text{\tiny\textbullet}}="+";
(16,16)*=0{\text{\tiny\textbullet}}="+";
(24,16)*=0{\text{\tiny\textbullet}}="+";
(32,16)*=0{\text{\tiny\textbullet}}="+";
(40,16)*=0{\text{\tiny\textbullet}}="+";
(48,16)*=0{\text{\tiny\textbullet}}="+";
(56,16)*=0{\text{\tiny\textbullet}}="+";
(-56,8)*=0{\text{\tiny\textbullet}}="+";
(-48,8)*=0{\text{\tiny\textbullet}}="+";
(-40,8)*=0{\text{\tiny\textbullet}}="+";
(-32,8)*=0{\text{\tiny\textbullet}}="+";
(-24,8)*=0{\text{\tiny\textbullet}}="+";
(-16,8)*=0{\text{\tiny\textbullet}}="+";
(-8,8)*=0{\text{\tiny\textbullet}}="+";
(-0,8)*=0{\text{\tiny\textbullet}}="+";
(8,8)*=0{\text{\tiny\textbullet}}="+";
(16,8)*=0{\text{\tiny\textbullet}}="+";
(24,8)*=0{\text{\tiny\textbullet}}="+";
(32,8)*=0{\text{\tiny\textbullet}}="+";
(40,8)*=0{\text{\tiny\textbullet}}="+";
(48,8)*=0{\text{\tiny\textbullet}}="+";
(56,8)*=0{\text{\tiny\textbullet}}="+";
(-56,4)*=0{\text{\tiny\textbullet}}="+";
(-48,4)*=0{\text{\tiny\textbullet}}="+";
(-40,4)*=0{\text{\tiny\textbullet}}="+";
(-32,4)*=0{\text{\tiny\textbullet}}="+";
(-24,4)*=0{\text{\tiny\textbullet}}="+";
(-16,4)*=0{\text{\tiny\textbullet}}="+";
(-8,4)*=0{\text{\tiny\textbullet}}="+";
(-0,4)*=0{\text{\tiny\textbullet}}="+";
(8,4)*=0{\text{\tiny\textbullet}}="+";
(16,4)*=0{\text{\tiny\textbullet}}="+";
(24,4)*=0{\text{\tiny\textbullet}}="+";
(32,4)*=0{\text{\tiny\textbullet}}="+";
(40,4)*=0{\text{\tiny\textbullet}}="+";
(48,4)*=0{\text{\tiny\textbullet}}="+";
(56,4)*=0{\text{\tiny\textbullet}}="+";
(-56,-4)*=0{\text{\tiny\textbullet}}="+";
(-48,-4)*=0{\text{\tiny\textbullet}}="+";
(-40,-4)*=0{\text{\tiny\textbullet}}="+";
(-32,-4)*=0{\text{\tiny\textbullet}}="+";
(-24,-4)*=0{\text{\tiny\textbullet}}="+";
(-16,-4)*=0{\text{\tiny\textbullet}}="+";
(-8,-4)*=0{\text{\tiny\textbullet}}="+";
(-0,-4)*=0{\text{\tiny\textbullet}}="+";
(8,-4)*=0{\text{\tiny\textbullet}}="+";
(16,-4)*=0{\text{\tiny\textbullet}}="+";
(24,-4)*=0{\text{\tiny\textbullet}}="+";
(32,-4)*=0{\text{\tiny\textbullet}}="+";
(40,-4)*=0{\text{\tiny\textbullet}}="+";
(48,-4)*=0{\text{\tiny\textbullet}}="+";
(56,-4)*=0{\text{\tiny\textbullet}}="+";
(-56,-16)*=0{\text{\tiny\textbullet}}="+";
(-48,-16)*=0{\text{\tiny\textbullet}}="+";
(-40,-16)*=0{\text{\tiny\textbullet}}="+";
(-32,-16)*=0{\text{\tiny\textbullet}}="+";
(-24,-16)*=0{\text{\tiny\textbullet}}="+";
(-16,-16)*=0{\text{\tiny\textbullet}}="+";
(-8,-16)*=0{\text{\tiny\textbullet}}="+";
(-0,-16)*=0{\text{\tiny\textbullet}}="+";
(8,-16)*=0{\text{\tiny\textbullet}}="+";
(16,-16)*=0{\text{\tiny\textbullet}}="+";
(24,-16)*=0{\text{\tiny\textbullet}}="+";
(32,-16)*=0{\text{\tiny\textbullet}}="+";
(40,-16)*=0{\text{\tiny\textbullet}}="+";
(48,-16)*=0{\text{\tiny\textbullet}}="+";
(56,-16)*=0{\text{\tiny\textbullet}}="+";
(-56,-8)*=0{\text{\tiny\textbullet}}="+";
(-48,-8)*=0{\text{\tiny\textbullet}}="+";
(-40,-8)*=0{\text{\tiny\textbullet}}="+";
(-32,-8)*=0{\text{\tiny\textbullet}}="+";
(-24,-8)*=0{\text{\tiny\textbullet}}="+";
(-16,-8)*=0{\text{\tiny\textbullet}}="+";
(-8,-8)*=0{\text{\tiny\textbullet}}="+";
(-0,-8)*=0{\text{\tiny\textbullet}}="+";
(8,-8)*=0{\text{\tiny\textbullet}}="+";
(16,-8)*=0{\text{\tiny\textbullet}}="+";
(24,-8)*=0{\text{\tiny\textbullet}}="+";
(32,-8)*=0{\text{\tiny\textbullet}}="+";
(40,-8)*=0{\text{\tiny\textbullet}}="+";
(48,-8)*=0{\text{\tiny\textbullet}}="+";
(56,-8)*=0{\text{\tiny\textbullet}}="+";
(-56,-28)*=0{\text{\tiny\textbullet}}="+";
(-48,-28)*=0{\text{\tiny\textbullet}}="+";
(-40,-28)*=0{\text{\tiny\textbullet}}="+";
(-32,-28)*=0{\text{\tiny\textbullet}}="+";
(-24,-28)*=0{\text{\tiny\textbullet}}="+";
(-16,-28)*=0{\text{\tiny\textbullet}}="+";
(-8,-28)*=0{\text{\tiny\textbullet}}="+";
(-0,-28)*=0{\text{\tiny\textbullet}}="+";
(8,-28)*=0{\text{\tiny\textbullet}}="+";
(16,-28)*=0{\text{\tiny\textbullet}}="+";
(24,-28)*=0{\text{\tiny\textbullet}}="+";
(32,-28)*=0{\text{\tiny\textbullet}}="+";
(40,-28)*=0{\text{\tiny\textbullet}}="+";
(48,-28)*=0{\text{\tiny\textbullet}}="+";
(56,-28)*=0{\text{\tiny\textbullet}}="+";
(-56,-20)*=0{\text{\tiny\textbullet}}="+";
(-48,-20)*=0{\text{\tiny\textbullet}}="+";
(-40,-20)*=0{\text{\tiny\textbullet}}="+";
(-32,-20)*=0{\text{\tiny\textbullet}}="+";
(-24,-20)*=0{\text{\tiny\textbullet}}="+";
(-16,-20)*=0{\text{\tiny\textbullet}}="+";
(-8,-20)*=0{\text{\tiny\textbullet}}="+";
(-0,-20)*=0{\text{\tiny\textbullet}}="+";
(8,-20)*=0{\text{\tiny\textbullet}}="+";
(16,-20)*=0{\text{\tiny\textbullet}}="+";
(24,-20)*=0{\text{\tiny\textbullet}}="+";
(32,-20)*=0{\text{\tiny\textbullet}}="+";
(40,-20)*=0{\text{\tiny\textbullet}}="+";
(48,-20)*=0{\text{\tiny\textbullet}}="+";
(56,-20)*=0{\text{\tiny\textbullet}}="+";
(-56,28)*{}; (-48,28)*{} **\dir{-};
(40,28)*{}; (48,28)*{} **\dir{-};
(-56,20)*{}; (-40,20)*{} **\dir{-};
(-24,20)*{}; (-16,20)*{} **\dir{-};
(-8,20)*{}; (8,20)*{} **\dir{-};
(24,20)*{}; (32,20)*{} **\dir{-};
(48,20)*{}; (56,20)*{} **\dir{-};
(-48,16)*{}; (-40,16)*{} **\dir{-};
(-32,16)*{}; (-16,16)*{} **\dir{-};
(-8,16)*{}; (8,16)*{} **\dir{-};
(24,16)*{}; (32,16)*{} **\dir{-};
(48,16)*{}; (56,16)*{} **\dir{-};
(-32,8)*{}; (-24,8)*{} **\dir{-};
(40,8)*{}; (48,8)*{} **\dir{-};
(-48,4)*{}; (-40,4)*{} **\dir{-};
(-32,4)*{}; (-16,4)*{} **\dir{-};
(-8,4)*{}; (8,4)*{} **\dir{-};
(-8,4)*{}; (8,4)*{} **\dir{-};
(24,4)*{}; (32,4)*{} **\dir{-};
(48,4)*{}; (56,4)*{} **\dir{-};
(-56,-4)*{}; (-40,-4)*{} **\dir{-};
(-24,-4)*{}; (-16,-4)*{} **\dir{-};
(-8,-4)*{}; (8,-4)*{} **\dir{-};
(24,-4)*{}; (32,-4)*{} **\dir{-};
(48,-4)*{}; (56,-4)*{} **\dir{-};
(-8,-20)*{}; (0,-20)*{} **\dir{-};
(-8,-28)*{}; (0,-28)*{} **\dir{-};
(16,-28)*{}; (24,-28)*{} **\dir{-};
(40,-20)*{}; (48,-20)*{} **\dir{-};
(-56,28)*{}; (-56,20)*{} **\dir{-};
(-48,28)*{}; (-48,20)*{} **\dir{-};
(-32,28)*{}; (-32,20)*{} **\dir{-};
(-24,28)*{}; (-24,20)*{} **\dir{-};
(0,28)*{}; (0,20)*{} **\dir{-};
(16,28)*{}; (16,20)*{} **\dir{-};
(24,28)*{}; (24,20)*{} **\dir{-};
(48,28)*{}; (48,20)*{} **\dir{-};
(-56,16)*{}; (-56,8)*{} **\dir{-};
(-48,16)*{}; (-48,8)*{} **\dir{-};
(-24,16)*{}; (-24,8)*{} **\dir{-};
(-32,16)*{}; (-32,8)*{} **\dir{-};
(0,16)*{}; (0,8)*{} **\dir{-};
(16,16)*{}; (16,8)*{} **\dir{-};
(24,16)*{}; (24,8)*{} **\dir{-};
(48,16)*{}; (48,8)*{} **\dir{-};
(-56,4)*{}; (-56,-4)*{} **\dir{-};
(-32,4)*{}; (-32,-4)*{} **\dir{-};
(16,4)*{}; (16,-4)*{} **\dir{-};
(-56,-8)*{}; (-56,-16)*{} **\dir{-};
(-48,-8)*{}; (-48,-16)*{} **\dir{-};
(-32,-8)*{}; (-32,-16)*{} **\dir{-};
(-24,-8)*{}; (-24,-16)*{} **\dir{-};
(0,-8)*{}; (0,-16)*{} **\dir{-};
(16,-8)*{}; (16,-16)*{} **\dir{-};
(24,-8)*{}; (24,-16)*{} **\dir{-};
(48,-8)*{}; (48,-16)*{} **\dir{-};
(-56,-20)*{}; (-56,-28)*{} **\dir{-};
(16,-20)*{}; (16,-28)*{} **\dir{-};
(40,-20)*{}; (40,-28)*{} **\dir{-};
(-48,-20)*{}; (-40,-28)*{} **\dir{-};
(-40,-20)*{}; (-48,-28)*{} **\dir{-};
(-32,-20)*{}; (-24,-28)*{} **\dir{-};
(-32,-28)*{}; (-24,-20)*{} **\dir{-};
(-24,-20)*{}; (-16,-28)*{} **\dir{-};
(-16,-20)*{}; (-24,-28)*{} **\dir{-};
(0,-20)*{}; (8,-28)*{} **\dir{-};
(8,-20)*{}; (0,-28)*{} **\dir{-};
(24,-20)*{}; (32,-28)*{} **\dir{-};
(32,-20)*{}; (24,-28)*{} **\dir{-};
(48,-20)*{}; (56,-28)*{} **\dir{-};
(56,-20)*{}; (48,-28)*{} **\dir{-};
(40,-4)*{}; (48,4)*{} **\dir{-};
(48,-4)*{}; (40,4)*{} **\dir{-};
(40,20)*{}; (56,28)*{} **\dir{-};
(40,16)*{}; (56,8)*{} **\dir{-};
(-32,28)*{}; (-16,28)*{} **\crv{(-24,24)};
(-8,28)*{}; (8,28)*{} **\crv{(0,24)};
(-56,8)*{}; (-40,8)*{} **\crv{(-48,12)};
(-8,8)*{}; (8,8)*{} **\crv{(0,12)};
(-56,-16)*{}; (-40,-16)*{} **\crv{(-48,-12)};
(-32,-8)*{}; (-16,-8)*{} **\crv{(-24,-12)};
(-8,-8)*{}; (8,-8)*{} **\crv{(0,-12)};
(-8,-16)*{}; (8,-16)*{} **\crv{(0,-12)};
(40,-8)*{}; (56,-16)*{} **\crv{(52,-12)};
(40,-16)*{}; (56,-8)*{} **\crv{(52,-12)};
(-36,24)*=0{+}="+";
(-12,24)*=0{-}="+";
(12,24)*=0{-}="+";
(36,24)*=0{-}="+";
(-59.5,12)*=0{+}="+";
(-36,12)*=0{+}="+";
(-12,12)*=0{-}="+";
(12,12)*=0{-}="+";
(36,12)*=0{-}="+";
(-59.5,0)*=0{-}="+";
(-36,0)*=0{-}="+";
(-12,0)*=0{+}="+";
(12,0)*=0{+}="+";
(36,0)*=0{+}="+";
(-59.5,-12)*=0{-}="+";
(-36,-12)*=0{-}="+";
(-12,-12)*=0{+}="+";
(12,-12)*=0{+}="+";
(36,-12)*=0{+}="+";
(-59.5,-24)*=0{+}="+";
(-36,-24)*=0{+}="+";
(-12,-24)*=0{-}="+";
(12,-24)*=0{-}="+";
(36,-24)*=0{-}="+";
\endxy
\end{align*}
\end{example} 
The relations given in the next proposition are fundamental for subsequent calculations. 
\begin{proposition}\label{prel:a}
For $i=1,2,\dots,$ the following statements hold: 
\begin{enumerate}[label=(\arabic{*}), ref=\arabic{*},leftmargin=0pt,itemindent=1.5em]
\item $\sigma_{i+1}p_{i+\half}=p_{i+\half}$,\label{prel:a0}
\item $s_{i+1}\sigma_{i+1}p_{i+\thalf}=p_{i+\half}s_{i+1}\sigma_{i+1}$,\label{prel:a1}
\item $\sigma_{i+1}p_ip_{i+\half}=s_iL_ip_{i+\half}$,\label{prel:a3}
\item $\sigma_{i+1}p_{i+1}p_{i+\half}=L_ip_{i+\half}$,\label{prel:a2}
\item $p_{i+\half}L_ip_{i+\half}=p_{i+\half}$.\label{jm:comm:a0}
\end{enumerate}
\end{proposition}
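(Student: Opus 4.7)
I plan to prove all five identities simultaneously by induction on $i$, since the recursive definition~\eqref{sigma-i-1} of $\sigma_{i+1}$ forces the lower instances of (1)--(5) to appear as one simplifies each of its five summands.

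For the base case $i=1$, substitute $\sigma_2=s_1$ and $L_1=p_1$. Identity (1) becomes $s_1p_{1+\half}=p_{1+\half}$, which is (ide-iii); (2) is obtained by rewriting $p_{2+\half}=s_1s_2p_{1+\half}s_2s_1$, an immediate consequence of (com-vii); (3) is immediate; (4) follows from $s_1p_2=p_1s_1$ (from (com-vi)) together with (ide-iii); and (5) is the instance $p_{1+\half}p_1p_{1+\half}=p_{1+\half}$ of (con-i).

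For the inductive step I expect (1) to carry most of the weight. Multiplying the five-term expansion~\eqref{sigma-i-1} of $\sigma_{i+1}$ on the right by $p_{i+\half}$, the leading summand $s_{i-1}s_i\sigma_is_is_{i-1}p_{i+\half}$ collapses as follows: the rearranged (com-vii) identity $s_is_{i-1}p_{i+\half}=p_{i-\half}s_is_{i-1}$ produces a factor of $p_{i-\half}$; the inductive instance $\sigma_ip_{i-\half}=p_{i-\half}$ of (1) absorbs $\sigma_i$; a second application of (com-vii) restores $p_{i+\half}$. The remaining four summands should group into two cancelling pairs after repeated use of (com-vii), (con-i), (con-ii), (ide-iii), and the inductive instance of (5). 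Identity (2) is equivalent (after conjugating by $s_{i+1}$ and applying (com-vii)) to the commutation $[s_i\sigma_{i+1},p_{i+\thalf}]=0$, which is verified by another expansion of $\sigma_{i+1}$ together with the inductive hypotheses. Identities (3) and (4) are linked via the identity $s_ip_ip_{i+\half}=p_{i+1}p_{i+\half}$ (combining (com-vi) and (ide-iii)): one of them follows directly by substituting~\eqref{sigma-i-1}, and the other is obtained by left-multiplication by $s_i$ together with this identity. Finally (5) is proved by substituting~\eqref{jm-i-1} into $p_{i+\half}L_ip_{i+\half}$; since $L_{i-1}\in\mathcal{A}_{i-1}$ commutes with $p_{i+\half}$, the inductive instance $p_{i-\half}L_{i-1}p_{i-\half}=p_{i-\half}$ of (5) combined with (con-i), (ide-iii) and (com-vii) collapses all five terms to $p_{i+\half}$.

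The principal obstacle is the bookkeeping in (1) and (5): in each case four summands must cancel in pairs, and every cancellation relies on the conjugation identity (com-vii), $s_ip_{i-\half}s_i=s_{i-1}p_{i+\half}s_{i-1}$, to mediate between $p_{i-\half}$ and $p_{i+\half}$. Once these cancellations are executed cleanly, the remaining identities follow with little additional effort.
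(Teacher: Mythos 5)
Your overall strategy --- a simultaneous induction on $i$, expanding the recursion~\eqref{sigma-i-1} and cancelling summands with the Halverson--Ram relations --- is exactly the paper's, and your treatment of the base case, of item (1) (where the four non-leading summands do reduce in pairs to $\pm p_{i-\half}L_{i-1}p_{i-\half}p_{i+\half}$) and of item (2) is sound. Two remarks on the remaining items. For item (5) the paper does not expand $L_i$ at all: it simply writes $p_{i+\half}L_ip_{i+\half}=p_{i+\half}s_iL_ip_{i+\half}=p_{i+\half}\sigma_{i+1}p_ip_{i+\half}=p_{i+\half}p_ip_{i+\half}=p_{i+\half}$, using (3), (1) and the contraction relation. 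Your direct expansion of~\eqref{jm-i-1} also works, but the summands $p_{i+\half}s_{i-1}L_{i-1}s_{i-1}p_{i+\half}$ and $p_{i+\half}\sigma_ip_{i+\half}$ need more than the inductive instance of (5) and the relations you list: the first needs $p_{i+\half}=s_{i-1}s_ip_{i-\half}s_is_{i-1}$ together with inductive (5), and the second needs the inductive instances of (1) and (2) to move $\sigma_i$ past $p_{i+\half}$.

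The genuine gap is in your proposed linkage of (3) and (4). Left-multiplying (3) by $s_i$ gives $s_i\sigma_{i+1}p_ip_{i+\half}=L_ip_{i+\half}$, and to convert the left side into $\sigma_{i+1}p_{i+1}p_{i+\half}=\sigma_{i+1}s_ip_ip_{i+\half}$ you must move $s_i$ past $\sigma_{i+1}$; the identity $s_ip_ip_{i+\half}=p_{i+1}p_{i+\half}$ does not help, because the $s_i$ it introduces sits on the wrong side of $\sigma_{i+1}$. The commutation $s_i\sigma_{i+1}=\sigma_{i+1}s_i$ is true, but it is Proposition~\ref{z-0} (equivalently $\sigma_{i+\half}s_i=s_i\sigma_{i+\half}=\sigma_{i+1}$), which the paper establishes only after, and by means of, the present proposition --- so invoking it here would be circular. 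The same obstruction reappears if you instead try to pass between (3) and (4) via the $*$-anti-involution. The paper avoids this by proving (3) and (4) each by a separate direct expansion of~\eqref{sigma-i-1}, with the proof of (4) additionally consuming items (2) and (3); you should plan to do the same.
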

\begin{proof}
(\ref{prel:a0}) We consider each of the terms on the right hand side of the definition 
\begin{multline*}
\sigma_{i+1}p_{i+\half}=s_{i-1}s_i\sigma_is_is_{i-1}p_{i+\half}+s_ip_{i-\half}L_{i-1}s_ip_{i-\half}s_ip_{i+\half}+p_{i-\half}L_{i-1}s_ip_{i-\half}p_{i+\half}\\
-s_ip_{i-\half}L_{i-1}s_{i-1}p_{i+\half}p_ip_{i-\half}p_{i+\half}-p_{i-\half}p_ip_{i+\half}s_{i-1}L_{i-1}p_{i-\half}s_ip_{i+\half},
\end{multline*}
beginning with the argument by induction:
\begin{align*}
s_{i-1}s_i\sigma_is_is_{i-1}p_{i+\half}=s_{i-1}s_i\sigma_ip_{i-\half}s_is_{i-1}=s_{i-1}s_ip_{i-\half}s_is_{i-1}=p_{i+\half}.
\end{align*}
From the fact that $p_{i+\half}$ commutes with $L_{i-1}$, $s_ip_{i-\half}L_{i-1}s_ip_{i-\half}s_ip_{i+\half}=p_{i-\half}L_{i-1}p_{i-\half}p_{i+\half}$ and $p_{i-\half}L_{i-1}s_ip_{i-\half}p_{i+\half}=p_{i-\half}L_{i-1}p_{i-\half}p_{i+\half}$, while
\begin{align*}
s_ip_{i-\half}L_{i-1}s_{i-1}p_{i+\half}p_ip_{i-\half}p_{i+\half}&=s_ip_{i-\half}L_{i-1}s_{i-1}p_{i+\half}p_{i-\half}=p_{i-\half}L_{i-1}p_{i-\half}p_{i+\half}
\end{align*}
and, from the relation $p_{i-\half}p_ip_{i+\half}s_{i-1}=p_{i-\half}p_is_{i-1}s_ip_{i-\half}s_{i}$, we obtain
\begin{align*}
p_{i-\half}p_ip_{i+\half}s_{i-1}L_{i-1}p_{i-\half}s_ip_{i+\half}&=p_{i-\half}p_is_{i-1}s_ip_{i-\half}s_{i}L_{i-1}p_{i-\half}s_ip_{i+\half}\\
&=p_{i-\half}s_{i-1}p_{i-1}s_ip_{i-\half}s_{i}L_{i-1}p_{i-\half}s_ip_{i+\half}\\
&=p_{i-\half}p_{i-1}s_ip_{i-\half}s_{i}L_{i-1}p_{i-\half}s_ip_{i+\half}\\
&=p_{i-\half}p_{i-1}p_{i-\half}L_{i-1}p_{i-\half}p_{i+\half}\\
&=p_{i-\half}L_{i-1}p_{i-\half}p_{i+\half}.
\end{align*}
Substituting the terms obtained above into the definition of $\sigma_{i+1}p_{i+\half}$, we observe that all terms vanish except for $p_{i+\half}$, which completes the proof of~(\ref{prel:a0}). \newline 
(\ref{prel:a1}) The definition~\eqref{sigma-i-1} gives
\begin{multline*}
s_{i+1}\sigma_{i+1}p_{i+\thalf}=s_{i+1}s_{i-1}s_i\sigma_is_is_{i-1}p_{i+\thalf}+s_{i+1}s_ip_{i-\half}L_{i-1}s_ip_{i-\half}s_ip_{i+\thalf}\\+s_{i+1}p_{i-\half}L_{i-1}s_ip_{i-\half}p_{i+\thalf}-s_{i+1}s_ip_{i-\half}L_{i-1}s_{i-1}p_{i+\half}p_ip_{i-\half}p_{i+\thalf}\\-s_{i+1}p_{i-\half}p_ip_{i+\half}s_{i-1}L_{i-1}p_{i-\half}s_ip_{i+\thalf}.
\end{multline*}
Now consider each of the terms on the right hand side of the above equality. Firstly,
\begin{align*}
s_{i+1}s_{i-1}s_i\sigma_is_is_{i-1}p_{i+\thalf}&=s_{i+1}s_{i-1}s_is_{i+1}\sigma_is_{i+1}s_is_{i-1}p_{i+\thalf}\\
&=s_{i-1}s_{i+1}s_is_{i+1}\sigma_is_{i+1}s_ip_{i+\thalf}s_{i-1}\\
&=s_{i-1}s_{i}s_{i+1}s_{i}\sigma_ip_{i+\half}s_{i+1}s_is_{i-1}\\
&=s_{i-1}s_{i}s_{i+1}p_{i-\half}s_{i}\sigma_is_{i+1}s_is_{i-1}&&\text{(by induction)}\\
&=s_{i-1}s_{i}p_{i-\half}s_{i+1}s_{i}\sigma_is_{i+1}s_is_{i-1}\\
&=p_{i+\half}s_{i-1}s_{i}s_{i+1}s_{i}\sigma_is_{i+1}s_is_{i-1}\\
&=p_{i+\half}s_{i-1}s_{i+1}s_{i}s_{i+1}\sigma_is_{i+1}s_is_{i-1}\\
&=p_{i+\half}s_{i+1}s_{i-1}s_i\sigma_is_is_{i-1}.
\end{align*}
Next, use the fact that $s_ip_{i+\thalf}s_i$ and $p_{i-\half}$ commute to observe that
\begin{align*}
s_{i+1}s_ip_{i-\half}L_{i-1}s_ip_{i-\half}s_ip_{i+\thalf}&=s_{i+1}s_ip_{i-\half}L_{i-1}s_ip_{i-\half}(s_ip_{i+\thalf}s_i)s_i\\
&=s_{i+1}s_ip_{i-\half}L_{i-1}s_i(s_ip_{i+\thalf}s_i)p_{i-\half}s_i\\
&=s_{i+1}s_ip_{i-\half}L_{i-1}p_{i+\thalf}s_ip_{i-\half}s_i\\
&=s_{i+1}s_ip_{i+\thalf}p_{i-\half}L_{i-1}s_ip_{i-\half}s_i\\
&=p_{i+\half}s_{i+1}s_ip_{i-\half}L_{i-1}s_ip_{i-\half}s_i,
\end{align*}
and that
\begin{align*}
s_{i+1}p_{i-\half}L_{i-1}s_ip_{i-\half}p_{i+\thalf}&=s_{i+1}p_{i-\half}L_{i-1}s_ip_{i+\thalf}p_{i-\half}\\
&=s_{i+1}p_{i-\half}L_{i-1}(s_ip_{i+\thalf}s_i)s_ip_{i-\half}\\
&=s_{i+1}p_{i-\half}(s_ip_{i+\thalf}s_i)L_{i-1}s_ip_{i-\half}\\
&=s_{i+1}(s_ip_{i+\thalf}s_i)p_{i-\half}L_{i-1}s_ip_{i-\half}\\
&=p_{i+\half}s_{i+1}s_is_ip_{i-\half}L_{i-1}s_ip_{i-\half}\\
&=p_{i+\half}s_{i+1}p_{i-\half}L_{i-1}s_ip_{i-\half}.
\end{align*}
Since $p_{i+\thalf}$ commutes with $\mathcal{A}_{i+\half}(z)$, we see that 
\begin{align*}
s_{i+1}s_ip_{i-\half}L_{i-1}s_{i-1}p_{i+\half}p_ip_{i-\half}p_{i+\thalf}&=s_{i+1}s_ip_{i+\thalf} p_{i-\half}L_{i-1}s_{i-1}p_{i+\half}p_ip_{i-\half}\\
&=p_{i+\half} s_{i+1}s_ip_{i-\half}L_{i-1}s_{i-1}p_{i+\half}p_ip_{i-\half}.
\end{align*}
For the last term, we use the fact that $(s_ip_{i+\thalf}s_i)$ and $p_{i-\half}$ commute to see that
\begin{align*}
s_{i+1}p_{i-\half}p_ip_{i+\half}s_{i-1}L_{i-1}p_{i-\half}s_ip_{i+\thalf}&=s_{i+1}p_{i-\half}p_ip_{i+\half}s_{i-1}L_{i-1}p_{i-\half}(s_ip_{i+\thalf}s_i)s_i\\
&=s_{i+1}p_{i-\half}p_ip_{i+\half}s_{i-1}L_{i-1}(s_ip_{i+\thalf}s_i)p_{i-\half}s_i\\
&=s_{i+1}p_{i-\half}p_ip_{i+\half}s_{i-1}(s_ip_{i+\thalf}s_i)L_{i-1}p_{i-\half}s_i\\
&=s_{i+1}p_{i-\half}p_ip_{i+\half}(s_{i-1}s_ip_{i+\thalf}s_is_{i-1})s_{i-1}L_{i-1}p_{i-\half}s_i\\
&=s_{i+1}p_{i-\half}(s_{i-1}s_ip_{i-1}p_{i-\half}p_{i+\thalf}s_is_{i-1})s_{i-1}L_{i-1}p_{i-\half}s_i\\
&=s_{i+1}p_{i-\half}(s_{i-1}s_ip_{i+\thalf}s_is_{i-1})p_ip_{i+\half}s_{i-1}L_{i-1}p_{i-\half}s_i\\
&=s_{i+1}p_{i-\half}s_ip_{i+\thalf}s_is_{i-1}p_ip_{i+\half}s_{i-1}L_{i-1}p_{i-\half}s_i\\
&=p_{i-\half}s_{i+1}s_ip_{i+\thalf}s_is_{i-1}p_ip_{i+\half}s_{i-1}L_{i-1}p_{i-\half}s_i\\
&=p_{i-\half}p_{i+\half}s_{i+1}s_i^2s_{i-1}p_ip_{i+\half}s_{i-1}L_{i-1}p_{i-\half}s_i\\
&=p_{i+\half}s_{i+1}p_{i-\half}s_{i-1}p_ip_{i+\half}s_{i-1}L_{i-1}p_{i-\half}s_i.
\end{align*}
Putting the above together, we have
\begin{multline*}
s_{i+1}\sigma_{i+1}p_{i+\thalf}=p_{i+\half}s_{i+1}s_{i-1}s_i\sigma_is_is_{i-1}
+p_{i+\half}s_{i+1}s_ip_{i-\half}L_{i-1}s_ip_{i-\half}s_i\\
+p_{i+\half}s_{i+1}p_{i-\half}L_{i-1}s_ip_{i-\half}
-p_{i+\half} s_{i+1}s_ip_{i-\half}L_{i-1}s_{i-1}p_{i+\half}p_ip_{i-\half}\\
-p_{i+\half}s_{i+1}p_{i-\half}s_{i-1}p_ip_{i+\half}s_{i-1}L_{i-1}p_{i-\half}s_i
=p_{i+\half}s_{i+1}\sigma_{i+1},
\end{multline*}
which completes the proof of~(\ref{prel:a1}). \newline
(\ref{prel:a3}) We consider the terms on the right hand side of the equality
\begin{multline*}
\sigma_{i+1}p_{i}p_{i+\half}=
s_{i-1}s_i\sigma_is_is_{i-1}p_{i}p_{i+\half}
+s_ip_{i-\half}L_{i-1}s_ip_{i-\half}s_ip_{i}p_{i+\half}\\
+p_{i-\half}L_{i-1}s_ip_{i-\half}p_{i}p_{i+\half}
-s_ip_{i-\half}L_{i-1}s_{i-1}p_{i+\half}p_ip_{i-\half}p_{i}p_{i+\half}\\
-p_{i-\half}p_ip_{i+\half}s_{i-1}L_{i-1}p_{i-\half}s_ip_{i}p_{i+\half},
\end{multline*}
beginning with
\begin{align*}
s_{i-1}s_i\sigma_is_is_{i-1}p_{i}p_{i+\half}=&s_{i-1}s_i\sigma_ip_{i-1}s_is_{i-1}p_{i+\half}\\
&=s_{i-1}s_i\sigma_ip_{i-1}p_{i-\half}s_is_{i-1}\\
&=s_{i-1}s_is_{i-1}L_{i-1}p_{i-\half} s_is_{i-1}&&(\text{by induction})\\
&= s_{i}s_{i-1}s_{i}L_{i-1} s_is_{i-1}p_{i+\half}\\
&= s_{i}s_{i-1}L_{i-1} s_{i-1}p_{i+\half}.
\end{align*}
For the second term, we have 
\begin{align*}
s_ip_{i-\half}L_{i-1}s_ip_{i-\half}s_ip_ip_{i+\half}&=s_ip_{i-\half}L_{i-1}s_ip_{i-\half}p_{i+1}s_ip_{i+\half}\\
&=s_ip_{i-\half}L_{i-1}p_is_ip_{i-\half}s_ip_{i+\half}\\
&=s_ip_{i-\half}L_{i-1}p_ip_{i-\half}p_{i+\half},
\end{align*}
and for the third,
\begin{align*}
p_{i-\half}L_{i-1}s_ip_{i-\half}p_{i}p_{i+\half}&= p_{i-\half}s_iL_{i-1}p_{i-\half}p_{i}p_{i+\half}\\
&=p_{i-\half}s_{i}\sigma_ip_ip_{i-\half}p_ip_{i+\half} &&(\text{by induction})\\
&=p_{i-\half}s_{i}\sigma_ip_{i+\half}\\
&=s_{i}\sigma_ip_{i+\half} &&(\text{by item~\eqref{prel:a1}}).
\end{align*}
Using the relation $p_{i}p_{i-\half}p_i=p_i$, we see that the fourth term satisfies
\begin{align*}
s_ip_{i-\half}L_{i-1}s_{i-1}p_{i+\half}p_ip_{i-\half}p_ip_{i+\half}&=s_ip_{i-\half}L_{i-1}s_{i-1}p_{i+\half},
\end{align*}
while, for the fifth term, 
\begin{align*}
p_{i-\half}p_ip_{i+\half}s_{i-1}L_{i-1}p_{i-\half}s_ip_ip_{i+\half}&=p_{i-\half}p_ip_{i+\half}s_{i-1}L_{i-1}p_{i-\half}p_{i+1}s_ip_{i+\half}\\
&=p_{i-\half}p_ip_{i+\half}p_{i+1}s_{i-1}L_{i-1}p_{i-\half}s_ip_{i+\half}\\
&=p_{i-\half}s_i p_{i+1}s_{i-1}L_{i-1}p_{i-\half}p_{i+\half}\\
&=p_{i-\half}s_i s_{i-1}L_{i-1}p_{i-\half}p_{i+1}p_{i+\half}\\
&=s_i s_{i-1}p_{i+\half}L_{i-1}p_{i-\half}p_{i+1}p_{i+\half}\\
&=s_i s_{i-1}L_{i-1}p_{i-\half}p_{i+\half}.
\end{align*}
Combining the above terms, we obtain
\begin{multline*}
\sigma_{i+1}p_ip_{i+\half}=s_{i}s_{i-1}L_{i-1} s_{i-1}p_{i+\half}+s_ip_{i-\half}L_{i-1}p_ip_{i-\half}p_{i+\half} +s_i\sigma_ip_{i+\half}\\
-s_ip_{i-\half}L_{i-1}s_{i-1}p_{i+\half}-s_i s_{i-1}L_{i-1}p_{i-\half}p_{i+\half}=s_iL_ip_{i+\half},
\end{multline*}
which completes the proof of~(\ref{prel:a3}).\newline
(\ref{prel:a2}) We consider each of the terms on the right hand side of the expression
\begin{multline*}
\sigma_{i+1}p_{i+1}p_{i+\half}=s_{i-1}s_i\sigma_is_is_{i-1}p_{i+1}p_{i+\half}+s_ip_{i-\half}L_{i-1}s_ip_{i-\half}s_ip_{i+1}p_{i+\half}\\
+p_{i-\half}L_{i-1}s_ip_{i-\half}p_{i+1}p_{i+\half}-s_ip_{i-\half}L_{i-1}s_{i-1}p_{i+\half}p_ip_{i-\half}p_{i+1}p_{i+\half}\\
-p_{i-\half}p_ip_{i+\half}s_{i-1}L_{i-1}p_{i-\half}s_ip_{i+1}p_{i+\half}.
\end{multline*}
Firstly,
\begin{align*}
s_{i-1}s_i\sigma_is_is_{i-1}p_{i+1}p_{i+\half}&=s_{i-1}s_i\sigma_ip_is_is_{i-1}p_{i+\half}\\
&=s_{i-1}s_i\sigma_ip_ip_{i-\half}s_is_{i-1}\\
&=s_{i-1}s_iL_{i-1}p_{i-\half}s_is_{i-1}&&\text{(by induction)}\\
&=s_{i-1}s_iL_{i-1}s_is_{i-1}p_{i+\half}\\
&=s_{i-1}L_{i-1}s_{i-1}p_{i+\half}.
\end{align*}
For the second term, 
\begin{align*}
s_ip_{i-\half}s_iL_{i-1}p_{i-\half}s_ip_{i+1}p_{i+\half}&=s_ip_{i-\half}s_i\sigma_ip_ip_{i-\half}s_ip_{i+1}p_{i+\half}&&\text{(by induction)}\\
&=s_is_i\sigma_ip_{i+\half}p_ip_{i-\half}s_ip_{i+1}p_{i+\half}&&\text{(by item~(\ref{prel:a1}))}\\
&=\sigma_ip_{i+\half}p_ip_{i-\half}p_is_ip_{i+\half}\\
&=\sigma_ip_{i+\half}p_is_ip_{i+\half}\\
&=\sigma_ip_{i+\half}.
\end{align*}
For the third term,
\begin{align*}
p_{i-\half}L_{i-1}s_ip_{i-\half}p_{i+1}p_{i+\half}&=p_{i-\half}L_{i-1}p_{i}s_ip_{i-\half}p_{i+\half}\\
&=p_{i-\half}L_{i-1}p_{i}p_{i-\half}p_{i+\half}.
\end{align*}
For the fourth term, we use the relation $s_{i-1}p_{i+\half}p_ip_{i-\half}p_{i+1}p_{i+\half}=s_ip_{i-\half}p_{i-1}p_ip_{i-\half}p_{i+\half}$ in
\begin{align*}
s_ip_{i-\half}L_{i-1}s_{i-1}p_{i+\half}p_ip_{i-\half}p_{i+1}p_{i+\half}&=
s_ip_{i-\half}L_{i-1}s_ip_{i-\half}p_{i-1}p_ip_{i-\half}p_{i+\half}\\
&=s_ip_{i-\half}s_iL_{i-1}p_{i-\half}p_{i-1}p_ip_{i-\half}p_{i+\half}\\
&=s_ip_{i-\half}s_i\sigma_ip_ip_{i-\half}p_{i-1}p_ip_{i-\half}p_{i+\half}&&\text{(by induction)}\\
&=s_ip_{i-\half}s_i\sigma_ip_{i-1}p_ip_{i-\half}p_{i+\half}\\
&=s_is_i\sigma_ip_{i+\half}p_{i-1}p_ip_{i-\half}p_{i+\half}&&\text{(by item~(\ref{prel:a1}))}\\
&=\sigma_ip_{i-1}p_{i-\half}p_{i+\half}\\
&=s_{i-1}L_{i-1}p_{i-\half}p_{i+\half}&&\text{(by item (\ref{prel:a3})).}
\end{align*}
For the final term, we use the relation $p_{i-\half}p_ip_{i+\half}s_{i-1}=p_{i-\half}s_ip_{i-1}p_{i-\half}s_i$ in
\begin{align*}
p_{i-\half}p_ip_{i+\half}s_{i-1}L_{i-1}p_{i-\half}s_ip_{i+1}p_{i+\half}&=
p_{i-\half}s_ip_{i-1}p_{i-\half}s_iL_{i-1}p_{i-\half}s_ip_{i+1}p_{i+\half}\\
&=p_{i-\half}s_ip_{i-1}p_{i-\half}s_i\sigma_i p_ip_{i-\half}s_ip_{i+1}p_{i+\half}&&\text{(by induction)}\\
&=p_{i-\half}s_ip_{i-1}p_{i-\half}s_i\sigma_i p_ip_{i-\half}p_is_ip_{i+\half}\\
&=p_{i-\half}s_ip_{i-1}p_{i-\half}s_i\sigma_i p_ip_{i+\half}\\
&=p_{i-\half}s_ip_{i-1}s_i\sigma_ip_{i+\half} p_ip_{i+\half}&&\text{(by item~(\ref{prel:a1}))}\\
&=p_{i-\half}s_ip_{i-1}s_i\sigma_ip_{i+\half}\\
&=p_{i-\half}p_{i-1}\sigma_ip_{i+\half}\\
&=p_{i-\half}L_{i-1}s_{i-1}p_{i+\half}&&\text{(by item~(\ref{prel:a3})).}
\end{align*}
Putting the above together,  
\begin{multline*}
\sigma_{i+1}p_{i+1}p_{i+\half}=s_{i-1}L_{i-1}s_{i-1}p_{i+\half}+\sigma_ip_{i+\half}
+p_{i-\half}L_{i-1}p_ip_{i-\half}p_{i+\half}\\
-s_{i-1}L_{i-1}p_{i-\half}p_{i+\half}-p_{i-\half}L_{i-1}s_{i-1}p_{i+\half}=L_ip_{i+\half},
\end{multline*}
which proves~\eqref{prel:a2}.\newline
\eqref{jm:comm:a0} Parts~\eqref{prel:a0} and~\eqref{prel:a3} give
\begin{align*}
p_{i+\half}L_ip_{i+\half}
=p_{i+\half}s_iL_ip_{i+\half}
=p_{i+\half}\sigma_{i+1}p_{i}p_{i+\half}
=p_{i+\half}p_{i}p_{i+\half}=p_{i+\half},
\end{align*}
as required.  
\end{proof}
\begin{proposition}\label{invariant}
If $i=1,2,\ldots,$ then 
\begin{enumerate}[label=(\arabic{*}), ref=\arabic{*},leftmargin=0pt,itemindent=1.5em]
\item $(\sigma_{i+\half})^*=\sigma_{i+\half}$,\label{invol-1}
\item $(L_{i+\half})^*=L_{i+\half}$.\label{invol-2}
\end{enumerate}
\end{proposition}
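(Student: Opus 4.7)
The plan is to prove both parts simultaneously by induction on $i$. The base case $i=1$ is immediate: $\sigma_{\half}=1$ and $\sigma_{1+\half}=1$ are $*$-invariant, and (\ref{jm-i-2}) gives $L_{1+\half}=1-p_1p_{1+\half}-p_{1+\half}p_1+p_{1+\half}p_1p_1p_{1+\half}$, whose $*$-image matches itself (the first two summands swap and the third is fixed since $p_1^*=p_1$ and $p_{1+\half}^*=p_{1+\half}$).

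For the inductive step I would first establish (\ref{invol-1}) and then deduce (\ref{invol-2}) from it. To prove $(\sigma_{i+\half})^*=\sigma_{i+\half}$, apply $*$ to the formula (\ref{sigma-i-3}) and use the inductive hypothesis $(\sigma_{i-\half})^*=\sigma_{i-\half}$ together with $(L_{i-1})^*=L_{i-1}$ (already observed). The first summand is manifestly fixed by $*$, and the second and third summands $p_{i-\half}L_{i-1}s_{i-1}p_{i+\half}s_{i-1}$ and $s_{i-1}p_{i+\half}s_{i-1}L_{i-1}p_{i-\half}$ are directly interchanged by $*$. The substantive work is to show that the two remaining negative terms
\[
-p_{i-\half}L_{i-1}s_{i-1}p_{i+\half}p_ip_{i-\half}\qquad\text{and}\qquad-s_{i-1}p_{i+\half}p_{i}p_{i-\half}L_{i-1}s_{i-1}p_{i+\half}s_{i-1}
\]
together constitute a $*$-invariant subexpression. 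This verification, which is the main obstacle, will be carried out using the identity $s_ip_{i-\half}s_i=s_{i-1}p_{i+\half}s_{i-1}$ (com-vii), the commutation $L_{i-1}s_i=s_iL_{i-1}$ (since $L_{i-1}\in\mathcal{A}_{i-1}(z)$), the contraction $p_ip_{i-\half}p_i=p_i$, and the identity $p_{i-\half}L_{i-1}=p_{i-\half}p_i\sigma_i$ obtained by applying $*$ to Proposition~\ref{prel:a}(\ref{prel:a2}) at the preceding index; the style of manipulation should closely resemble the long chains of relations already seen in the proof of Proposition~\ref{prel:a}.

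Once (\ref{invol-1}) is in hand, (\ref{invol-2}) follows by applying $*$ to (\ref{jm-i-2}): the pair $-L_ip_{i+\half}$ and $-p_{i+\half}L_i$ swap under $*$, the term $s_iL_{i-\half}s_i$ is fixed by induction on $L_{i-\half}$, and $\sigma_{i+\half}$ is fixed by part (\ref{invol-1}). The only nontrivial check is that $p_{i+\half}L_ip_ip_{i+\half}$ is $*$-invariant, which reduces to the equality $p_{i+\half}L_ip_ip_{i+\half}=p_{i+\half}p_iL_ip_{i+\half}$. Applying $*$ to Proposition~\ref{prel:a}(\ref{prel:a2}) yields $p_{i+\half}L_i=p_{i+\half}p_{i+1}\sigma_{i+1}$, after which Proposition~\ref{prel:a}(\ref{prel:a3}), followed by $p_{i+1}s_i=s_ip_i$ and $p_{i+\half}s_i=p_{i+\half}$, gives
\[
p_{i+\half}L_ip_ip_{i+\half}=p_{i+\half}p_{i+1}\sigma_{i+1}p_ip_{i+\half}=p_{i+\half}p_{i+1}s_iL_ip_{i+\half}=p_{i+\half}s_ip_iL_ip_{i+\half}=p_{i+\half}p_iL_ip_{i+\half},
\]
which is exactly the $*$-image of the starting expression, completing the induction.
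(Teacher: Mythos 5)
Your proposal follows essentially the same route as the paper: part (\ref{invol-2}) is verbatim the paper's argument (reduce to the $*$--invariance of $p_{i+\half}L_ip_ip_{i+\half}$ and compute $p_{i+\half}L_ip_ip_{i+\half}=p_{i+\half}p_{i+1}\sigma_{i+1}p_ip_{i+\half}=\cdots=p_{i+\half}p_{i}L_ip_{i+\half}$ via Proposition~\ref{prel:a}), and for part (\ref{invol-1}) you isolate the same key obstacle, the $*$--invariance of the negative summands. The one step you leave as a sketch is exactly what the paper actually proves, namely the identity $p_{i-\half}L_{i-1}s_{i-1}p_{i+\half}p_ip_{i-\half}=p_{i-\half}p_ip_{i+\half}s_{i-1}L_{i-1}p_{i-\half}$, which it obtains in a five--step chain from parts (\ref{prel:a1}), (\ref{prel:a3}) and (\ref{prel:a2}) of Proposition~\ref{prel:a} at index $i-1$, starting from precisely the substitution $p_{i-\half}L_{i-1}=p_{i-\half}p_i\sigma_i$ that you propose.
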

\begin{proof}
\eqref{invol-1} We show that the summand $p_{i-\half}L_{i-1}s_{i-1}p_{i+\half}p_ip_{i-\half}$ in~\eqref{sigma-i-2} is fixed under the $*$ anti-involution on $\mathcal{A}_{i+\half}(z)$, using Proposition~\ref{prel:a}, as follows: 
\begin{align*}
&p_{i-\half}L_{i-1}s_{i-1}p_{i+\half}p_ip_{i-\half}
=p_{i-\half}p_i\sigma_is_{i-1}p_{i+\half}p_ip_{i-\half}
=p_{i-\half}p_i\sigma_is_ip_{i-\half}s_is_{i-1}p_ip_{i-\half}\\
&\quad=p_{i-\half}p_ip_{i+\half}\sigma_is_{i-1}p_{i}p_{i-\half}
=p_{i-\half}p_ip_{i+\half}\sigma_ip_{i-1}p_{i-\half}
=p_{i-\half}p_ip_{i+\half}s_{i-1}L_{i-1}p_{i-\half}.
\end{align*}
\eqref{invol-2} Given that $(\sigma_{i+\half})^*=\sigma_{i+\half}$, it suffices to show that the summand $p_{i+\half}L_ip_ip_{i+\half}$ in~\eqref{jm-i-2} is fixed under the $*$ anti-involution on $\mathcal{A}_{i+\half}(z)$. Using Proposition~\ref{prel:a}, 
\begin{align*}
p_{i+\half}L_ip_ip_{i+\half}=p_{i+\half}p_{i+1}\sigma_{i+1}p_ip_{i+\half}=p_{i+\half}p_{i+1}s_iL_ip_{i+\half}=p_{i+\half}s_ip_{i}L_ip_{i+\half}=p_{i+\half}p_{i}L_ip_{i+\half},
\end{align*}
gives the required result. 
\end{proof}
\begin{proposition}\label{z-0}
If $i=1,2,\ldots,$ then $\sigma_{i+\half}s_i=s_i\sigma_{i+\half}=\sigma_{i+1}$.
\end{proposition}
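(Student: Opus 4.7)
\emph{Proof plan.} I proceed by induction on $i$. For the base case $i=1$, the initial values $\sigma_{1+\half}=1$ and $\sigma_2=s_1$ give $s_1\sigma_{1+\half}=s_1=\sigma_{1+\half}s_1=\sigma_2$ at once. For the inductive step, fix $i\ge 2$ and assume that $s_{i-1}\sigma_{i-\half}=\sigma_{i-\half}s_{i-1}=\sigma_i$ has already been established.

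The first goal is $s_i\sigma_{i+\half}=\sigma_{i+1}$, which I plan to obtain by multiplying the defining expression~\eqref{sigma-i-2} on the left by $s_i$ and matching the result summand-by-summand with~\eqref{sigma-i-1}. Direct inspection of the two formulae shows that the last four summands differ only by the placement of a single factor of $s_i$: the second and fourth summands of $\sigma_{i+1}$ carry a leftmost $s_i$ that is absent from the corresponding summands of $\sigma_{i+\half}$, while the third and fifth summands of $\sigma_{i+\half}$ carry a leftmost $s_i$ that is absent from the corresponding summands of $\sigma_{i+1}$. Left-multiplication by $s_i$ together with $s_i^2=1$ therefore sends each of these four summands of $\sigma_{i+\half}$ directly to the corresponding summand of $\sigma_{i+1}$ with no further input. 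For the first summand, the braid relation and the inductive hypothesis give
\begin{align*}
s_i\cdot s_{i-1}s_i\sigma_{i-\half}s_is_{i-1}
= s_{i-1}s_is_{i-1}\sigma_{i-\half}s_is_{i-1}
= s_{i-1}s_i\sigma_is_is_{i-1},
\end{align*}
which is precisely the first summand of $\sigma_{i+1}$ in~\eqref{sigma-i-1}.

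The companion identity $\sigma_{i+\half}s_i=\sigma_{i+1}$ I plan to deduce cheaply by applying the $*$-anti-involution to the equation $s_i\sigma_{i+\half}=\sigma_{i+1}$. Since $s_i^*=s_i$, and since Proposition~\ref{invariant}(\ref{invol-1}) combined with the observation preceding it yields $\sigma_{i+\half}^*=\sigma_{i+\half}$ and $\sigma_{i+1}^*=\sigma_{i+1}$, this gives $\sigma_{i+\half}s_i=(s_i\sigma_{i+\half})^*=\sigma_{i+1}^*=\sigma_{i+1}$.

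The main obstacle is the bookkeeping in aligning the five summands of~\eqref{sigma-i-2} with those of~\eqref{sigma-i-1} and noticing the single-$s_i$ discrepancy in the last four; once this pattern is spotted, the proof reduces to one braid manipulation in the first summand, the inductive hypothesis, and the appeal to $*$-symmetry for the right-multiplication version.
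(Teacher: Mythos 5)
Your proof is correct and follows essentially the same route as the paper: induction on $i$ with a summand-by-summand comparison of the recursions~\eqref{sigma-i-1} and~\eqref{sigma-i-2}, the only input beyond $s_i^2=1$ and the braid relation being the inductive hypothesis for the first summand. Your use of the $*$-anti-involution to get $\sigma_{i+\half}s_i=\sigma_{i+1}$ from $s_i\sigma_{i+\half}=\sigma_{i+1}$ is legitimate and non-circular, since Proposition~\ref{invariant}(\ref{invol-1}) is established beforehand; it is in substance the same appeal the paper makes, as that proposition's proof is exactly the identity $p_{i-\half}p_ip_{i+\half}s_{i-1}L_{i-1}p_{i-\half}=p_{i-\half}L_{i-1}s_{i-1}p_{i+\half}p_ip_{i-\half}$ that the paper cites here.
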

\begin{proof}
After checking the case $i=1$, the statement follows from induction and the equality 
\begin{align*}
p_{i-\half}p_ip_{i+\half}s_{i-1}L_{i-1}p_{i-\half}=p_{i-\half}L_{i-1}s_{i-1}p_{i+\half}p_ip_{i-\half},
\end{align*}
which was established in the proof of Proposition~\ref{invariant}.
\end{proof}
The following observation is made for later reference.
\begin{lemma}\label{f-1-0}
If $i=1,2,\ldots,$ then 
\begin{align*}
s_{i}s_{i+1}\sigma_{i+1}s_{i+1}s_{i}p_{i+\half}
=s_{i+1}p_{i+\half}L_is_ip_{i+\thalf}p_{i+1}p_{i+\half}=\sigma_{i+\half}s_{i+1}p_{i+\half}.
\end{align*}
\end{lemma}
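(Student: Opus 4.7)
The plan is to prove the three-term equality by reducing each of the three expressions to the common form $s_ip_{i+\thalf}\sigma_{i+1}s_{i+1}$. Throughout I will use Propositions~\ref{prel:a} and~\ref{z-0}, the conjugation identity $s_{i+1}p_{i+\half}s_{i+1}=s_ip_{i+\thalf}s_i$ coming from relation~\eqref{com-vii}, and the fact that $p_{i+\thalf}$ and $p_{i+1}$ each commute with every generator of $\mathcal{A}_{i+\half}$ (immediate from relations~\eqref{com-iii}--\eqref{com-v} together with the disjointness of supports), and hence with $\sigma_{i+\half}\in\mathcal{A}_{i+\half}$.

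For the outer equality $s_is_{i+1}\sigma_{i+1}s_{i+1}s_ip_{i+\half}=\sigma_{i+\half}s_{i+1}p_{i+\half}$, I would absorb $s_ip_{i+\half}=p_{i+\half}$ via~\eqref{ide-iii}, then use $s_{i+1}p_{i+\half}=s_ip_{i+\thalf}s_is_{i+1}$ extracted from~\eqref{com-vii}, and apply Proposition~\ref{z-0} to rewrite $\sigma_{i+1}s_i=\sigma_{i+\half}$. Commuting $\sigma_{i+\half}$ past $p_{i+\thalf}$ and absorbing the leading $s_{i+1}$ into $p_{i+\thalf}$ via~\eqref{ide-iii} reduces the left-hand side to $s_ip_{i+\thalf}\sigma_{i+1}s_{i+1}$; an analogous sequence of moves applied to $\sigma_{i+\half}s_{i+1}p_{i+\half}$ yields the identical expression.

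For the middle expression $B=s_{i+1}p_{i+\half}L_is_ip_{i+\thalf}p_{i+1}p_{i+\half}$, more care is needed. I would first rewrite the tail $s_ip_{i+\thalf}p_{i+1}p_{i+\half}$ as $s_{i+1}p_{i+\half}p_is_{i+1}p_{i+\half}$ using the conjugation identity together with $s_ip_{i+1}=p_is_i$ (from~\eqref{com-vi}), $s_ip_{i+\half}=p_{i+\half}$, and the commutation of $s_{i+1}$ with $p_i$. Since $L_i\in\mathcal{A}_i$ commutes with $s_{i+1}$, a second application of~\eqref{com-vii} converts the inner $s_{i+1}p_{i+\half}s_{i+1}$ back to $s_ip_{i+\thalf}s_i$, after which Proposition~\ref{prel:a}(\ref{prel:a3}) gives $s_iL_ip_{i+\half}=\sigma_{i+1}p_ip_{i+\half}$. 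The contraction $p_ip_{i+\half}p_i=p_i$ from~\eqref{con-ii} then collapses the redundant middle factor, bringing $B$ into the shape $s_ip_{i+\thalf}\sigma_{i+1}p_is_{i+1}p_{i+\half}$.

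The key step to close the argument is the identity $\sigma_{i+1}p_i=p_{i+1}\sigma_{i+1}$, which I will establish by writing $\sigma_{i+1}=\sigma_{i+\half}s_i$, using $s_ip_i=p_{i+1}s_i$ from~\eqref{com-vi}, and commuting $p_{i+1}$ past $\sigma_{i+\half}$. Substituting gives $B=s_ip_{i+\thalf}p_{i+1}\sigma_{i+1}s_{i+1}p_{i+\half}$. Rewriting $\sigma_{i+1}s_{i+1}p_{i+\half}=\sigma_{i+\half}p_{i+\thalf}s_is_{i+1}$ just as in the first step, then commuting $p_{i+1}$ and $\sigma_{i+\half}$ past $p_{i+\thalf}$, the contraction $p_{i+\thalf}p_{i+1}p_{i+\thalf}=p_{i+\thalf}$ from~\eqref{con-i} absorbs the spurious $p_{i+1}$ and delivers $B=s_ip_{i+\thalf}\sigma_{i+1}s_{i+1}$. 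The main obstacle is that the natural manipulations of $\sigma_{i+1}$ against $p_{i+\thalf}$ tend to cycle through equivalent forms without net simplification; the essential trick that breaks the loop is to deliberately introduce an auxiliary $p_{i+1}$ via $\sigma_{i+1}p_i=p_{i+1}\sigma_{i+1}$ so that the contraction~\eqref{con-i} can subsequently absorb it.
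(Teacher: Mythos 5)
Your reduction of the two outer expressions to the common form $s_ip_{i+\thalf}\sigma_{i+1}s_{i+1}$ is correct, and so is the first half of your treatment of the middle term $B$: the tail rewrite via $s_{i+1}p_{i+\half}s_{i+1}=s_ip_{i+\thalf}s_i$, the commutation of $L_i$ with $s_{i+1}$, Proposition~\ref{prel:a}(\ref{prel:a3}), and the contraction $p_ip_{i+\half}p_i=p_i$ legitimately bring $B$ to $s_ip_{i+\thalf}\sigma_{i+1}p_is_{i+1}p_{i+\half}$. The gap is precisely the step you flag as essential: the identity $\sigma_{i+1}p_i=p_{i+1}\sigma_{i+1}$ is \emph{false} for $i\ge2$. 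Your derivation of it rests on the claim that $p_{i+1}$ commutes with every generator of $\mathcal{A}_{i+\half}(z)$, hence with $\sigma_{i+\half}$; but $p_{i+1}$ does not commute with $p_{i+\half}\in\mathcal{A}_{i+\half}(z)$, and it does not commute with $\sigma_{i+\half}$ either --- Theorem~\ref{s-1} only gives that $\sigma_{i+\half}$ commutes with $\mathcal{A}_{i-1}(z)$, and the commutation relation $\sigma_{i+\half}p_j=p_j\sigma_{i+\half}$ in Theorem~\ref{n-pres} is asserted only for $j\ne i,i+1$; what holds in place of commutation is the conjugation relation $\sigma_{i+\half}p_i\sigma_{i+\half}=\sigma_{i+1}p_{i+1}\sigma_{i+1}$. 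One can see the failure concretely on tensor space: $\sigma_3p_2(v_1\otimes v_2\otimes v_3)=\sum_j s_{j,3}v_1\otimes v_3\otimes v_j$, whose $j=1$ term is $v_3\otimes v_3\otimes v_1$, whereas $p_3\sigma_3(v_1\otimes v_2\otimes v_3)=\sum_j v_1\otimes v_3\otimes v_j$; since $A_3(n)\to\End_{\mathfrak{S}_n}(V^{\otimes 3})$ is injective for $n\ge6$, the two elements differ in $\mathcal{A}_3(z)$.

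The repair is short and does not need any new identity. From $B=s_ip_{i+\thalf}\sigma_{i+1}p_is_{i+1}p_{i+\half}$, substitute $s_{i+1}p_{i+\half}=s_ip_{i+\thalf}s_is_{i+1}$ and use $p_is_i=s_ip_{i+1}$ together with Proposition~\ref{z-0} to get
\begin{align*}
B=s_ip_{i+\thalf}\sigma_{i+1}s_ip_{i+1}p_{i+\thalf}s_is_{i+1}=s_ip_{i+\thalf}\sigma_{i+\half}p_{i+1}p_{i+\thalf}s_is_{i+1};
\end{align*}
now commute $\sigma_{i+\half}$ past the \emph{outer} $p_{i+\thalf}$ (legitimate, since $p_{i+\thalf}$ commutes with $\mathcal{A}_{i+\half}(z)$), contract $p_{i+\thalf}p_{i+1}p_{i+\thalf}=p_{i+\thalf}$, and move $p_{i+\thalf}$ back to obtain $s_ip_{i+\thalf}\sigma_{i+\half}s_is_{i+1}=s_ip_{i+\thalf}\sigma_{i+1}s_{i+1}$, as required. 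This corrected route is essentially the paper's: there the auxiliary $p_{i+1}$ is introduced at the outset via $p_{i+\half}L_i=p_{i+\half}p_{i+1}\sigma_{i+1}$ and then eliminated by the same contraction, rather than manufactured by a (nonexistent) commutation of $\sigma_{i+1}$ with $p_i$.
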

\begin{proof}
On the one hand,
\begin{align*}
s_is_{i+1}\sigma_{i+1}s_{i+1}s_ip_{i+\half}&=s_is_{i+1}\sigma_{i+1}s_{i+1}p_{i+\half}=s_is_{i+1}p_{i+\thalf}\sigma_{i+1}s_{i+1}\\
&=s_ip_{i+\thalf}\sigma_{i+1}s_{i+1}=s_i\sigma_{i+1}s_{i+1}p_{i+\half}=\sigma_{i+\half}s_{i+1}p_{i+\half},
\end{align*}
and on the other,
\begin{align*}
&s_{i+1}p_{i+\half}L_is_ip_{i+\thalf}p_{i+1}p_{i+\half}=
s_{i+1}p_{i+\half}p_{i+1}\sigma_{i+1}s_ip_{i+\thalf}p_{i+1}p_{i+\half}\\
&=s_{i+1}p_{i+\half}p_{i+1}\sigma_{i+\half}p_{i+\thalf}p_{i+1}p_{i+\half}
=s_{i+1}p_{i+\half}p_{i+1}p_{i+\thalf}\sigma_{i+\half}p_{i+1}p_{i+\half}\\
&=s_{i}p_{i+\thalf}s_is_{i+1}p_{i+1}p_{i+\thalf}\sigma_{i+\half}p_{i+1}p_{i+\half}
=s_{i}p_{i+\thalf}s_ip_{i+2}p_{i+\thalf}\sigma_{i+\half}p_{i+1}p_{i+\half}\\
&=s_{i}p_{i+\thalf}s_i\sigma_{i+\half}p_{i+2}p_{i+\thalf}p_{i+1}p_{i+\half}
=s_{i}p_{i+\thalf}\sigma_{i+1}p_{i+2}p_{i+\thalf}p_{i+1}p_{i+\half}\\
&=s_{i}p_{i+\thalf}\sigma_{i+1}s_{i+1}p_{i+1}p_{i+\half}
=s_{i}\sigma_{i+1}s_{i+1}p_{i+\half}p_{i+1}p_{i+\half}\\
&=s_{i}\sigma_{i+1}s_{i+1}p_{i+\half}=\sigma_{i+\half}s_{i+1}p_{i+\half},
\end{align*}
as required. 
\end{proof}
We are now in a position to prove the first commutativity result of this paper.
\begin{theorem}\label{a-b}
The elements $\sigma_{i+1}$ and $\sigma_{i+\half}$ satisfy the following commutativity relations:
\begin{enumerate}[label=(\arabic{*}), ref=\arabic{*},leftmargin=0pt,itemindent=1.5em]
\item $\sigma_{i+1}p_{i-\half}=p_{i-\half}\sigma_{i+1}=p_{i-\half}L_{i-1}s_ip_{i-\half}$ for $i=2,3,\ldots.$\label{a-b-1}
\item $\sigma_{i+1}p_{i-1}=p_{i-1}\sigma_{i+1}=s_{i-1}\sigma_ip_{i+1}s_is_{i-1}$ for $i=2,3,\ldots.$\label{a-b-2}
\item $\sigma_{i+1}p_{i-\thalf}=p_{i-\thalf}\sigma_{i+1}$ for $i=3,4,\ldots.$\label{a-b-3}
\item $\sigma_{i+1}s_{i-2}=s_{i-2}\sigma_{i+1}$ for $i=3,4,\ldots.$\label{a-b-4}
\item $\sigma_{i+1}p_{i-2}=p_{i-2}\sigma_{i+1}$ for $i=3,4,\ldots.$\label{a-b-5}
\item $\sigma_{i+\half}p_{i-1}=p_{i-1}\sigma_{i+\half}$ for $i=2,3,\ldots.$\label{a-b-6}
\item $\sigma_{i+\half}p_{i-\thalf}=p_{i-\thalf}\sigma_{i+\half}$ for $i=3,4,\ldots.$\label{a-b-7}
\item $\sigma_{i+\half}s_{i-2}=s_{i-2}\sigma_{i+\half}$ for $i=3,4,\ldots.$\label{a-b-8}
\item $\sigma_{i+\half}p_{i-2}=p_{i-2}\sigma_{i+\half}$ for $i=3,4,\ldots.$\label{a-b-9}
\end{enumerate}
\end{theorem}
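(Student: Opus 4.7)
The plan is to prove all nine parts by a single simultaneous induction on $i$, after verifying the small base cases ($i=2$ and, where needed, $i=3$) by direct expansion of the recursive formulas \eqref{sigma-i-1} and \eqref{sigma-i-3}. The key engine throughout is Proposition~\ref{prel:a}: the identities $\sigma_{i+1}p_{i+\half}=p_{i+\half}$, $\sigma_{i+1}p_ip_{i+\half}=s_iL_ip_{i+\half}$, $\sigma_{i+1}p_{i+1}p_{i+\half}=L_ip_{i+\half}$, and $p_{i+\half}L_ip_{i+\half}=p_{i+\half}$ repeatedly collapse products of $\sigma$'s with the nearest idempotents into Jucys--Murphy elements. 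Proposition~\ref{z-0} lets me interchange $\sigma_{i+1}$ and $\sigma_{i+\half}$ via multiplication by $s_i$, and the $*$-invariance from Proposition~\ref{invariant} should halve the work, since a right-multiplication identity implies the corresponding left-multiplication identity after applying $*$.

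For parts~(\ref{a-b-1}) and~(\ref{a-b-2}), I would multiply the five-term expansion \eqref{sigma-i-1} on the right by $p_{i-\half}$ or $p_{i-1}$ respectively and simplify each summand. The calculation is closely parallel to the proof of Proposition~\ref{prel:a}(\ref{prel:a0}): most summands collapse to a common term via the idempotent/contraction relations of Theorem~\ref{hr-presentation}, the fact that $p_{i+\half}$ commutes with $\mathcal{A}_{i-\half}(z)$, and the inductive hypothesis applied to $\sigma_i p_{i-\thalf}$, $\sigma_i p_{i-2}$, and $\sigma_ip_{i-1}$. The closed forms $p_{i-\half}L_{i-1}s_ip_{i-\half}$ and $s_{i-1}\sigma_ip_{i+1}s_is_{i-1}$ then emerge, and the equalities $\sigma_{i+1}p_{i-\half}=p_{i-\half}\sigma_{i+1}$ and $\sigma_{i+1}p_{i-1}=p_{i-1}\sigma_{i+1}$ either follow by applying $*$ (using Proposition~\ref{invariant}) or are extracted by recognising both sides as $*$-fixed.

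For the pure commutativity statements~(\ref{a-b-3})--(\ref{a-b-5}) and~(\ref{a-b-7})--(\ref{a-b-9}), I would again expand $\sigma_{i+1}$ (respectively $\sigma_{i+\half}$) via \eqref{sigma-i-1} or \eqref{sigma-i-3} and commute the element $p_{i-\thalf}$, $s_{i-2}$, or $p_{i-2}$ past each summand using the Coxeter, commutation, and idempotent relations of Theorem~\ref{hr-presentation}, together with the inductive commutation hypothesis applied to $\sigma_i$ and to the Jucys--Murphy elements $L_{i-1}$ (which lie in $\mathcal{A}_{i-1}(z)$ and, by induction, already commute with the required distant generators). For~(\ref{a-b-6})--(\ref{a-b-9}), an efficient alternative is to use Proposition~\ref{z-0}: writing $\sigma_{i+\half}=s_i\sigma_{i+1}=\sigma_{i+1}s_i$ and commuting $s_i$ with each of $p_{i-1}$, $p_{i-\thalf}$, $s_{i-2}$, $p_{i-2}$ (all of which commute with $s_i$ by Theorem~\ref{hr-presentation}) reduces the $\sigma_{i+\half}$-identities to the already-established $\sigma_{i+1}$-identities.

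The principal obstacle is the proliferation of terms: for each of the nine statements, the five-term expansion of $\sigma_{i+1}$ (or $\sigma_{i+\half}$) produces five separate commutations to verify, and several require subtle rewrites along the lines of $s_{i-1}p_{i+\half}p_ip_{i-\half}=p_{i-\half}p_ip_{i+\half}s_{i-1}s_i\cdots$ combined with the inductive hypotheses on $\sigma_i$ and $L_{i-1}$. The most delicate summand is $p_{i-\half}p_ip_{i+\half}s_{i-1}L_{i-1}p_{i-\half}s_i$ (together with the parallel summand in~\eqref{sigma-i-2}), whose commutation with $p_{i-\half}$ and $p_{i-\thalf}$ must be handled via the reformulation~\eqref{sigma-i-3} and the rewriting identity established inside the proof of Proposition~\ref{invariant}. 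The bookkeeping, rather than any single novel idea, is the crux, and organising the nine induction steps in the order (\ref{a-b-1}), (\ref{a-b-2}), (\ref{a-b-5}), (\ref{a-b-4}), (\ref{a-b-3}), then (\ref{a-b-6})--(\ref{a-b-9}) via Proposition~\ref{z-0} keeps each step's dependencies explicit.
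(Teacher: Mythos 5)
Your overall strategy --- expand \eqref{sigma-i-1} term by term, drive the simplification with Proposition~\ref{prel:a}, use $*$-invariance to upgrade one-sided identities to two-sided ones, and reduce the $\sigma_{i+\half}$ statements (\ref{a-b-6})--(\ref{a-b-9}) to the $\sigma_{i+1}$ statements via $\sigma_{i+\half}=s_i\sigma_{i+1}$ --- is exactly the paper's, and your treatment of (\ref{a-b-1}), (\ref{a-b-2}) and (\ref{a-b-6})--(\ref{a-b-9}) would go through essentially as described. The gap is in your mechanism for (\ref{a-b-3}) and (\ref{a-b-4}). You propose to commute $p_{i-\thalf}$ (resp.\ $s_{i-2}$) past each of the five summands of \eqref{sigma-i-1}, relying on the inductive fact that $\sigma_i$ and $L_{i-1}$ ``already commute with the required distant generators.'' But they do not: the inductive instances of this theorem and of Theorem~\ref{s-1} give that $\sigma_i$ commutes with $s_{i-3}$ and that $L_{i-1}$ commutes with $\mathcal{A}_{i-\thalf}(z)$, and neither element commutes with $s_{i-2}$; likewise the summand $s_{i-1}s_i\sigma_is_is_{i-1}$ contains $s_{i-1}$, which does not commute with $p_{i-\thalf}$. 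So term-by-term commutation fails outright for at least two of the five summands in each of (\ref{a-b-3}) and (\ref{a-b-4}).

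The paper's resolution is structurally different at exactly this point: it first substitutes the recursions for $L_{i-1}$ and for $\sigma_i$ one level deeper into \eqref{sigma-i-1}, producing an expansion with roughly ten summands, and then shows that conjugation by $s_{i-2}$ \emph{permutes} this collection --- pairing distinct summands with one another rather than fixing each individually --- while for (\ref{a-b-3}) the two surviving negative terms are shown to be interchanged by $*$ rather than each being $*$-symmetric. This pairing mechanism is the missing idea in your plan. Two smaller points: your proposed order (\ref{a-b-1}), (\ref{a-b-2}), (\ref{a-b-5}), (\ref{a-b-4}), (\ref{a-b-3}) forfeits the paper's cheap derivation of (\ref{a-b-5}) from (\ref{a-b-2}) and (\ref{a-b-4}) via $p_{i-2}=s_{i-2}p_{i-1}s_{i-2}$ and would force a direct, much longer computation for (\ref{a-b-5}); and the cancellation of the leading term in the expansion of $\sigma_{i+1}p_{i-\half}$ in (\ref{a-b-1}) requires Lemma~\ref{f-1-0}, which your sketch does not invoke.
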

\begin{proof}
\eqref{a-b-1} Using Lemma~\ref{f-1-0}, 
\begin{align*}
\sigma_{i+1}p_{i-\half}
&=-s_ip_{i-\half}L_{i-1}s_{i-1}p_{i+\half}p_ip_{i-\half}
-p_{i-\half}p_ip_{i+\half}s_{i-1}L_{i-1}p_{i-\half}s_ip_{i-\half}\\
&\quad+p_{i-\half}L_{i-1}s_ip_{i-\half}+s_{i-1}s_i\sigma_is_is_{i-1}p_{i-\half}
+s_ip_{i-\half}L_{i-1}s_ip_{i-\half}s_ip_{i-\half}\\
&=-p_{i-\half}p_ip_{i+\half}s_{i-1}L_{i-1}p_{i-\half}s_ip_{i-\half}+
p_{i-\half}L_{i-1}s_ip_{i-\half}
+s_ip_{i-\half}L_{i-1}s_ip_{i-\half}s_ip_{i-\half}\\
&=-p_{i-\half}p_ip_{i+\half}s_{i-1}L_{i-1}p_{i-\half}p_{i+\half}+
p_{i-\half}L_{i-1}s_ip_{i-\half}
+s_ip_{i-\half}L_{i-1}s_ip_{i-\half}p_{i+\half}\\
&=-p_{i-\half}p_ip_{i+\half}s_{i-1}p_{i+\half}L_{i-1}p_{i-\half}+
p_{i-\half}L_{i-1}s_ip_{i-\half}
+s_ip_{i-\half}L_{i-1}s_ip_{i-\half}p_{i+\half}\\
&=-p_{i-\half}p_ip_{i-\half}p_{i+\half}L_{i-1}p_{i-\half}+
p_{i-\half}L_{i-1}s_ip_{i-\half}
+s_ip_{i-\half}L_{i-1}p_{i-\half}p_{i+\half}\\
&=-p_{i-\half}p_{i+\half}L_{i-1}p_{i-\half}+
p_{i-\half}L_{i-1}s_ip_{i-\half}
+p_{i-\half}L_{i-1}p_{i-\half}p_{i+\half}\\
&=p_{i-\half}L_{i-1}s_ip_{i-\half}=p_{i-\half}s_iL_{i-1}p_{i-\half}=p_{i-\half}\sigma_{i+1},
\end{align*}
as required. \newline
\eqref{a-b-2} We first show that 
\begin{enumerate}[label=(\roman{*}), ref=\roman{*},leftmargin=0pt,itemindent=1.5em]
\item $s_ip_{i-\half}L_{i-1}s_{i-1}p_{i+\half}p_ip_{i-\half}p_{i-1}
=s_ip_{i-\half}L_{i-1}s_ip_{i-\half}s_ip_{i-1}$,\label{h-1-i}
\item $p_{i-\half}p_ip_{i+\half}s_{i-1}L_{i-1}p_{i-\half}s_ip_{i-1}
=p_{i-\half}L_{i-1}s_ip_{i-\half}p_{i-1}$,\label{h-1-ii}
\item $s_{i-1}s_i\sigma_is_is_{i-1}p_{i-1}
=s_{i-1}s_i\sigma_ip_{i+1}s_is_{i-1}
=p_{i-1}s_{i-1}s_i\sigma_is_is_{i-1}$. \label{h-1-iii}
\end{enumerate}
The left hand side of~\eqref{h-1-i} gives
\begin{align*}
s_ip_{i-\half}L_{i-1}s_{i-1}p_{i+\half}p_ip_{i-\half}p_{i-1}
&=s_ip_{i-\half}L_{i-1}s_{i-1}p_{i+\half}s_{i-1}p_{i-1}=s_ip_{i-\half}L_{i-1}s_{i}p_{i-\half}s_{i}p_{i-1},
\end{align*}
which is the right hand side of~\eqref{h-1-i}. Using the relation $p_{i+\half}s_{i-1}s_i=s_{i-1}s_ip_{i-\half}$, the left hand side of~\eqref{h-1-ii} gives 
\begin{align*}
p_{i-\half}p_ip_{i+\half}s_{i-1}L_{i-1}p_{i-\half}s_ip_{i-1}
&=p_{i-\half}p_ip_{i+\half}\sigma_{i-\half}p_{i}p_{i-\half}p_{i-1}s_i\\
&=p_{i-\half}p_ip_{i+\half}\sigma_{i-\half}p_{i}s_{i-1}s_i\\
&=p_{i-\half}p_i\sigma_{i-\half}p_{i+\half}s_{i-1}s_ip_{i-1}\\
&=p_{i-\half}p_i\sigma_{i-\half}s_{i-1}s_ip_{i-\half}p_{i-1}\\
&=p_{i-\half}p_i\sigma_{i}s_ip_{i-\half}p_{i-1}\\
&=p_{i-\half}L_{i-1}s_ip_{i-\half}p_{i-1},
\end{align*}
as required. The item~\eqref{h-1-iii} follows from the relation $s_{i-1}s_ip_{i+1}=p_{i-1}s_{i-1}s_i$.  Next, using the definition~\eqref{sigma-i-1}, 
\begin{align*}
\sigma_{i+1}p_{i-1}&=s_{i-1}s_i\sigma_is_is_{i-1}p_{i-1}
+s_ip_{i-\half}L_{i-1}s_ip_{i-\half}s_ip_{i-1}
+p_{i-\half}L_{i-1}s_ip_{i-\half}p_{i-1}\\
&\quad-s_ip_{i-\half}L_{i-1}s_{i-1}p_{i+\half}p_ip_{i-\half}p_{i-1}
-p_{i-\half}p_ip_{i+\half}s_{i-1}L_{i-1}p_{i-\half}s_ip_{i-1}\\
&=s_{i-1}s_i\sigma_is_is_{i-1}p_{i-1}.
\end{align*}
Since the right hand side of the last expression is manifestly fixed under the $*$ anti-involution on $\mathcal{A}_{i+1}(z)$, the proof of~\eqref{a-b-2} is complete.
\newline
\eqref{a-b-3} We first show that
\begin{enumerate}[label=(\roman{*}), ref=\roman{*},leftmargin=0pt,itemindent=1.5em]
\item $s_ip_{i-\half}p_{i-\thalf}L_{i-2}s_{i-2}s_{i-1}p_{i+\half}p_ip_{i-\half}p_{i-\thalf}
=s_ip_{i-\half}s_{i-2}L_{i-2}s_{i-2}s_{i-1}p_{i+\half}p_ip_{i-\half}p_{i-\thalf}$,\label{k-1-i}
\item $s_ip_{i-\half}s_{i-2}L_{i-2}p_{i-\thalf}s_{i-1}p_{i+\half}p_ip_{i-\half}p_{i-\thalf}=
s_{i}p_{i-\half}p_{i-\thalf}L_{i-2}p_{i-1}p_{i-\thalf}s_{i-1}p_{i+\half}p_ip_{i-\half}p_{i-\thalf}$,\label{k-1-ii}
\item $s_ip_{i-\half}\sigma_{i-1}s_{i-1}p_{i+\half}p_ip_{i-\half}p_{i-\thalf}
=s_{i-1}s_ip_{i-\thalf}L_{i-2}s_{i-1}p_{i-\thalf}s_is_{i-1}p_{i-\thalf}$,\label{k-1-iii}
\item $p_{i-\half}p_ip_{i+\half}s_{i-1}p_{i-\thalf}L_{i-2}s_{i-2}p_{i-\half}s_ip_{i-\thalf}
=p_{i-\half}p_ip_{i+\half}s_{i-1}\sigma_{i-1}p_{i-\half}s_ip_{i-\thalf}$,\label{k-1-iv}
\item $p_{i-\half}p_ip_{i+\half}s_{i-1}s_{i-2}L_{i-2}p_{i-\thalf}p_{i-\half}s_ip_{i-\thalf}
=p_{i-\half}p_{i}p_{i+\half}s_{i-1}s_{i-2}L_{i-2}s_{i-2}p_{i-\half}s_{i}p_{i-\thalf}$,\label{k-1-v}
\item $p_{i-\half}p_ip_{i+\half}s_{i-1}p_{i-\thalf}L_{i-2}p_{i-1}p_{i-\thalf}p_{i-\half}s_ip_{i-\thalf}
=s_{i-1}s_is_{i-1}p_{i-\thalf}L_{i-2}s_{i-1}p_{i-\thalf}s_{i-1}s_is_{i-1}p_{i-\thalf}$,\label{k-1-vi}
\item $p_{i-\half}s_{i-2}L_{i-2}p_{i-\thalf}s_ip_{i-\half}p_{i-\thalf}
=p_{i-\half}s_{i-2}L_{i-2}s_{i-2}s_ip_{i-\half}p_{i-\thalf}$,\label{k-1-vii}
\item $p_{i-\half}p_{i-\thalf}L_{i-2}s_{i-2}s_ip_{i-\half}p_{i-\thalf}
=p_{i-\half}\sigma_{i-1}s_ip_{i-\half}p_{i-\thalf}$, \label{k-1-viii}
\item $s_ip_{i-\half}s_{i-2}L_{i-2}s_{i-2}s_ip_{i-\half}s_ip_{i-\thalf}
=s_ip_{i-\half}s_{i-2}L_{i-2}p_{i-\thalf}s_ip_{i-\half}s_ip_{i-\thalf}$,\label{k-1-ix}
\item $s_ip_{i-\half}\sigma_{i-1}s_ip_{i-\half}s_ip_{i-\thalf}
=s_ip_{i-\half}p_{i-\thalf}L_{i-2}s_{i-2}s_ip_{i-\half}s_ip_{i-\thalf}$.\label{k-1-x}
\item \label{k-1-xi} $s_{i-1}s_is_{i-1}p_{i-\thalf}L_{i-2}s_{i-2}p_{i-\half}p_{i-1}p_{i-\thalf} s_is_{i-1}p_{i-\thalf}= s_{i-1}\sigma_{i-\thalf}s_{i-1}s_ip_{i-\thalf}p_{i-\half}$.
\item \label{k-1-xii} $s_{i-1}s_ip_{i-\thalf}p_{i-1}p_{i-\half}s_{i-2}L_{i-2}p_{i-\thalf} s_{i-1}s_{i}s_{i-1}p_{i-\thalf}=p_{i-\thalf}p_{i-\half}s_is_{i-1}\sigma_{i-\thalf}s_{i-1}$.
\end{enumerate}
For the left hand side of~\eqref{k-1-i},
\begin{align*}
&s_ip_{i-\half}p_{i-\thalf}L_{i-2}s_{i-2}s_{i-1}p_{i+\half}p_ip_{i-\half}p_{i-\thalf}
=s_ip_{i-\half}p_{i-\thalf}L_{i-2}s_{i-2}s_{i-1}p_{i-\thalf}p_{i+\half}p_ip_{i-\half}\\
&\quad=s_ip_{i-\half}p_{i-\thalf}L_{i-2}p_{i-\half}s_{i-2}s_{i-1}p_{i+\half}p_ip_{i-\half}
=s_ip_{i-\half}p_{i-\thalf}L_{i-2}s_{i-2}s_{i-1}p_{i+\half}p_ip_{i-\half},
\end{align*}
and for the right hand side of~\eqref{k-1-i},
\begin{align*}
&s_ip_{i-\half}s_{i-2}L_{i-2}s_{i-2}s_{i-1}p_{i+\half}p_ip_{i-\half}p_{i-\thalf}=
s_ip_{i-\half}s_{i-2}L_{i-2}s_{i-2}s_{i-1}p_{i-\thalf}p_{i+\half}p_ip_{i-\half}\\
&\quad=s_ip_{i-\half}s_{i-2}L_{i-2}p_{i-\half}s_{i-2}s_{i-1}p_{i+\half}p_ip_{i-\half}
=s_ip_{i-\half}s_{i-2}p_{i-\half}L_{i-2}s_{i-2}s_{i-1}p_{i+\half}p_ip_{i-\half}\\
&\quad=s_ip_{i-\half}p_{i-\thalf}L_{i-2}s_{i-2}s_{i-1}p_{i+\half}p_ip_{i-\half}.
\end{align*}
For the left hand side of~\eqref{k-1-ii}, 
\begin{align*}
&s_ip_{i-\half}s_{i-2}L_{i-2}p_{i-\thalf}s_{i-1}p_{i+\half}p_ip_{i-\half}p_{i-\thalf}=
s_ip_{i-\half}s_{i-2}L_{i-2}p_{i-\thalf}s_{i-1}p_{i-\thalf}p_{i+\half}p_ip_{i-\half}\\
&\quad=s_ip_{i-\half}s_{i-2}L_{i-2}p_{i-\half}p_{i-\thalf}p_{i+\half}p_ip_{i-\half}
=s_ip_{i-\half}s_{i-2}p_{i-\half}L_{i-2}p_{i-\thalf}p_{i+\half}p_ip_{i-\half}\\
&\quad=s_ip_{i-\half}p_{i-\thalf}L_{i-2}p_{i-\thalf}p_{i+\half}p_ip_{i-\half}
=s_ip_{i-\half}p_{i-\thalf}p_{i+\half}p_ip_{i-\half}=p_{i-\thalf}p_{i-\half}p_{i+\half},
\end{align*}
and for the right hand side of~\eqref{k-1-ii},
\begin{align*}
&s_{i}p_{i-\half}p_{i-\thalf}L_{i-2}p_{i-1}p_{i-\thalf}s_{i-1}p_{i+\half}p_ip_{i-\half}p_{i-\thalf}=
s_{i}p_{i-\half}p_{i-\thalf}L_{i-2}p_{i-1}p_{i-\thalf}s_{i-1}p_{i-\thalf}p_{i+\half}p_ip_{i-\half}\\
&=s_{i}p_{i-\half}p_{i-\thalf}L_{i-2}p_{i-1}p_{i-\thalf}p_{i-\half}p_{i+\half}p_ip_{i-\half}=
s_{i}p_{i-\half}p_{i-\thalf}L_{i-2}p_{i-1}p_{i-\thalf}p_{i+\half}p_{i-\half}p_ip_{i-\half}\\
&=s_{i}p_{i-\half}p_{i-\thalf}L_{i-2}p_{i-1}p_{i-\thalf}p_{i+\half}p_{i-\half}
=s_{i}p_{i-\thalf}L_{i-2}p_{i-\half}p_{i-1}p_{i-\half}p_{i-\thalf}p_{i+\half}\\
&=s_{i}p_{i-\thalf}L_{i-2}p_{i-\half}p_{i-\thalf}p_{i+\half}=p_{i-\thalf}p_{i-\half}p_{i+\half}.
\end{align*}
For the left hand side of~\eqref{k-1-iii}, 
\begin{align*}
&s_ip_{i-\half}\sigma_{i-1}s_{i-1}p_{i+\half}p_ip_{i-\half}p_{i-\thalf}=
s_i\sigma_{i-1}s_{i-1}p_{i-\thalf}p_{i+\half}p_ip_{i-\half}p_{i-\thalf}\\
&\quad=s_i\sigma_{i-1}s_{i-1}p_{i+\half}p_ip_{i-\half}p_{i-\thalf}
=\sigma_{i-1}s_is_{i-1}p_{i+\half}p_ip_{i-\half}p_{i-\thalf}\\
&\quad=\sigma_{i-1}p_{i-\half}s_is_{i-1}p_ip_{i-\half}p_{i-\thalf}
=s_{i-1}p_{i-\thalf}s_{i-1}\sigma_{i-1}s_ip_{i-1}p_{i-\half}p_{i-\thalf}\\
&\quad=s_{i-1}p_{i-\thalf}s_{i-1}\sigma_{i-1}p_{i-1}p_{i-\thalf}s_ip_{i-\half}
=s_{i-1}p_{i-\thalf}L_{i-2}s_{i-1}s_ip_{i-\thalf}p_{i-\half},
\end{align*}
and for the right hand side of~\eqref{k-1-iii}, 
\begin{align*}
&s_{i-1}s_ip_{i-\thalf}L_{i-2}s_{i-1}p_{i-\thalf}s_is_{i-1}p_{i-\thalf}
=s_{i-1}p_{i-\thalf}L_{i-2}s_is_{i-1}s_ip_{i-\thalf}s_{i-1}p_{i-\thalf}\\
&\quad=s_{i-1}p_{i-\thalf}L_{i-2}s_{i-1}s_ip_{i-\thalf}s_{i-1}p_{i-\thalf}
=s_{i-1}p_{i-\thalf}L_{i-2}s_{i-1}s_ip_{i-\thalf}p_{i-\half}.
\end{align*}
For the left hand side of~\eqref{k-1-iv}, 
\begin{align*}
&p_{i-\half}p_ip_{i+\half}s_{i-1}p_{i-\thalf}L_{i-2}s_{i-2}p_{i-\half}s_ip_{i-\thalf}
=p_{i-\half}p_ip_{i+\half}s_{i-1}p_{i-\thalf}L_{i-2}s_{i-2}p_{i-\thalf}p_{i-\half}s_i\\
&\quad=p_{i-\half}p_ip_{i+\half}s_{i-1}p_{i-\thalf}p_{i-\half}s_i=p_{i-\thalf}p_{i-\half}p_ip_{i+\half},
\end{align*}
and, for the right hand side of~\eqref{k-1-iv}, 
\begin{align*}
&p_{i-\half}p_ip_{i+\half}s_{i-1}\sigma_{i-1}p_{i-\half}s_ip_{i-\thalf}=
p_{i-\half}p_ip_{i+\half}s_{i-1}\sigma_{i-1}p_{i-\thalf}p_{i-\half}s_i\\
&=p_{i-\half}p_ip_{i+\half}s_{i-1}p_{i-\thalf}p_{i-\half}s_i=p_{i-\thalf}p_{i-\half}p_ip_{i+\half}.
\end{align*}
For the left hand side of~\eqref{k-1-v}, 
\begin{align*}
&p_{i-\half}p_ip_{i+\half}s_{i-1}s_{i-2}L_{i-2}p_{i-\thalf}p_{i-\half}s_ip_{i-\thalf}=
p_{i-\half}p_ip_{i+\half}s_{i-1}s_{i-2}L_{i-2}p_{i-\thalf}p_{i-\thalf}p_{i-\half}s_i, 
\end{align*}
and, for the right hand side of~\eqref{k-1-v}, 
\begin{align*}
p_{i-\half}p_{i}p_{i+\half}s_{i-1}s_{i-2}L_{i-2}s_{i-2}p_{i-\half}s_{i}p_{i-\thalf}&=
p_{i-\half}p_{i}p_{i+\half}s_{i-1}s_{i-2}L_{i-2}s_{i-2}p_{i-\thalf}p_{i-\half}s_{i}\\
&=p_{i-\half}p_{i}p_{i+\half}s_{i-1}s_{i-2}L_{i-2}p_{i-\thalf}p_{i-\half}s_{i}. 
\end{align*}
For the left hand side of~\eqref{k-1-vi}, 
\begin{align*}
p_{i-\half}p_ip_{i+\half}s_{i-1}p_{i-\thalf}L_{i-2}p_{i-1}p_{i-\thalf}p_{i-\half}s_ip_{i-\thalf}
&=p_{i-\half}p_ip_{i+\half}s_{i-1}p_{i-\thalf}L_{i-2}p_{i-1}p_{i-\thalf}p_{i-\half}s_i\\
&=p_{i-\half}p_is_{i-1}s_ip_{i-\half}s_ip_{i-\thalf}L_{i-2}p_{i-1}p_{i-\thalf}p_{i-\half}s_i\\
&=p_{i-\half}s_ip_{i-1}p_{i-\half}p_{i-\thalf}p_{i-1}s_iL_{i-2}p_{i-\thalf}p_{i-\half}s_i\\
&=p_{i-\half}s_is_{i-1}p_ip_{i-\thalf}s_{i-1}s_iL_{i-2}p_{i-\thalf}p_{i-\half}s_i\\
&=s_{i}s_{i-1}p_{i+\half}p_ip_{i+\half}p_{i-\thalf}s_{i-1}s_iL_{i-2}p_{i-\thalf}s_i\\
&=s_is_{i-1}p_{i+\half}p_{i-\thalf}s_{i-1}L_{i-2}p_{i-\thalf},
\end{align*}
and, for the right hand side of~\eqref{k-1-vi}, 
\begin{align*}
&s_{i-1}s_is_{i-1}p_{i-\thalf}L_{i-2}s_{i-1}p_{i-\thalf}s_{i-1}s_is_{i-1}p_{i-\thalf}
=s_{i-1}s_is_{i-1}p_{i-\thalf}L_{i-2}s_{i-1}p_{i-\thalf}s_{i}s_{i-1}s_{i}p_{i-\thalf}\\
&\quad=s_{i-1}s_is_{i-1}p_{i-\thalf}L_{i-2}s_{i-1}s_{i}p_{i-\thalf}s_{i-1}p_{i-\thalf}s_{i}
=s_{i-1}s_is_{i-1}p_{i-\thalf}L_{i-2}s_{i-1}s_{i}p_{i-\half}p_{i-\thalf}s_{i}\\
&\quad=s_{i-1}s_is_{i-1}p_{i+\half}p_{i-\thalf}L_{i-2}s_{i-1}s_{i}p_{i-\thalf}s_{i}
=s_is_{i-1}p_{i+\half}p_{i-\thalf}L_{i-2}s_{i-1}p_{i-\thalf}. 
\end{align*}
For the left hand side of~\eqref{k-1-vii}
\begin{align*}
p_{i-\half}s_{i-2}L_{i-2}p_{i-\thalf}s_ip_{i-\half}p_{i-\thalf}
=p_{i-\half}s_{i-2}L_{i-2}p_{i-\thalf}s_ip_{i-\half}
\end{align*}
and for the right hand side of~\eqref{k-1-vii}
\begin{align*}
p_{i-\half}s_{i-2}L_{i-2}s_{i-2}s_ip_{i-\half}p_{i-\thalf}
=p_{i-\half}s_{i-2}L_{i-2}s_{i-2}p_{i-\thalf}s_ip_{i-\half}
=p_{i-\half}s_{i-2}L_{i-2}p_{i-\thalf}s_ip_{i-\half}.
\end{align*}
For the left hand side of~\eqref{k-1-viii},
\begin{align*}
p_{i-\half}p_{i-\thalf}L_{i-2}s_{i-2}s_ip_{i-\half}p_{i-\thalf}
&=p_{i-\half}p_{i-\thalf}L_{i-2}s_{i-2}p_{i-\thalf}s_ip_{i-\half}\\
&=p_{i-\half}p_{i-\thalf}s_ip_{i-\half}=p_{i-\thalf}p_{i-\half}p_{i+\half},
\end{align*}
and for the right hand side of~\eqref{k-1-viii},
\begin{align*}
p_{i-\half}\sigma_{i-1}s_ip_{i-\half}p_{i-\thalf}=
p_{i-\half}\sigma_{i-1}p_{i-\thalf}s_ip_{i-\half}=
p_{i-\half}p_{i-\thalf}s_ip_{i-\half}=
p_{i-\thalf}p_{i-\half}p_{i+\half}. 
\end{align*}
For the left hand side of~\eqref{k-1-ix},
\begin{align*}
s_ip_{i-\half}s_{i-2}L_{i-2}s_{i-2}s_ip_{i-\half}s_ip_{i-\thalf}
&=s_ip_{i-\half}s_{i-2}L_{i-2}s_{i-2}p_{i-\thalf}s_ip_{i-\half}s_i\\
&=s_ip_{i-\half}s_{i-2}L_{i-2}p_{i-\thalf}s_ip_{i-\half}s_i
\end{align*}
and for the right hand side of~\eqref{k-1-ix},
\begin{align*}
&s_ip_{i-\half}s_{i-2}L_{i-2}p_{i-\thalf}s_ip_{i-\half}s_ip_{i-\thalf}
=s_ip_{i-\half}s_{i-2}L_{i-2}p_{i-\thalf}s_ip_{i-\half}s_i.
\end{align*}
For the left hand side of~\eqref{k-1-x},
\begin{align*}
s_ip_{i-\half}\sigma_{i-1}s_ip_{i-\half}s_ip_{i-\thalf}
=s_ip_{i-\half}\sigma_{i-1}p_{i-\thalf}s_ip_{i-\half}s_i
=p_{i-\thalf}p_{i-\half}p_{i+\half},
\end{align*}
and for the right hand side of~\eqref{k-1-x},
\begin{align*}
s_ip_{i-\half}p_{i-\thalf}L_{i-2}s_{i-2}s_ip_{i-\half}s_ip_{i-\thalf}&=
s_ip_{i-\half}p_{i-\thalf}L_{i-2}s_{i-2}p_{i-\thalf}s_ip_{i-\half}s_i\\
&= s_ip_{i-\half}p_{i-\thalf}s_ip_{i-\half}s_i=p_{i-\thalf}p_{i-\half}p_{i+\half}.
\end{align*}
Starting with the left hand side of~\eqref{k-1-xi},
\begin{align*}
&s_{i-1}s_is_{i-1}p_{i-\thalf}L_{i-2}s_{i-2}p_{i-\half}p_{i-1}p_{i-\thalf} s_is_{i-1}p_{i-\thalf}\\
&\qquad=s_{i-1}s_is_{i-1}p_{i-\thalf}L_{i-2}s_{i-1}p_{i-\thalf}s_{i-1}s_{i-2}p_{i-1}p_{i-\thalf} s_is_{i-1}p_{i-\thalf}\\
&\qquad=s_{i-1}s_is_{i-1}p_{i-\thalf}s_{i-1}L_{i-2}p_{i-\thalf}s_{i-1}p_{i-2}p_{i-\thalf} s_is_{i-1}p_{i-\thalf}\\
&\qquad=s_{i-1}s_is_{i-1}p_{i-\thalf}s_{i-1}\sigma_{i-1}p_{i-1}p_{i-\thalf}p_{i-2}s_{i-1}p_{i-\thalf} s_is_{i-1}p_{i-\thalf}\\
&\qquad=s_{i-1}s_is_{i-1}p_{i-\thalf}s_{i-1}\sigma_{i-1}p_{i-1}s_{i-2}s_{i-1}p_{i-\thalf} s_is_{i-1}p_{i-\thalf}\\
&\qquad=s_{i-1}s_is_{i-1}s_{i-1}\sigma_{i-1}p_{i-\half}p_{i-1}p_{i-\half}s_{i-2}s_{i-1} s_is_{i-1}p_{i-\thalf}\\
&\qquad=s_{i-1}s_i\sigma_{i-1}p_{i-\half}s_{i-2}s_{i-1} s_is_{i-1}p_{i-\thalf}\\
&\qquad=s_{i-1}s_{i}\sigma_{i-1}s_{i-2}s_{i-1}s_ip_{i-\thalf}s_{i-1}p_{i-\thalf}\\
&\qquad=s_{i-1}\sigma_{i-\thalf}s_{i-1}s_ip_{i-\thalf}p_{i-\half},
\end{align*}
as required. Considering the left hand side of~\eqref{k-1-xii}, 
\begin{align*}
&s_{i-1}s_ip_{i-\thalf}p_{i-1}p_{i-\half}s_{i-2}L_{i-2}p_{i-\thalf} s_{i-1}s_{i}s_{i-1}p_{i-\thalf}\\
&\qquad=s_{i-1}s_ip_{i-\thalf}p_{i-1}s_{i-2}s_{i-1}p_{i-\thalf}s_{i-1}L_{i-2}p_{i-\thalf} s_{i-1}s_{i}s_{i-1}p_{i-\thalf}\\
&\qquad=s_{i-1}s_ip_{i-\thalf}s_{i-1}p_{i-2}p_{i-\thalf}L_{i-2}s_{i-1}p_{i-\thalf} s_{i-1}s_{i}s_{i-1}p_{i-\thalf}\\
&\qquad=s_{i-1}s_ip_{i-\thalf}s_{i-1}p_{i-2}p_{i-\thalf}p_{i-1}\sigma_{i-1}s_{i-1}p_{i-\thalf} s_{i-1}s_{i}s_{i-1}p_{i-\thalf}\\
&\qquad=s_{i-1}s_ip_{i-\thalf}s_{i-1}s_{i-2}p_{i-1}\sigma_{i-1}s_{i-1}p_{i-\thalf} s_{i-1}s_{i}s_{i-1}p_{i-\thalf}\\
&\qquad=s_{i-1}s_is_{i-1}s_{i-2}p_{i-\half}p_{i-1}p_{i-\half}\sigma_{i-1}s_{i-1} s_{i-1}s_{i}s_{i-1}p_{i-\thalf}\\
&\qquad=s_{i-1}s_is_{i-1}s_{i-2}p_{i-\half}\sigma_{i-1}s_{i-1} s_{i-1}s_{i}s_{i-1}p_{i-\thalf}\\
&\qquad=s_{i-1}s_is_{i-1}\sigma_{i-\thalf}s_{i-1}p_{i-\thalf}s_{i-1}s_{i}s_{i-1}p_{i-\thalf}\\
&\qquad=s_{i-1}s_is_{i-1}\sigma_{i-\thalf}s_{i-1}s_ip_{i-\thalf}p_{i-\half}s_i\\
&\qquad=p_{i-\half}s_is_{i-1}\sigma_{i-\thalf}s_{i-1}p_{i-\thalf}\\
&\qquad=p_{i-\half}s_is_{i-1}s_{i-2}p_{i-\half}\sigma_{i-1}s_{i-1}\\
&\qquad=p_{i-\thalf}p_{i-\half}s_is_{i-1}\sigma_{i-\thalf}s_{i-1},
\end{align*}
as required. Now, after substituting the expressions
\begin{align*}
L_{i-1}&=-p_{i-\thalf}L_{i-2}s_{i-2}-s_{i-2}L_{i-2}p_{i-\thalf}
+p_{i-\thalf}L_{i-2}p_{i-1}p_{i-\thalf}+s_{i-2}L_{i-2}s_{i-2}+\sigma_{i-1}
\end{align*}
and
\begin{multline*}
\sigma_i=s_{i-2}s_{i-1}\sigma_{i-1}s_{i-1}s_{i-2}+ s_{i-1}p_{i-\thalf}L_{i-2}s_{i-1}p_{i-\thalf}s_{i-1}+ p_{i-\thalf}L_{i-2}s_{i-1}p_{i-\thalf} \\-s_{i-1}p_{i-\thalf}L_{i-2}s_{i-2}p_{i-\half}p_{i-1}p_{i-\thalf} -p_{i-\thalf}p_{i-1}p_{i-\half}s_{i-2}L_{i-2}p_{i-\thalf}s_{i-1}
\end{multline*}
into the definition~\eqref{sigma-i-1}, and simplifying the resulting expression using items~\eqref{k-1-i}--\eqref{k-1-x}, we obtain the equality
\begin{multline*}
\sigma_{i+1}p_{i-\thalf}=s_{i-1}s_is_{i-2}s_{i-1}\sigma_{i-1}s_{i-1}s_{i-2}s_is_{i-1}p_{i-\thalf}+
p_{i-\half}p_{i-\thalf}L_{i-2}p_{i-1}p_{i-\thalf}s_ip_{i-\half}\\
+s_ip_{i-\half}p_{i-\thalf}L_{i-2}p_{i-1}p_{i-\thalf}s_ip_{i-\half}s_i-s_{i-1}s_is_{i-1}p_{i-\thalf}L_{i-2}s_{i-2}p_{i-\half}p_{i-1}p_{i-\thalf}s_is_{i-1}p_{i-\thalf}\\
-s_{i-1}s_ip_{i-\thalf}p_{i-1}p_{i-\half}s_{i-2}L_{i-2}p_{i-\thalf}s_{i-1}s_is_{i-1}p_{i-\thalf},
\end{multline*}
in which the two terms with negative coefficients survive from the expansion of $s_{i-1}s_i\sigma_is_{i}s_{i-1}p_{i-\thalf}$. By items~\eqref{k-1-xi} and~\eqref{k-1-xii}, the two terms with negative coefficients in the last expression are interchanged by the $*$ map on $\mathcal{A}_{i+1}(z)$.  Since 
\begin{align*}
s_{i-1}s_is_{i-2}s_{i-1}\sigma_{i-1}s_{i-1}s_{i-2}s_is_{i-1}p_{i-\thalf}=
p_{i-\thalf}s_{i-1}s_is_{i-2}s_{i-1}\sigma_{i-1}s_{i-1}s_{i-2}s_is_{i-1},
\end{align*}
the right hand side of the above expression for $\sigma_{i+1}p_{i-\thalf}$ is fixed under the $*$ anti-involution on $\mathcal{A}_{i+1}(z)$. This completes the proof of~\eqref{a-b-3}.\newline
\eqref{a-b-4} We show that after substituting the expression 
\begin{align*}
L_{i-1}=
-p_{i-\thalf}L_{i-2}s_{i-2}-s_{i-2}L_{i-2}p_{i-\thalf}+p_{i-\thalf}L_{i-2}p_{i-1}p_{i-\thalf}+s_{i-2}L_{i-2}s_{i-2}+\sigma_{i-1}
\end{align*}
into the definition~\eqref{sigma-i-1}, conjugation by $s_{i-2}$ permutes the summands of  $\sigma_{i+1}$ as follows:
\begin{enumerate}[label=(\roman{*}), ref=\roman{*},leftmargin=0pt,itemindent=1.5em]
\item $(s_ip_{i-\half}p_{i-\thalf}L_{i-2}s_{i-2}s_{i-1}p_{i+\half}p_ip_{i-\half})^{s_{i-2}}
=s_ip_{i-\half}p_{i-\thalf}L_{i-2}s_{i-2}s_{i-1}p_{i+\half}p_ip_{i-\half}$,\label{x-1-i}
\item $(s_ip_{i-\half}s_{i-2}L_{i-2}p_{i-\thalf}s_{i-1}p_{i+\half}p_ip_{i-\half})^{s_{i-2}}
=p_{i-\half}p_ip_{i+\half}s_{i-1}p_{i-\thalf}L_{i-2}s_{i-2}p_{i-\half}s_i$,\label{x-1-ii}
\item $(s_ip_{i-\half}p_{i-\thalf}L_{i-2}p_{i-1}p_{i-\thalf}s_{i-1}p_{i+\half}p_ip_{i-\half})^{s_{i-2}}
=s_ip_{i-\half}p_{i-\thalf}L_{i-2}s_{i-2}s_ip_{i-\half}s_i$,\label{x-1-iii}
\item $(s_ip_{i-\half}s_{i-2}L_{i-2}s_{i-2}s_{i-1}p_{i+\half}p_ip_{i-\half})^{s_{i-2}}
=s_{i-1}s_is_{i-1}p_{i-\thalf}L_{i-2}s_{i-2}p_{i-\half}p_{i-1}p_{i-\thalf}s_is_{i-1}$,\label{x-1-iv}
\item $(s_ip_{i-\half}\sigma_{i-1}s_{i-1}p_{i+\half}p_ip_{i-\half})^{s_{i-2}}=
p_{i-\half}s_{i-2}L_{i-2}p_{i-\thalf}s_ip_{i-\half}$, \label{x-1-v}
\item $(p_{i-\half}p_{i-\thalf}L_{i-2}p_{i-1}p_{i-\thalf}s_ip_{i-\half})^{s_{i-2}}
=p_{i-\half}p_{i-\thalf}L_{i-2}p_{i-1}p_{i-\thalf}s_ip_{i-\half}$,\label{x-1-vi}
\item $(s_ip_{i-\half}s_{i-2}L_{i-2}s_{i-2}s_ip_{i-\half}s_i)^{s_{i-2}}
=s_{i-1}s_is_{i-1}p_{i-\thalf}L_{i-2}s_{i-1}p_{i-\thalf}s_{i-1}s_is_{i-1}$,\label{x-1-vii}
\item $(s_ip_{i-\half}\sigma_{i-1}s_ip_{i-\half}s_i)^{s_{i-2}}
=p_{i-\half}\sigma_{i-1}s_ip_{i-\half}$,\label{x-1-viii}
\item $(s_{i-1}s_ip_{i-\thalf}L_{i-2}s_{i-1}p_{i-\thalf}s_is_{i-1})^{s_{i-2}}
=p_{i-\half}s_{i-2}L_{i-2}s_{i-2}s_ip_{i-\half}$,\label{x-1-ix}
\item $(s_{i-1}s_is_{i-2}s_{i-1}\sigma_{i-1}s_{i-1}s_{i-2}s_is_{i-1})^{s_{i-2}}
=s_{i-1}s_is_{i-2}s_{i-1}\sigma_{i-1}s_{i-1}s_{i-2}s_is_{i-1}$.\label{x-1-x}
\end{enumerate}
The item~\eqref{x-1-i} follows from the relation  $s_{i-2}p_{i-\thalf}=p_{i-\thalf}$ and 
\begin{align*}
s_{i}p_{i-\half}p_{i-\thalf}L_{i-2}s_{i-2} s_{i-1}p_{i+\half}p_ip_{i-\half}s_{i-2}&=
s_{i}p_{i-\half}p_{i-\thalf}L_{i-2}s_{i-1} s_{i-2}s_{i-1}p_{i+\half}p_ip_{i-\half}\\
&=s_{i}p_{i-\half}p_{i-\thalf}L_{i-2}s_{i-2} s_{i-1}p_{i+\half}p_ip_{i-\half}. 
\end{align*}
For the left hand side of~\eqref{x-1-ii},
\begin{align*}
(s_ip_{i-\half}s_{i-2}L_{i-2}p_{i-\thalf}s_{i-1}p_{i+\half}p_ip_{i-\half})^{s_{i-2}}&=s_{i-2}s_ip_{i-\half}s_{i-2}L_{i-2}p_{i-\thalf}s_{i-1}p_{i+\half}s_{i-1}p_{i-1}p_{i-\half}s_{i-2}\\
&=s_{i-2}s_ip_{i-\half}s_{i-2}s_{i-1}p_{i+\half}s_{i-1}L_{i-2}p_{i-\thalf}p_{i-1}p_{i-\half}s_{i-2}\\
&=s_{i-2}s_is_{i-2}s_{i-1}p_{i-\thalf}p_{i+\half}s_{i-1}L_{i-2}p_{i-\thalf}p_{i-1}p_{i-\half}s_{i-2}\\
&=s_is_{i-1}p_{i-\thalf}p_{i+\half}s_{i-1}L_{i-2}p_{i-\thalf}p_{i-1}p_{i-\half}s_{i-2}\\
&=p_{i-\half}s_is_{i-1}p_{i-\thalf}s_{i-1}L_{i-2}p_{i-\thalf}p_{i-1}p_{i-\half}s_{i-2}\\
&=p_{i-\half}s_is_{i-1}p_{i-\thalf}s_{i-1}\sigma_{i-1}p_{i-1}p_{i-\thalf}p_{i-1}p_{i-\half}s_{i-2}\\
&=p_{i-\half}s_is_{i-1}p_{i-\thalf}s_{i-1}\sigma_{i-1}p_{i-1}p_{i-\half}s_{i-2}\\
&=p_{i-\half}s_is_{i-1}s_{i-1}\sigma_{i-1}p_{i-\half}p_{i-1}p_{i-\half}s_{i-2}\\
&=p_{i-\half}s_i\sigma_{i-1}p_{i-\half}s_{i-2},
\end{align*}
and, for the right hand side of~\eqref{x-1-ii},
\begin{align*}
&p_{i-\half}p_ip_{i+\half}s_{i-1}p_{i-\thalf}L_{i-2}s_{i-2}p_{i-\half}s_i
=p_{i-\half}p_ip_{i+\half}s_{i-1}p_{i-\thalf}p_{i-2}\sigma_{i-1}p_{i-\half}s_i\\
&\quad=p_{i-\half}p_{i-1}s_{i-1}p_{i+\half}s_{i-1}p_{i-\thalf}p_{i-2}\sigma_{i-1}p_{i-\half}s_i
=p_{i-\half}p_{i-1}p_{i-\thalf}p_{i-2}s_{i-1}p_{i+\half}s_{i-1}\sigma_{i-1}p_{i-\half}s_i\\
&\quad=p_{i-\half}p_{i-1}p_{i-\thalf}p_{i-2}s_{i-1}p_{i+\half}s_{i-1}\sigma_{i-1}p_{i-\half}s_i
=p_{i-\half}p_{i-1}s_{i-2}s_{i-1}p_{i+\half}s_{i-1}\sigma_{i-1}p_{i-\half}s_i\\
&\quad=p_{i-\half}p_{i-1}s_{i-2}s_{i-1}p_{i+\half}s_{i-1}\sigma_{i-1}p_{i-\half}s_i
=p_{i-\half}p_{i-1}s_{i-2}s_{i-1}p_{i+\half}p_{i+\half}s_{i-1}\sigma_{i-1}s_i\\
&\quad=p_{i-\half}p_{i-1}p_{i-\half}s_{i-2}s_{i-1}p_{i+\half}s_{i-1}\sigma_{i-1}s_i
=p_{i-\half}s_{i-2}s_{i-1}p_{i+\half}s_{i-1}\sigma_{i-1}s_i\\
&\quad=p_{i-\half}s_{i-2}s_{i-1}s_{i-1}s_ip_{i-\half}\sigma_{i-1}
=p_{i-\half}s_{i-2}s_ip_{i-\half}\sigma_{i-1}=p_{i-\half}s_i\sigma_{i-1}p_{i-\half}s_{i-2},
\end{align*}
where the last equality follows from $s_{i-2}p_{i-\half}\sigma_{i-1}=\sigma_{i-1}p_{i-\half}s_{i-2}$. \newline
For the left hand side of~\eqref{x-1-iii},
\begin{align*}
(s_ip_{i-\half}p_{i-\thalf}L_{i-2}p_{i-1}p_{i-\thalf}s_{i-1}p_{i+\half}p_ip_{i-\half})^{s_{i-2}}&=s_ip_{i-\half}p_{i-\thalf}L_{i-2}p_{i-1}p_{i-\thalf}s_{i-1}p_{i+\half}p_ip_{i-\half}s_{i-2}\\
&=s_ip_{i-\half}p_{i-\thalf}L_{i-2}p_{i-1}p_{i-\thalf}s_{i}p_{i-\half}s_is_{i-1}p_ip_{i-\half}s_{i-2}\\
&=s_ip_{i-\half}p_{i-\thalf}L_{i-2}p_{i-1}p_{i-\thalf}s_{i}p_{i-\half}p_{i-1}s_ip_{i-\half}s_{i-2}\\
&=s_ip_{i-\half}p_{i-\thalf}L_{i-2}s_{i}p_{i-1}p_{i-\thalf}p_{i-\half}p_{i-1}s_ip_{i-\half}s_{i-2}\\
&=s_ip_{i-\half}p_{i-\thalf}L_{i-2}s_{i}s_{i-1}p_{i}p_{i-\thalf}s_{i-1}s_ip_{i-\half}s_{i-2}\\
&=s_ip_{i-\thalf}L_{i-2}p_{i-\half}s_{i}s_{i-1}p_{i}p_{i-\thalf}s_{i-1}s_ip_{i-\half}s_{i-2}\\
&=s_ip_{i-\thalf}L_{i-2}s_{i}s_{i-1}p_{i+\half}p_{i}p_{i+\half}p_{i-\thalf}s_{i-1}s_is_{i-2}\\
&=p_{i-\thalf}L_{i-2}s_{i-1}p_{i+\half}p_{i-\thalf}s_{i-1}s_is_{i-2}\\
&=p_{i-\thalf}L_{i-2}s_{i-1}p_{i-\thalf}s_{i-1}s_ip_{i-\half}s_{i-2}\\
&=p_{i-\thalf}p_{i-1}\sigma_{i-1}s_{i-1}p_{i-\thalf}s_{i-1}s_ip_{i-\half}s_{i-2}\\
&=p_{i-\thalf}p_{i-1}p_{i-\half}\sigma_{i-1}s_{i-1}s_{i-1}s_ip_{i-\half}s_{i-2}\\
&=p_{i-\thalf}p_{i-1}p_{i-\half}\sigma_{i-1}s_ip_{i-\half}s_{i-2},
\end{align*}
and for the right hand side of~\eqref{x-1-iii},
\begin{align*}
&s_ip_{i-\half}p_{i-\thalf}L_{i-2}s_{i-2}s_ip_{i-\half}s_i=
s_ip_{i-\half}s_ip_{i-\thalf}L_{i-2}s_{i-2}p_{i-\half}s_i\\
&\quad=s_{i-1}p_{i+\half}s_{i-1}p_{i-\thalf}L_{i-2}s_{i-2}p_{i-\half}s_i
=p_{i-\thalf}L_{i-2}s_{i-1}p_{i+\half}s_{i-1}s_{i-2}p_{i-\half}s_i\\
&\quad=p_{i-\thalf}L_{i-2}s_{i-1}p_{i+\half}p_{i-\thalf}s_{i-1}s_{i-2}s_i
=p_{i-\thalf}p_{i-1}\sigma_{i-1}s_{i-1}p_{i+\half}p_{i-\thalf}s_{i-1}s_{i-2}s_i\\
&\quad=p_{i-\thalf}p_{i-1}\sigma_{i-1}s_{i-1}p_{i+\half}p_{i-\thalf}s_{i-1}s_{i-2}s_i
=p_{i-\thalf}p_{i-1}p_{i-\half}\sigma_{i-1}s_{i-1}p_{i+\half}s_{i-1}s_{i-2}s_i\\
&\quad=p_{i-\thalf}p_{i-1}p_{i-\half}\sigma_{i-1}s_{i}p_{i-\half}s_{i}s_{i-2}s_i
=p_{i-\thalf}p_{i-1}p_{i-\half}\sigma_{i-1}s_{i}p_{i-\half}s_{i-2}.
\end{align*}
From the left hand side of~\eqref{x-1-iv}, we obtain
\begin{align*}
(s_ip_{i-\half}s_{i-2}L_{i-2}s_{i-2}s_{i-1}p_{i+\half}p_ip_{i-\half})^{s_{i-2}}
&=(s_ip_{i-\half}s_{i-2}s_{i-1}L_{i-2}s_{i-1}s_{i-2}s_{i-1}p_{i+\half}p_ip_{i-\half})^{s_{i-2}}\\
&=(s_is_{i-2}s_{i-1}p_{i-\thalf}L_{i-2}s_{i-1}s_{i-2}s_{i-1}p_{i+\half}p_ip_{i-\half})^{s_{i-2}}\\
&=s_is_{i-1}p_{i-\thalf}L_{i-2}s_{i-1}s_{i-2}s_{i-1}p_{i+\half}p_ip_{i-\half}s_{i-2}\\
&=s_is_{i-1}p_{i-\thalf}L_{i-2}s_{i-2}s_{i-1}s_{i-2}p_{i+\half}p_ip_{i-\half}s_{i-2}\\
&=s_is_{i-1}p_{i-\thalf}L_{i-2}s_{i-2}s_{i-1}p_{i+\half}p_is_{i-2}p_{i-\half}s_{i-2}\\
&=s_is_{i-1}p_{i-\thalf}L_{i-2}s_{i-2}s_{i-1}p_{i+\half}p_is_{i-1}p_{i-\thalf}s_{i-1}\\
&=s_is_{i-1}p_{i-\thalf}L_{i-2}s_{i-2}s_{i}p_{i-\half}s_is_{i-1}p_is_{i-1}p_{i-\thalf}s_{i-1}\\
&=s_is_{i-1}s_{i}p_{i-\thalf}L_{i-2}s_{i-2}p_{i-\half}s_ip_{i-1}p_{i-\thalf}s_{i-1}\\
&=s_{i-1}s_{i}s_{i-1}p_{i-\thalf}L_{i-2}s_{i-2}p_{i-\half}s_ip_{i-1}p_{i-\thalf}s_{i-1}\\
&=s_{i-1}s_{i}s_{i-1}p_{i-\thalf}L_{i-2}s_{i-2}p_{i-\half}p_{i-1}p_{i-\thalf}s_is_{i-1}
\end{align*}
which is identical to the right hand side of~\eqref{x-1-iv}.\newline 
From the left hand side of~\eqref{x-1-v}, we obtain  
\begin{align*}
&(s_ip_{i-\half}\sigma_{i-1}s_{i-1}p_{i+\half}p_ip_{i-\half})^{s_{i-2}}
=s_{i-2}s_i\sigma_{i-1}s_{i-1}p_{i-\thalf}p_{i+\half}p_ip_{i-\half}s_{i-2}\\
&\quad=s_i\sigma_{i-\thalf}s_{i-1}p_{i+\half}p_ip_{i-\half}p_{i-\thalf}
=p_{i-\half}\sigma_{i-\thalf}s_is_{i-1}p_ip_{i-\half}p_{i-\thalf}\\
&\quad=p_{i-\half}\sigma_{i-\thalf}s_ip_{i-1}p_{i-\half}p_{i-\thalf}
=p_{i-\half}\sigma_{i-1}s_ip_{i-2}p_{i-\half}p_{i-\thalf}\\
&\quad=p_{i-\half}\sigma_{i-1}p_{i-2}p_{i-\thalf}s_ip_{i-\half}
=p_{i-\half}s_{i-2}L_{i-2}p_{i-\thalf}s_ip_{i-\half},
\end{align*} 
which is identical to the right hand side of~\eqref{x-1-v}.\newline
The item~\eqref{x-1-vi} follows immediately from the relation $s_{i-2}p_{i-\thalf}=p_{i-\thalf}s_{i-2}=p_{i-\thalf}$. \newline
From the left hand side of~\eqref{x-1-vii}, we obtain
\begin{align*}
&(s_ip_{i-\half}s_{i-2}L_{i-2}s_{i-2}s_ip_{i-\half}s_i)^{s_{i-2}}
=s_{i-2}s_is_{i-2}s_{i-1}p_{i-\thalf}s_{i-1}L_{i-2}s_{i-2}s_ip_{i-\half}s_is_{i-2}\\
&\quad=s_is_{i-1}p_{i-\thalf}s_{i-1}L_{i-2}s_{i-2}s_{i}p_{i-\half}s_{i}s_{i-2}
=s_is_{i-1}p_{i-\thalf}s_{i-1}L_{i-2}s_{i-2}s_{i-1}p_{i+\half}s_{i-1}s_{i-2}\\
&\quad=s_is_{i-1}p_{i-\thalf}s_{i-1}L_{i-2}s_{i}s_{i-1}p_{i-\thalf}s_{i-1}s_{i}
=s_is_{i-1}p_{i-\thalf}s_{i}L_{i-2}s_{i-1}s_{i}p_{i-\thalf}s_{i-1}s_{i}\\
&\quad=s_is_{i-1}s_{i}p_{i-\thalf}L_{i-2}s_{i-1}p_{i-\thalf}s_{i}s_{i-1}s_{i}
=s_{i-1}s_{i}s_{i-1}p_{i-\thalf}L_{i-2}s_{i-1}p_{i-\thalf}s_{i-1}s_{i}s_{i-1},
\end{align*}
which is identical to the right hand side of~\eqref{x-1-vii}.\newline
From the left hand side of~\eqref{x-1-viii}, 
\begin{align*}
&(s_ip_{i-\half}\sigma_{i-1}s_ip_{i-\half}s_i)^{s_{i-2}}
=s_{i-2}s_ip_{i-\half}\sigma_{i-1}s_{i-1}p_{i+\half}s_{i-1}s_{i-2}\\
&\quad=s_{i-2}s_i\sigma_{i-1}s_{i-1}p_{i-\thalf}p_{i+\half}s_{i-1}s_{i-2}
=\sigma_{i-\thalf}s_is_{i-1}p_{i-\thalf}p_{i+\half}s_{i-1}s_{i-2}\\
&\quad=\sigma_{i-\thalf}s_is_{i-1}p_{i+\half}p_{i-\thalf}s_{i-1}s_{i-2}
=\sigma_{i-\thalf}p_{i-\half}s_is_{i-1}s_{i-1}s_{i-2}p_{i-\half}\\
&\quad=\sigma_{i-\thalf}p_{i-\half}s_is_{i-2}p_{i-\half}
=p_{i-\half}\sigma_{i-\thalf}s_{i-2}s_ip_{i-\half}
=p_{i-\half}\sigma_{i-1}s_ip_{i-\half},
\end{align*}
which is identical to the right hand side of~\eqref{x-1-viii}.\newline
From the left hand side of~\eqref{x-1-ix},
\begin{align*}
&(s_{i-1}s_ip_{i-\thalf}L_{i-2}s_{i-1}p_{i-\thalf}s_is_{i-1})^{s_{i-2}}
=s_{i-2}s_{i-1}p_{i-\thalf}L_{i-2}s_is_{i-1}s_ip_{i-\thalf}s_{i-1}s_{i-2}\\
&\quad=p_{i-\half}s_{i-2}s_{i-1}L_{i-2}s_is_{i-1}s_is_{i-1}s_{i-2}p_{i-\half}
=p_{i-\half}s_{i-2}L_{i-2}s_{i-1}s_is_{i-1}s_is_{i-1}s_{i-2}p_{i-\half}\\
&\quad=p_{i-\half}s_{i-2}L_{i-2}s_is_{i-2}p_{i-\half},
\end{align*}
which is identical to the right hand side of~\eqref{x-1-ix}. \newline
The equality~\eqref{x-1-x} follows from the fact that $s_{i-2}s_{i-1}s_is_{i-2}s_{i-1}=s_{i-1}s_is_{i-2}s_{i-1}s_{i}$. \newline
\eqref{a-b-5} By~\eqref{a-b-2} and~\eqref{a-b-4}, $\sigma_{i+1}$ commutes with $\langle s_{i-2},p_{i-1}\rangle$, and so with $p_{i-2}=s_{i-2}p_{i-1}s_{i-2}$. \newline
\eqref{a-b-6} By Proposition~\ref{z-0}, and~\eqref{a-b-2},
\begin{align*}
\sigma_{i+\half}p_{i-1}=s_{i}\sigma_{i+1}p_{i-1}=p_{i-1}s_{i}\sigma_{i+1}=p_{i-1}\sigma_{i+\half}. 
\end{align*}
\eqref{a-b-7}--\eqref{a-b-9} Can be proved using the same argument as part~\eqref{a-b-6}. 
\end{proof}
\begin{theorem}\label{a-c}
The elements $L_{i+1}$ and $L_{i+\half}$ satisfy the following commutation relations:
\begin{enumerate}[label=(\arabic{*}), ref=\arabic{*},leftmargin=0pt,itemindent=1.5em]
\item  $L_{i+1}p_{i+\half}=p_{i+\half}L_{i+1}=p_{i+\half}L_ip_{i+1}p_{i+\half}$, for $i=1,2,\ldots.$\label{a-c-1}
\item $L_{i+1}p_{i}=p_{i}L_{i+1}$, for $i=1,2,\ldots.$\label{a-c-2}
\item $L_{i+1}p_{i-\half}=p_{i-\half}L_{i+1}$, for $i=2,3,\ldots.$ \label{a-c-3}
\item $L_{i+1}s_{i-1}=s_{i-1}L_{i+1}$, for $i=2,3,\ldots.$\label{a-c-4}
\item $L_{i+1}p_{i-1}=p_{i-1}L_{i+1}$, for $i=2,3\ldots.$\label{a-c-5}
\item $L_{i+\half}p_{i}=p_{i}L_{i+\half}$, for $i=1,2,\ldots.$\label{a-c-6}
\item $L_{i+\half}p_{i-\half}=p_{i-\half}L_{i+\half}$, for $i=2,3,\ldots.$\label{a-c-7}
\item $L_{i+\half}s_{i-1}=s_{i-1}L_{i+\half}$, for $i=2,3,\ldots.$\label{a-c-8}
\item $L_{i+\half}p_{i-1}=p_{i-1}L_{i+\half}$, for $i=2,3\ldots.$\label{a-c-9}
\end{enumerate}
\end{theorem}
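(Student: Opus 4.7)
The plan is to prove the nine items in sequence. Part~(\ref{a-c-1}) is a direct computation from the recursion, and the remaining eight parts proceed by strong induction on $i$, using Theorem~\ref{a-b} for the commutation properties of $\sigma_{i+1}$ and $\sigma_{i+\half}$, Proposition~\ref{prel:a} for the fundamental identities, and the relations of Theorem~\ref{hr-presentation} to commute the various diagram generators past one another.

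For part~(\ref{a-c-1}), the first step is to multiply the recursion~\eqref{jm-i-1} on the right by $p_{i+\half}$. Using $s_ip_{i+\half}=p_{i+\half}$, $p_{i+\half}^2=p_{i+\half}$, the identity $p_{i+\half}L_ip_{i+\half}=p_{i+\half}$ of Proposition~\ref{prel:a}(\ref{jm:comm:a0}), and $\sigma_{i+1}p_{i+\half}=p_{i+\half}$ of Proposition~\ref{prel:a}(\ref{prel:a0}), four of the five summands of $L_{i+1}p_{i+\half}$ cancel in pairs, leaving $p_{i+\half}L_ip_{i+1}p_{i+\half}$. This expression is manifestly fixed by the anti-involution $*$ (using $L_i^*=L_i$, known inductively), so applying $*$ also yields $p_{i+\half}L_{i+1}=p_{i+\half}L_ip_{i+1}p_{i+\half}$.

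For parts~(\ref{a-c-3})--(\ref{a-c-5}) and~(\ref{a-c-7})--(\ref{a-c-9}), each of the elements $p_{i-\half}$, $s_{i-1}$, $p_{i-1}$ (respectively $p_{i-\thalf}$, $s_{i-2}$, $p_{i-2}$) has indices separated sufficiently from $i$, $i+1$ that it commutes directly with each of the generators appearing in~\eqref{jm-i-1} or~\eqref{jm-i-2} other than $\sigma_{i+1}$ or $\sigma_{i+\half}$, and with $L_i$ or $L_{i-\half}$ by the inductive hypothesis. In each case the desired commutation therefore reduces to the corresponding statement for $\sigma_{i+1}$ or $\sigma_{i+\half}$, which is provided by the matching item of Theorem~\ref{a-b}.

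The main obstacle is parts~(\ref{a-c-2}) and~(\ref{a-c-6}), where $p_i$ interacts nontrivially with the generators $s_i$, $p_{i+\half}$, $p_{i+1}$ of~\eqref{jm-i-1} (and with $s_i$, $p_{i+\half}$, $p_i$ in~\eqref{jm-i-2}), and Theorem~\ref{a-b} does not supply $[\sigma_{i+1},p_i]=0$ or $[\sigma_{i+\half},p_i]=0$ directly. For~(\ref{a-c-2}) the plan is to combine part~(\ref{a-c-1}) with the contraction identities $p_ip_{i+\half}p_i=p_i$, $p_{i+\half}p_ip_{i+\half}=p_{i+\half}$, and $s_ip_is_i=p_{i+1}$ of Theorem~\ref{hr-presentation}, together with Proposition~\ref{prel:a}(\ref{prel:a3}) and~(\ref{prel:a2}) which rewrite $\sigma_{i+1}p_ip_{i+\half}$ and $\sigma_{i+1}p_{i+1}p_{i+\half}$ in simpler form. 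The summands of $L_{i+1}p_i-p_iL_{i+1}$ can then be reorganised so that cancellations become visible. Part~(\ref{a-c-6}) is handled analogously from~\eqref{jm-i-2}. In both cases, the $*$-invariance of $p_i$ and of $L_{i+1}$, $L_{i+\half}$ (Proposition~\ref{invariant}) means that it suffices to establish a one-sided commutation, the other being obtained by applying $*$.
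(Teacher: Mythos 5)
Your treatment of parts~(\ref{a-c-1}), (\ref{a-c-2}) and~(\ref{a-c-6}) follows the paper's route: right-multiply the recursion by the relevant generator, cancel using Proposition~\ref{prel:a}, and finish by observing that what remains is fixed under $*$. Parts~(\ref{a-c-5}) and~(\ref{a-c-9}) also go through as you describe (the paper instead deduces them from $p_{i-1}=s_{i-1}p_is_{i-1}$ once the neighbouring parts are known, but your direct check is fine since $p_{i-1}$ genuinely commutes with $s_i$, $p_{i+\half}$, $p_{i+1}$ and $p_i$).

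The gap is in parts~(\ref{a-c-3}), (\ref{a-c-4}), (\ref{a-c-7}) and~(\ref{a-c-8}). Your blanket claim that $p_{i-\half}$ and $s_{i-1}$ ``commute directly with each of the generators appearing in~\eqref{jm-i-1} or~\eqref{jm-i-2} other than $\sigma_{i+1}$ or $\sigma_{i+\half}$'' is false. By relation (\ref{com-0}\ref{com-v}) of Theorem~\ref{hr-presentation}, $s_i$ does not commute with $p_{i-\half}$ (rather $s_ip_{i-\half}s_i=s_{i-1}p_{i+\half}s_{i-1}$), and $s_{i-1}$ commutes with neither $s_i$ (braid relation) nor $p_{i+\half}$ nor $p_i$ (indeed $s_{i-1}p_is_{i-1}=p_{i-1}$). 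So the summands $-s_iL_ip_{i+\half}$, $-p_{i+\half}L_is_i$ and $s_iL_is_i$ cannot simply have $p_{i-\half}$ or $s_{i-1}$ pushed through them, and these four parts do not reduce to Theorem~\ref{a-b}. This is exactly where the bulk of the paper's proof lives: for~(\ref{a-c-3}) and~(\ref{a-c-7}) one must expand $L_i$ (respectively $L_{i-\half}$) one further level via the recursion and establish auxiliary identities such as $p_{i+\half}L_is_ip_{i-\half}=p_{i+\half}\sigma_is_i$ and $s_iL_ip_{i+\half}p_{i-\half}=s_ip_{i-\half}L_{i-1}p_ip_{i-\half}p_{i+\half}$, so that $L_{i+1}p_{i-\half}$ collapses to a manifestly $*$-symmetric expression; for~(\ref{a-c-4}) and~(\ref{a-c-8}) one shows that conjugation by $s_{i-1}$ permutes the summands of the doubly-expanded formula for $L_{i+1}$ (respectively $L_{i+\half}$). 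None of this is supplied by your outline. A smaller but real error: in parts~(\ref{a-c-7})--(\ref{a-c-9}) the elements in question are again $p_{i-\half}$, $s_{i-1}$, $p_{i-1}$, not $p_{i-\thalf}$, $s_{i-2}$, $p_{i-2}$ as you state, so even the intended reduction targets the wrong identities there.
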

\begin{proof}
\eqref{a-c-1} Using Proposition~\ref{prel:a},
\begin{align*}
L_{i+1}p_{i+\half}&=-s_iL_ip_{i+\half}-p_{i+\half}L_is_ip_{i+\half}+p_{i+\half}L_ip_{i+1}p_{i+\half}+s_iL_is_ip_{i+\half}+\sigma_{i+1}p_{i+\half}\\
&=-s_iL_ip_{i+\half}-p_{i+\half}L_ip_{i+\half}+p_{i+\half}L_ip_{i+1}p_{i+\half}+s_iL_ip_{i+\half}+p_{i+\half}\\
&=-s_iL_ip_{i+\half}-p_{i+\half}+p_{i+\half}L_ip_{i+1}p_{i+\half}+s_iL_ip_{i+\half}+p_{i+\half}\\
&=p_{i+\half}L_ip_{i+1}p_{i+\half},
\end{align*}
as required.\newline
\eqref{a-c-2} From Proposition~\ref{prel:a}, we obtain $\sigma_{i+1}p_i=\sigma_{i+1}p_ip_{i+\half}p_i=s_iL_ip_{i+\half}p_i$. Thus
\begin{align*}
L_{i+1}p_{i}&=-s_iL_ip_{i+\half}p_{i}-p_{i+\half}L_is_ip_{i}+p_{i+\half}L_ip_{i+1}p_{i+\half}p_{i}+s_{i}L_is_{i}p_{i}+\sigma_{i+1}p_{i}\\
&=(\sigma_{i+1}p_i-s_iL_ip_{i+\half}p_{i})+(p_{i+\half}L_ip_{i+1}p_{i+\half}p_i-p_{i+\half}L_is_ip_i)+s_iL_{i}p_{i+1}s_i\\ 
&=(\sigma_{i+1}p_i-\sigma_{i+1}p_i)+(p_{i+\half}L_is_ip_i-p_{i+\half}L_is_ip_i)+s_iL_{i}p_{i+1}s_i,
\end{align*}
and the statement now follows from the fact that the right hand side of the above expression is fixed under the $*$ anti-involution on $\mathcal{A}_{i+1}(z)$.\newline
\eqref{a-c-3} We first show that 
\begin{enumerate}[label=(\roman{*}), ref=\roman{*},leftmargin=0pt,itemindent=1.5em]
\item $p_{i+\half}L_is_ip_{i-\half}=p_{i+\half}\sigma_is_i$, \label{u-3-i}
\item $s_iL_ip_{i+\half}p_{i-\half}=s_ip_{i-\half}L_{i-1}p_{i}p_{i-\half}p_{i+\half}$, \label{u-3-ii}
\item $p_{i+\half}L_ip_{i+1}p_{i+\half}p_{i-\half}=
p_{i+\half}p_{i-\half}L_{i-1}p_ip_{i-\half}p_{i+\half}$, \label{u-3-iii}
\item $\sigma_{i+1}p_{i-\half}=p_{i-\half}\sigma_{i+1}$.\label{u-3-iv}
\end{enumerate}
\eqref{u-3-i} The definition~\eqref{jm-i-1} gives 
\begin{align*}
p_{i+\half}L_is_ip_{i-\half}
&=-p_{i+\half}s_{i-1}L_{i-1}p_{i-\half}s_ip_{i-\half}-p_{i+\half}p_{i-\half}L_{i-1}s_{i-1}s_ip_{i-\half}\\
&\quad+p_{i+\half}p_{i-\half}L_{i-1}p_ip_{i-\half}s_ip_{i-\half}+p_{i+\half}s_{i-1}L_{i-1}s_{i-1}s_ip_{i-\half}
+p_{i+\half}\sigma_is_ip_{i-\half}\\
&=-p_{i+\half}s_{i-1}L_{i-1}p_{i-\half}p_{i+\half}
-p_{i+\half}p_{i-\half}L_{i-1}p_{i+\half}s_{i-1}s_i\\
&\quad+p_{i+\half}p_{i-\half}L_{i-1}p_ip_{i-\half}p_{i+\half}+p_{i+\half}s_{i-1}L_{i-1}p_{i+\half}s_{i-1}s_i
+p_{i+\half}\sigma_is_ip_{i-\half}\\
&=-p_{i+\half}p_{i-\half}L_{i-1}p_{i-\half}
-p_{i+\half}p_{i-\half}L_{i-1}s_{i-1}s_i\\
&\quad+p_{i+\half}p_{i-\half}L_{i-1}p_{i-\half}+p_{i+\half}p_{i-\half}L_{i-1}s_{i-1}s_i
+p_{i+\half}\sigma_is_ip_{i-\half}\\
&=p_{i+\half}\sigma_is_i,
\end{align*}
where the last equality follows from Proposition~\ref{prel:a}. \newline
\eqref{u-3-ii} The definition~\eqref{jm-i-1} gives 
\begin{align*}
s_iL_ip_{i+\half}p_{i-\half}&=
-s_is_{i-1}L_{i-1}p_{i-\half}p_{i+\half}
-s_ip_{i-\half}L_{i-1}s_{i-1}p_{i+\half}p_{i-\half}\\
&\quad+s_ip_{i-\half}L_{i-1}p_ip_{i-\half}p_{i+\half}
+s_is_{i-1}L_{i-1}s_{i-1}p_{i+\half}p_{i-\half}
+s_i\sigma_ip_{i+\half}p_{i-\half}\\
&=-s_is_{i-1}L_{i-1}p_{i-\half}p_{i+\half}
-s_ip_{i-\half}L_{i-1}p_{i+\half}p_{i-\half}\\
&\quad+s_ip_{i-\half}L_{i-1}p_ip_{i-\half}p_{i+\half}
+s_is_{i-1}L_{i-1}p_{i+\half}p_{i-\half}
+p_{i+\half}p_{i-\half}\\
&=s_ip_{i-\half}L_{i-1}p_ip_{i-\half}p_{i+\half}
\end{align*}
since $p_{i-\half}L_{i-1}p_{i-\half}=p_{i-\half}$. \newline 
\eqref{u-3-iii} The definition~\eqref{jm-i-1} gives 
\begin{align*}
p_{i+\half}L_ip_{i+1}p_{i+\half}p_{i-\half}
&=-p_{i+\half}s_{i-1}L_{i-1}p_{i-\half}p_{i+1}p_{i+\half}
-p_{i+\half}p_{i-\half}L_{i-1}s_{i-1}p_{i+1}p_{i+\half}p_{i-\half}\\
&\quad+p_{i+\half}p_{i-\half}L_{i-1}p_ip_{i-\half}p_{i+1}p_{i+\half}
+p_{i+\half}s_{i-1}L_{i-1}s_{i-1}p_{i+1}p_{i+\half}p_{i-\half}\\
&\quad+p_{i+\half}\sigma_ip_{i+1}p_{i+\half}p_{i-\half}\\
&=-p_{i+\half}s_{i-1}L_{i-1}p_{i-\half}p_{i+1}p_{i+\half}
-p_{i+\half}p_{i-\half}L_{i-1}p_{i-\half}p_{i+1}p_{i+\half}\\
&\quad+p_{i+\half}p_{i-\half}L_{i-1}p_ip_{i-\half}p_{i+1}p_{i+\half}
+p_{i+\half}s_{i-1}L_{i-1}p_{i-\half}p_{i+1}p_{i+\half}\\
&\quad+p_{i+\half}p_{i-\half}p_{i+1}p_{i+\half}\\
&=-p_{i+\half}p_{i-\half}p_{i+1}p_{i+\half}+p_{i+\half}p_{i-\half}L_{i-1}p_ip_{i-\half}p_{i+1}p_{i+\half}\\
&\quad+p_{i+\half}p_{i-\half}p_{i+1}p_{i+\half}\\
&=p_{i+\half}p_{i-\half}L_{i-1}p_ip_{i-\half}p_{i+1}p_{i+\half}.
\end{align*}
\eqref{u-3-iv} Was demonstrated in Theorem~\ref{a-b}. \newline
Now, using 
\begin{align*}
s_{i}L_{i-1}s_{i-1}p_{i-\half}
&=-s_is_{i-1}L_{i-1}p_{i-\half}
-p_{i-\half}L_{i-1}s_{i-1}s_{i}p_{i-\half}
+s_ip_{i-\half}L_{i-1}p_ip_{i-\half}s_ip_{i-\half}\\
&\quad+s_is_{i-1}L_{i-1}s_{i-1}s_{i}p_{i-\half}
+s_i\sigma_{i}s_ip_{i-\half}\\
&=-s_is_{i-1}L_{i-1}p_{i-\half}
-p_{i-\half}L_{i-1}s_{i-1}s_{i}
+s_ip_{i-\half}L_{i-1}p_ip_{i-\half}p_{i+\half}\\
&\quad+s_is_{i-1}L_{i-1}s_{i-1}s_{i}p_{i-\half}
+p_{i+\half}\sigma_{i}s_i,
\end{align*}
we obtain
\begin{align*}
L_{i+1}p_{i-\half}
&=-s_iL_{i}p_{i+\half}p_{i-\half}
-p_{i+\half}L_is_ip_{i-\half}
+p_{i+\half}L_{i}p_{i+1}p_{i+\half}p_{i-\half}
+s_iL_is_ip_{i-\half}+\sigma_{i+1}p_{i-\half}\\
&=-s_ip_{i-\half}L_{i-1}p_ip_{i-\half}p_{i+\half}
-p_{i+\half}\sigma_is_i
+p_{i+\half}p_{i-\half}L_{i-1}p_ip_{i-\half}p_{i+1}p_{i+\half}\\
&\quad+s_iL_is_ip_{i-\half}
+\sigma_{i+1}p_{i-\half}\\
&=p_{i+\half}p_{i-\half}L_{i-1}p_ip_{i-\half}p_{i+1}p_{i+\half}
+s_{i}s_{i-1}L_{i-1}s_{i-1}s_ip_{i-\half}+\sigma_{i+1}p_{i-\half}.
\end{align*}
Since the right hand side of the last expression is fixed under the $*$ anti-involution on $\mathcal{A}_{i+1}(z)$, the proof of~\eqref{a-c-3} is complete. \newline
\eqref{a-c-4} We show that after substituting the expression
\begin{align*}
L_i=-p_{i-\half}L_{i-1}s_{i-1}-s_{i-1}L_{i-1}p_{i-\half}+p_{i-\half}L_{i-1}p_{i}p_{i-\half}+s_{i-1}L_{i-1}s_{i-1}+\sigma_i
\end{align*}
into the definition~\eqref{jm-i-1}, conjugation by $s_{i-1}$ permutes the summands of  $L_{i+1}$ as follows:
\begin{enumerate}[label=(\roman{*}), ref=\roman{*},leftmargin=0pt,itemindent=1.5em]
\item $s_{i-1}p_{i+\half}p_{i-\half}L_{i-1}s_{i-1}s_is_{i-1}
=p_{i+\half}p_{i-\half}L_{i-1}s_{i-1}s_i$,\label{x-2-i}
\item $s_{i-1}p_{i+\half}s_{i-1}L_{i-1}p_{i-\half}s_is_{i-1}
=s_ip_{i-\half}L_{i-1}s_{i-1}p_{i+\half}$, \label{x-2-ii}
\item $s_{i-1}p_{i+\half}p_{i-\half}L_{i-1}p_ip_{i-\half}s_is_{i-1}
=p_{i+\half}p_{i-\half}L_{i-1}s_{i-1}p_{i+1}p_{i+\half}$,\label{x-2-iii}
\item $s_{i-1}p_{i+\half}s_{i-1}L_{i-1}s_{i-1}s_is_{i-1}
=s_ip_{i-\half}L_{i-1}s_{i-1}s_i$,\label{x-2-iv}
\item $s_{i-1}p_{i+\half}\sigma_is_is_{i-1}
=s_ip_{i-\half}L_{i-1}s_{i-1}p_{i+\half}p_ip_{i-\half}$,\label{x-2-v}
\item $s_{i-1}p_{i+\half}p_{i-\half}L_{i-1}p_ip_{i-\half}p_{i+1}p_{i+\half}s_{i-1}
=p_{i+\half}p_{i-\half}L_{i-1}p_ip_{i-\half}p_{i+1}p_{i+\half}$,\label{x-2-vi}
\item $s_{i-1}p_{i+\half}s_{i-1}L_{i-1}s_{i-1}p_{i+1}p_{i+\half}s_{i-1}
=s_ip_{i-\half}L_{i-1}p_ip_{i-\half}s_i$,\label{x-2-vii}
\item $s_{i-1}p_{i+\half}\sigma_ip_{i+1}p_{i+\half}s_{i-1}
=s_ip_{i-\half}L_{i-1}s_{i}p_{i-\half}s_i$, \label{x-2-viii}
\item $s_{i-1}s_is_{i-1}L_{i-1}s_{i-1}s_is_{i-1}
=s_is_{i-1}L_{i-1}s_{i-1}s_i$,\label{x-2-ix}
\item $s_{i-1}p_{i-\half}L_{i-1}s_ip_{i-\half}s_{i-1}
=p_{i-\half}L_{i-1}s_ip_{i-\half}$.\label{x-2-x}
\end{enumerate}
For item~\eqref{x-2-i},
\begin{align*}
s_{i-1}p_{i+\half}p_{i-\half}L_{i-1}s_{i-1}s_is_{i-1}
=p_{i+\half}p_{i-\half}L_{i-1}s_is_{i-1}s_{i}
=p_{i+\half}p_{i-\half}L_{i-1}s_{i-1}s_{i}.
\end{align*}
For item~\eqref{x-2-ii},
\begin{align*}
&s_{i-1}p_{i+\half}s_{i-1}L_{i-1}p_{i-\half}s_is_{i-1}
=s_{i}p_{i-\half}s_{i}L_{i-1}p_{i-\half}s_is_{i-1}\\
&\quad=s_{i}p_{i-\half}L_{i-1}s_{i}p_{i-\half}s_is_{i-1}
=s_{i}p_{i-\half}L_{i-1}s_{i-1}p_{i+\half}.
\end{align*}
For item~\eqref{x-2-iii},
\begin{align*}
&s_{i-1}p_{i+\half}p_{i-\half}L_{i-1}p_ip_{i-\half}s_is_{i-1}
=p_{i+\half}p_{i-\half}L_{i-1}p_is_is_{i-1}p_{i+\half}
=p_{i+\half}p_{i-\half}L_{i-1}s_{i-1}p_{i+1}p_{i+\half}.
\end{align*}
For item~\eqref{x-2-iv},
\begin{align*}
&s_{i-1}p_{i+\half}s_{i-1}L_{i-1}s_{i-1}s_is_{i-1}
=s_{i}p_{i-\half}s_{i}L_{i-1}s_{i-1}s_is_{i-1}\\
&\quad=s_{i}p_{i-\half}L_{i-1}s_{i}s_{i-1}s_is_{i-1}
=s_{i}p_{i-\half}L_{i-1}s_{i-1}s_i.
\end{align*}
For the left hand side of item~\eqref{x-2-v},
\begin{align*}
s_{i-1}p_{i+\half}\sigma_is_is_{i-1}=s_{i-1}\sigma_is_ip_{i-\half}s_{i-1}=\sigma_{i-\half}s_ip_{i-\half},
\end{align*}
and for the right hand side of item~\eqref{x-2-v},
\begin{align*}
&s_ip_{i-\half}L_{i-1}s_{i-1}p_{i+\half}p_ip_{i-\half}=
s_ip_{i-\half}L_{i-1}s_{i}p_{i-\half}s_is_{i-1}p_ip_{i-\half}
=s_ip_{i-\half}L_{i-1}s_{i}p_{i-\half}s_ip_{i-1}p_{i-\half}\\
&\quad=s_ip_{i-\half}s_{i}L_{i-1}p_{i-\half}p_{i-1}s_ip_{i-\half}
=s_ip_{i-\half}s_{i}\sigma_ip_ip_{i-\half}p_{i-1}s_ip_{i-\half}
=s_ip_{i-\half}s_{i}\sigma_is_{i-1}p_{i-1}s_ip_{i-\half}\\
&\quad=s_is_{i}\sigma_ip_{i+\half}s_{i-1}s_ip_{i-1}p_{i-\half}
=\sigma_is_{i-1}s_ip_{i-\half}p_{i-1}p_{i-\half}
=\sigma_{i-\half}s_ip_{i-\half}.
\end{align*}
The item~\eqref{x-2-vi} follows from the relation $s_{i-1}p_{i-\half}=p_{i-\half}s_{i-1}=p_{i-\half}$. For item~\eqref{x-2-vii},
\begin{align*}
&s_{i-1}p_{i+\half}s_{i-1}L_{i-1}s_{i-1}p_{i+1}p_{i+\half}s_{i-1}
=s_{i}p_{i-\half}s_{i}L_{i-1}p_{i+1}s_{i}p_{i-\half}s_{i}
=s_{i}p_{i-\half}L_{i-1}p_{i}p_{i-\half}s_{i}.
\end{align*}
For item~\eqref{x-2-viii},
\begin{align*}
&s_{i-1}p_{i+\half}\sigma_ip_{i+1}p_{i+\half}s_{i-1}=
s_{i-1}\sigma_is_ip_{i-\half}s_ip_{i+1}p_{i+\half}s_{i-1}
=\sigma_{i-\half}s_ip_{i-\half}p_{i}p_{i+\half}s_{i-1}\\
&\quad=\sigma_{i-\half}s_ip_{i-\half}p_{i}s_{i-1}s_ip_{i-\half}s_i
=\sigma_{i-\half}s_ip_{i-\half}p_{i-1}s_ip_{i-\half}s_i
=\sigma_{i-\half}s_ip_{i-\half}s_ip_{i-1}p_{i-\half}s_i\\
&\quad=\sigma_{i-\half}s_ip_{i-\half}s_ip_{i-1}p_{i-\half}s_i
=\sigma_{i-\half}s_{i-1}p_{i+\half}s_{i-1}p_{i-1}p_{i-\half}s_i
=s_{i-1}p_{i+\half}s_{i-1}\sigma_{i-\half}p_{i-1}p_{i-\half}s_i\\
&\quad=s_{i}p_{i-\half}s_{i}\sigma_{i-\half}p_{i-1}p_{i-\half}s_i
=s_{i}p_{i-\half}s_iL_{i-1}p_{i-\half}s_i.
\end{align*}
\eqref{x-2-ix} Follows from the Coxeter relations, and~\eqref{x-2-x} from the relation $s_{i-1}p_{i-\half}=p_{i-\half}s_{i-1}=p_{i-\half}$.\newline
\eqref{a-c-5} By parts~\eqref{a-c-2} and~\eqref{a-c-4}, $L_{i+1}$ commutes with $\langle p_{i},s_{i-1}\rangle$, and so with $p_{i-1}=s_{i-1}p_is_{i-1}$. \newline
\eqref{a-c-6} From item~\eqref{prel:a3} of Proposition~\ref{prel:a}, $\sigma_{i+\half}p_i=s_i\sigma_{i+1}p_ip_{i+\half}p_i=L_ip_{i+\half}p_i$, and 
\begin{align*}
L_{i+\half}p_i&=
-p_{i+\half}L_ip_i-L_ip_{i+\half}p_i+p_{i+\half}L_ip_ip_{i+\half}p_i+s_iL_{i-\half}s_ip_i+\sigma_{i+\half}p_i\\
&=-p_{i+\half}L_ip_i-L_ip_{i+\half}p_i+p_{i+\half}L_ip_i+s_iL_{i-\half}p_{i+1}s_i+L_ip_{i+\half}p_i\\
&=s_iL_{i-\half}p_{i+1}s_i.
\end{align*}
Since  $s_iL_{i-\half}p_{i+1}s_i=s_ip_{i+1}L_{i-\half}s_i$, this completes the proof of~\eqref{a-c-6}.\newline 
\eqref{a-c-7} We show that 
\begin{multline}\label{z-a-1}
L_{i+\half}p_{i-\half}=-p_{i-\half}L_{i-1}p_ip_{i-\half}p_{i+\half}-p_{i+\half}p_{i-\half}L_{i-1}p_ip_{i-\half}\\
+s_is_{i-1}L_{i-\thalf}p_{i+\half}s_{i-1}s_i+s_is_{i-1}\sigma_{i-\half}p_{i+\half}s_{i-1}s_i. 
\end{multline}
From the definition~\eqref{jm-i-2}, 
\begin{align}\label{z-a-2}
L_{i+\half}p_{i-\half}=(-L_ip_{i+\half}-p_{i+\half}L_i+p_{i+\half}L_ip_ip_{i+\half}+s_iL_{i-\half}s_i+\sigma_{i+\half})p_{i-\half}.
\end{align}
Using part~\eqref{a-c-1} the first three summands in the right hand side of~\eqref{z-a-2} are transformed as:
\begin{align*}
L_ip_{i+\half}p_{i-\half}=p_{i-\half}L_{i-1}p_ip_{i-\half}p_{i+\half}&&\text{and}
&&p_{i+\half}L_ip_{i-\half}=p_{i+\half}p_{i-\half}L_{i-1}p_ip_{i-\half},
\end{align*}
and
\begin{align*}
p_{i+\half}L_ip_ip_{i+\half}p_{i-\half}&
=p_{i+\half}p_iL_ip_{i-\half}p_{i+\half}
=p_{i+\half}p_ip_{i-\half}L_{i-1}p_ip_{i-\half}p_{i+\half}\\
&=p_{i+\half}p_iL_{i-1}p_{i-\half}p_{i+\half}
=L_{i-1}p_{i-\half}p_{i+\half}.
\end{align*}
Next,
\begin{align*}
s_iL_{i-\half}s_ip_{i-\half}&=
-s_iL_{i-1}p_{i-\half}s_ip_{i-\half}
-s_ip_{i-\half}L_{i-1}s_ip_{i-\half}
+s_ip_{i-\half}L_{i-1}p_{i-1}p_{i-\half}s_ip_{i-\half}\\
&\quad+s_is_{i-1}L_{i-\thalf}s_{i-1}s_ip_{i-\half}
+s_i\sigma_{i-\half}s_ip_{i-\half}\\
&=-s_iL_{i-1}p_{i-\half}p_{i+\half}
-s_ip_{i-\half}L_{i-1}s_ip_{i-\half}
+s_ip_{i-\half}L_{i-1}p_{i-1}p_{i-\half}p_{i+\half}\\
&\quad+s_is_{i-1}L_{i-\thalf}s_{i-1}s_ip_{i-\half}
+s_is_{i-1}\sigma_{i-\half}s_{i-1}s_ip_{i-\half}\\
&=-L_{i-1}p_{i-\half}p_{i+\half}
-s_ip_{i-\half}L_{i-1}s_ip_{i-\half}
+p_{i-\half}L_{i-1}p_{i-1}p_{i-\half}p_{i+\half}\\
&\quad+s_is_{i-1}L_{i-\thalf}p_{i+\half}s_{i-1}s_i
+s_is_{i-1}\sigma_{i-\half}p_{i+\half}s_{i-1}s_i.
\end{align*}
Substituting each of the above into \eqref{z-a-2}, and using part~\eqref{a-b-1} of Theorem~\ref{a-b} gives~\eqref{z-a-1}. The right hand side of \eqref{z-a-1} being fixed under the $*$ anti-involution on $\mathcal{A}_{i+\half}(z)$, the proof of~\eqref{a-c-7} is complete.\newline 
\eqref{a-c-8} We show that, after substituting the expression
\begin{align*}
L_{i-\half}=-p_{i-\half}L_{i-1}-L_{i-1}p_{i-\half}+p_{i-\half}L_{i-1}p_{i-1}p_{i-\half}+s_{i-1}L_{i-\half}s_{i-1}+\sigma_{i-\half}
\end{align*}
into the definition~\eqref{jm-i-2}, conjugation by $s_{i-1}$ permutes the summands of $L_{i+\half}$ as follows:
\begin{enumerate}[label=(\roman{*}), ref=\roman{*},leftmargin=0pt,itemindent=1.5em]
\item $s_{i-1}(p_{i+\half}p_{i-\half}L_{i-1}s_{i-1})s_{i-1}
=p_{i+\half}p_{i-\half}L_{i-1}p_{i}p_{i-\half}p_ip_{i+\half}$, \label{c-1-i}
\item $s_{i-1}p_{i+\half}s_{i-1}L_{i-1}p_{i-\half}s_{i-1}
=s_ip_{i-\half}L_{i-1}s_ip_{i-\half}$, \label{c-1-ii}
\item $s_{i-1}p_{i+\half}p_{i-\half}L_{i-1}p_ip_{i-\half}s_{i-1}
=p_{i+\half}p_{i-\half}L_{i-1}p_ip_{i-\half}$, \label{c-1-iii}
\item $s_{i-1}(p_{i+\half}s_{i-1}L_{i-1}s_{i-1})s_{i-1}
=s_ip_{i-\half}L_{i-1}s_{i}$,\label{c-1-iv}
\item $s_{i-1}p_{i+\half}\sigma_is_{i-1}=\sigma_ip_{i+\half}$,\label{c-1-v}
\item $s_{i-1}p_{i+\half}p_{i-\half}L_{i-1}s_{i-1}p_ip_{i+\half}s_{i-1}
=p_{i+\half}p_{i-\half}L_{i-1}s_{i-1}p_ip_{i+\half}$, \label{c-1-vi}
\item $s_{i-1}p_{i+\half}s_{i-1}L_{i-1}p_{i-\half}p_ip_{i+\half}s_{i-1}
=s_ip_{i-\half}p_ip_{i+\half}s_{i-1}L_{i-1}p_{i-\half}s_i$,\label{c-1-vii}
\item $s_{i-1}p_{i+\half}s_{i-1}L_{i-1}s_{i-1}p_ip_{i+\half}s_{i-1}
=s_ip_{i-\half}L_{i-1}p_{i-1}p_{i-\half}s_i$,\label{c-1-viii}
\item $s_{i-1}p_{i-\half}L_{i-1}s_{i-1}p_{i+\half}p_{i}p_{i-\half}s_{i-1}
=p_{i-\half}L_{i-1}s_{i-1}p_{i+\half}p_{i}p_{i-\half}$. \label{c-1-ix}
\end{enumerate}
The left hand side of~\eqref{c-1-i} is
\begin{align*}
s_{i-1}(p_{i+\half}p_{i-\half}L_{i-1}s_{i-1})s_{i-1}
&=p_{i+\half}p_{i-\half}L_{i-1}
\end{align*}
and the right hand side of~\eqref{c-1-i} is
\begin{align*}
p_{i+\half}p_{i-\half}L_{i-1}p_ip_{i-\half}p_ip_{i+\half}
=p_{i+\half}p_{i-\half}L_{i-1}p_ip_{i+\half}
=p_{i+\half}p_{i-\half}L_{i-1}.
\end{align*}
The left hand side of~\eqref{c-1-ii} is 
\begin{align*}
s_{i-1}p_{2+\half}s_{i-1}L_{i-1}p_{i-\half}s_{i-1}
=s_{i}p_{i-\half}s_{i}L_{i-1}p_{i-\half}=s_{i}p_{i-\half}L_{i-1}s_{i}p_{i-\half},
\end{align*}
which is the same as the right hand side of~\eqref{c-1-ii}. The left hand side of~\eqref{c-1-iii} is
\begin{align*}
s_{i-1}p_{i+\half}p_{i-\half}L_{i-1}p_ip_{i-\half}s_{i-1}=p_{i+\half}p_{i-\half}L_{i-1}p_ip_{i-\half},
\end{align*} 
which is the same as the right hand side of~\eqref{c-1-iii}. The left hand side of~\eqref{c-1-iv} is 
\begin{align*}
s_{i-1}(p_{i+\half}s_{i-1}L_{i-1}s_{i-1})s_{i-1}=s_{i}p_{i-\half}s_{i}L_{i-1}=s_{i}p_{i-\half}L_{i-1}s_{i},
\end{align*}
which is the same as the right hand side of~\eqref{c-1-iv}. The left hand side of~\eqref{c-1-v}  is 
\begin{align*}
s_{i-1}p_{i+\half}\sigma_is_{i-1}
=s_{i-1}p_{i+\half}\sigma_{i-\half}
=s_{i-1}\sigma_{i-\half}p_{i+\half}
=\sigma_{i}p_{i+\half},
\end{align*}
which is the same as the right hand side of~\eqref{c-1-v}. The left hand side of~\eqref{c-1-vi} is 
\begin{align*}
s_{i-1}p_{i+\half}p_{i-\half}L_{i-1}s_{i-1}p_ip_{i+\half}s_{i-1}
&=p_{i+\half}p_{i-\half}L_{i-1}s_{i-1}p_is_{i-1}s_ip_{i-\half}s_{i}\\
&=p_{i+\half}p_{i-\half}L_{i-1}p_{i-1}s_ip_{i-\half}s_{i}\\
&=p_{i+\half}p_{i-\half}L_{i-1}p_{i-1}p_{i-\half},
\end{align*}
which is the same as the right hand side of~\eqref{c-1-vi}.  The left hand side of~\eqref{c-1-vii} is 
\begin{align*}
&s_{i-1}p_{i+\half}s_{i-1}L_{i-1}p_{i-\half}p_ip_{i+\half}s_{i-1}
=s_{i}p_{i-\half}s_{i}L_{i-1}p_{i-\half}p_ip_{i+\half}s_{i-1}\\
&\quad=s_{i}p_{i-\half}L_{i-1}s_{i}p_{i-\half}p_ip_{i+\half}s_{i-1}
=s_{i}p_{i-\half}p_i\sigma_{i}s_{i-1}p_{i+\half}s_{i-1}s_ip_ip_{i+\half}s_{i-1}\\
&\quad=s_{i}p_{i-\half}p_i\sigma_{i-\half}p_{i+\half}s_{i-1}s_ip_is_{i-1}s_ip_{i-\half}s_{i}
=s_{i}p_{i-\half}p_i\sigma_{i-\half}p_{i+\half}p_ip_{i-\half}s_{i}\\
&\quad=s_{i}p_{i-\half}p_ip_{i+\half}\sigma_{i-\half}p_ip_{i-\half}s_{i}
=s_{i}p_{i-\half}p_ip_{i+\half}s_{i-1}L_{i-1}p_{i-\half}s_{i},
\end{align*}
which is the same as the right hand side of~\eqref{c-1-vii}. The left hand side of~\eqref{c-1-viii} is 
\begin{align*}
s_{i-1}p_{i+\half}s_{i-1}L_{i-1}s_{i-1}p_ip_{i+\half}s_{i-1}
&=s_{i}p_{i-\half}s_{i}L_{i-1}s_{i-1}p_is_{i-1}s_ip_{i-\half}s_{i}\\
&=s_{i}p_{i-\half}L_{i-1}s_{i}s_{i-1}p_is_{i-1}s_ip_{i-\half}s_{i}\\
&=s_{i}p_{i-\half}L_{i-1}p_{i-1}p_{i-\half}s_{i},
\end{align*}
which is the same as the right hand side of~\eqref{c-1-viii}. Since the statement~\eqref{c-1-ix} is evident from the relation $s_{i-1}p_{i-\half}=p_{i-\half}s_{i-1}=p_{i-\half}$, the proof of part~\eqref{a-c-8} is complete. \newline
\eqref{a-c-9} By parts~\eqref{a-c-6} and~\eqref{a-c-8}, $L_{i+\half}$ commutes with $\langle p_i,s_{i-1}\rangle$, and so with $p_{i-1}=s_{i-1}p_is_{i-1}$. 
\end{proof}
\begin{theorem}\label{s-1}
If $i=1,2,\ldots,$ then 
\begin{enumerate}[label=(\arabic{*}), ref=\arabic{*},leftmargin=0pt,itemindent=1.5em]
\item $L_{i+1}$ commutes with $\mathcal{A}_{i+\half}(z)$, and $\sigma_{i+1}$ commutes with $\mathcal{A}_{i-\half}(z)$,\label{s-1-1}
\item $L_{i+\half}$ commutes with $\mathcal{A}_{i}(z)$, and $\sigma_{i+\half}$ commutes with $\mathcal{A}_{i-1}(z)$.\label{s-1-2}
\end{enumerate}
Consequently, the family of elements $(L_i,L_{i+\half}\,:\,i=0,1,\ldots)$ is pairwise commutative.
\end{theorem}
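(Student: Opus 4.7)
The plan is to establish statements~\eqref{s-1-1} and~\eqref{s-1-2} by a simultaneous induction on $i$, and then derive the pairwise commutativity as a formal corollary. The base cases follow directly from the initial data $L_0=0$, $L_1=p_1$, $\sigma_1=1$, $\sigma_2=s_1$, $L_\half=0$, $\sigma_\half=\sigma_{1+\half}=1$, together with the triviality (or near-triviality) of the small subalgebras $\mathcal{A}_0(z)$ and $\mathcal{A}_\half(z)$.

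For the inductive step, fix $i$ and assume the theorem holds for all smaller indices. Each of the subalgebras $\mathcal{A}_{i+\half}(z)$, $\mathcal{A}_i(z)$, $\mathcal{A}_{i-\half}(z)$, $\mathcal{A}_{i-1}(z)$ is generated, by Theorem~\ref{hr-presentation}, by a list of Coxeter generators $s_j$ and idempotents $p_j, p_{j+\half}$. Commutation with the ``close'' generators---those of index within two of $i$---has already been supplied by Theorems~\ref{a-b} and~\ref{a-c}. For a remaining generator $x$ (of index at most $i-3$), I would inspect the defining recursions~\eqref{jm-i-1}, \eqref{sigma-i-1}, \eqref{jm-i-2}, and~\eqref{sigma-i-3}: every summand is a product in which each factor is either a Coxeter or contraction symbol with index in $\{i-1,i,i+1\}$, or one of $L_{i-1}, L_i, L_{i-\half}, \sigma_i, \sigma_{i-\half}$. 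The presentation gives $[x,s_{i-1}]=[x,s_i]=[x,p_{i\pm\half}]=[x,p_i]=[x,p_{i+1}]=0$ trivially, since the indices in each pair differ by at least two; and the inductive hypothesis gives $[x,L_{i-1}]=[x,L_i]=[x,L_{i-\half}]=[x,\sigma_i]=[x,\sigma_{i-\half}]=0$, since $x$ lies in a sufficiently small subalgebra such as $\mathcal{A}_{i-\thalf}(z)$. Pushing $x$ through each summand therefore yields commutation with $L_{i+1}, \sigma_{i+1}, L_{i+\half}, \sigma_{i+\half}$. A minor variant of the argument treats the few ``intermediate'' generators, for instance $p_{i-\thalf}$ against $L_{i+1}$, which are not explicitly covered by Theorems~\ref{a-b} and~\ref{a-c}: in each such case $x$ commutes with every factor of the recursion by a combination of the presentation and the relevant item of Theorem~\ref{a-b}.

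For the final clause, each $L_m$ lies in $\mathcal{A}_m(z)$ by the remark following~\eqref{sigma-i-3}. If $j<k$, then $L_j\in\mathcal{A}_j(z)$ is contained in the centraliser of $L_k$ supplied by statement~\eqref{s-1-1} (when $k$ is a positive integer, using $\mathcal{A}_{k-\half}(z)$) or~\eqref{s-1-2} (when $k$ is a half-integer, using $\mathcal{A}_{k-\half}(z)=\mathcal{A}_m(z)$ with $k=m+\half$), so $L_j$ and $L_k$ commute. The main obstacle is bookkeeping rather than algebraic depth: the four statements are mutually dependent through the recursions, and one must interleave them in the induction so that, when extending $\sigma_{i+1}$ or $L_{i+1}$ past a far generator, the required inductive hypotheses on $\sigma_i, L_{i-1}, L_{i-\half}$ and their half-integer counterparts are in hand at exactly the right level.
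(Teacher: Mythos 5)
Your proposal is correct and follows essentially the same route as the paper: reduce each commutation claim to the generators of the relevant subalgebra, dispose of the nearby generators via Theorems~\ref{a-b} and~\ref{a-c}, and push the remaining (far and intermediate) generators through the recursions \eqref{jm-i-1}--\eqref{sigma-i-2} using the presentation together with an interleaved induction on the four statements. The deduction of pairwise commutativity of the $L$'s from $L_m\in\mathcal{A}_m(z)$ is likewise the intended one.
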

\begin{proof}
\eqref{s-1-1} Observe that $L_1$ commutes with $\mathcal{A}_{1-\half}(z)$ and $L_{2}$ commutes with $\mathcal{A}_{1+\half}(z)$, while $\sigma_2$ commutes with $\mathcal{A}_{1-\half}(z)$ and $\sigma_3$ commutes with $\mathcal{A}_{2-\half}(z)$. Since
\begin{align*}
&\text{$L_{i+1}$ commutes with $\langle s_{i-1},p_{i-1},p_{i},p_{i-\half},p_{i+\half}\rangle$, if $i\ge2$,  and }\\
&\text{$\sigma_{i+1}$ commutes with $\langle s_{i-2},p_{i-2},p_{i-1},p_{i-\thalf},p_{i-\half}\rangle$, if $i\ge3$,}
\end{align*}
it suffices to show that 
\begin{enumerate}[label=(\roman{*}), ref=\roman{*}]
\item $L_{i+1}$ commutes with $\mathcal{A}_{i-\thalf}(z)$, if $i\ge2$, and \label{s-1-1-1}
\item $\sigma_{i+1}$ commutes with $\mathcal{A}_{i-\frac{5}{2}}(z)$, if $i\ge3$.\label{s-1-1-2}
\end{enumerate}
If $i\ge2$, then induction on $i$ shows that $L_{i+1}$ commutes with $\mathcal{A}_{i-2}(z)$, while, if $i\ge3$, the fact that $L_{i+1}$ commutes with $p_{i-\thalf}$ follows from induction on $i$, and the fact that $\sigma_{i+1}$ commutes with $p_{i-\thalf}$. Similarly, if $i\ge3$, then induction on $i$ shows that $\sigma_{i+1}$ commutes with $\mathcal{A}_{i-3}(z)$, and that, if $i\ge4$, then $\sigma_{i+1}$ commutes with $p_{i-\frac{5}{2}}$. \newline
\eqref{s-1-2}  Observe that $L_{0+\half}$ commutes with $\mathcal{A}_{0}(z)$ and $L_{1+\half}$ commutes with $\mathcal{A}_{1}(z)$, while $\sigma_{1+\half}$ commutes with $\mathcal{A}_{0}(z)$ and $\sigma_{2+\half}$ commutes with $\mathcal{A}_{1}(z)$. Since 
\begin{align*}
&\text{$L_{i+\half}$ commutes with $\langle s_{i-1}, p_{i-1}, p_i, p_{i-\half}\rangle$, if $i\ge2$,  and }\\
&\text{$\sigma_{i+\half}$ commutes with $\langle s_{i-2},p_{i-2},p_{i-1},p_{i-\thalf}\rangle$, if $i\ge3$,}
\end{align*}
it suffices to show that 
\begin{enumerate}[label=(\roman{*}), ref=\roman{*}]
\item $L_{i+\half}$ commutes with $\mathcal{A}_{i-\thalf}(z)$, if $i\ge2$, and \label{s-1-1-3}
\item $\sigma_{i+\half}$ commutes with $\mathcal{A}_{i-\frac{5}{2}}(z)$, if $i\ge3$.\label{s-1-1-4}
\end{enumerate}
If $i\ge2$, then induction on $i$ shows that $L_{i+\half}$ commutes with $\mathcal{A}_{i-2}(z)$, while, if $i\ge3$, the fact that $L_{i+\half}$ commutes with $p_{i-\thalf}$ follows from induction on $i$, and the fact that $\sigma_{i+\half}$ commutes with $p_{i-\thalf}$. Similarly, if $i\ge3$, then induction on $i$ shows that $\sigma_{i+\half}$ commutes with $\mathcal{A}_{i-3}(z)$, and that, if $i\ge4$, then $\sigma_{i+\half}$ commutes with $p_{i-\frac{5}{2}}$. 
\end{proof}
\begin{proposition}\label{jm:commute}
For $i=1,2,\dots,$ the following statements hold:
\begin{enumerate}[label=(\arabic{*}), ref=\arabic{*},leftmargin=0pt,itemindent=1.5em]
\item $(L_{i+\half}+L_{i+1})p_{i+1}=p_{i+1}(L_{i+\half}+L_{i+1})=z p_{i+1}$,\label{jm:commute:4.b}
\item $(L_{i}+L_{i+\half})p_{i+\half}=p_{i+\half}(L_{i}+L_{i+\half})=z p_{i+\half}$,\label{jm:commute:3}
\item $(L_{i-\half}+L_i+L_{i+\half}+L_{i+1})s_i=s_i(L_{i-\half}+L_i+L_{i+\half}+L_{i+1})$.\label{jm:commute:5}
\end{enumerate}
\end{proposition}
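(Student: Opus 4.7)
The plan is to prove parts~\eqref{jm:commute:4.b} and~\eqref{jm:commute:3} by simultaneous induction on $i$, and to deduce part~\eqref{jm:commute:5} directly from the recursive definitions together with Proposition~\ref{z-0}. The base case $i=1$ is a direct calculation: from $L_0=L_{\half}=0$, $L_1=p_1$, $\sigma_{1+\half}=1$, and $\sigma_2=s_1$ one obtains closed forms for $L_{1+\half}$ and $L_2$ via~\eqref{jm-i-1} and~\eqref{jm-i-2}, after which the three identities follow from the relations of Theorem~\ref{hr-presentation}.

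For~\eqref{jm:commute:3}, expand $L_{i+\half}p_{i+\half}$ using~\eqref{jm-i-2}. The summand $p_{i+\half}L_ip_{i+\half}$ collapses to $p_{i+\half}$ by Proposition~\ref{prel:a}\eqref{jm:comm:a0}; the summand $\sigma_{i+\half}p_{i+\half}$ equals $p_{i+\half}$ by combining Proposition~\ref{prel:a}\eqref{prel:a0} with Proposition~\ref{z-0} (since $\sigma_{i+1}=s_i\sigma_{i+\half}$); and the summand $L_ip_{i+\half}p_{i+\half}=L_ip_{i+\half}$ cancels against the $L_ip_{i+\half}$ contribution from the other side of the sum. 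What remains is
\[
(L_i+L_{i+\half})p_{i+\half}=p_{i+\half}L_ip_ip_{i+\half}+s_iL_{i-\half}s_ip_{i+\half},
\]
which must be shown to equal $zp_{i+\half}$. The second summand simplifies via $s_ip_{i+\half}=p_{i+\half}$ from~\ref{ide-iii}; using the identity $s_ip_{i-\half}s_i=s_{i-1}p_{i+\half}s_{i-1}$ from~\ref{com-vii}, together with the commutation of $L_{i-\half}$ with $\mathcal{A}_{i-1}$ from Theorem~\ref{s-1}\eqref{s-1-2}, it is rewritten in terms of $L_{i-\half}p_{i-\half}$, at which point the inductive hypothesis $(L_{i-1}+L_{i-\half})p_{i-\half}=zp_{i-\half}$ produces $zp_{i+\half}$ plus a residue. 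The first summand $p_{i+\half}L_ip_ip_{i+\half}$ is expanded by substituting~\eqref{jm-i-1} for $L_i$ and collapsed using Proposition~\ref{prel:a} together with Theorem~\ref{a-c}\eqref{a-c-3},~\eqref{a-c-7}, producing the cancelling residue. Part~\eqref{jm:commute:4.b} is handled by the parallel strategy: $L_{i+1}p_{i+1}$ is expanded via~\eqref{jm-i-1} and reduced using $p_{i+1}p_{i+\half}p_{i+1}=p_{i+1}$ from~\ref{con-i}, Proposition~\ref{prel:a}\eqref{prel:a2}, relation~\ref{com-vi} ($s_ip_is_i=p_{i+1}$), and the easy commutation of $L_i$ with $p_{i+1}$ (immediate from the generators involved in $L_i$); applying the already-proved~\eqref{jm:commute:3} then yields $zp_{i+1}$.

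Part~\eqref{jm:commute:5} requires no induction. Conjugating~\eqref{jm-i-1} and~\eqref{jm-i-2} by $s_i$, using $s_ip_{i+\half}=p_{i+\half}=p_{i+\half}s_i$ throughout and $s_i\sigma_{i+\half}s_i=\sigma_{i+\half}$ (from Proposition~\ref{z-0}), yields expressions for $s_iL_{i+1}s_i$ and $s_iL_{i+\half}s_i$ that differ from $L_{i+\half}$ and $L_{i+1}$ only by terms involving $L_i$, $L_{i-\half}$, $p_{i+\half}L_i(p_{i+1}-p_i)p_{i+\half}$, and $\sigma_{i+1}-\sigma_{i+\half}$. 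Adding the two resulting relations causes the $p_{i+\half}$-sandwich and $\sigma$ discrepancies to cancel, leaving
\[
L_{i-\half}+L_i+L_{i+\half}+L_{i+1}=s_i\bigl(L_{i-\half}+L_i+L_{i+\half}+L_{i+1}\bigr)s_i,
\]
which is equivalent to the claimed commutation. The main obstacle is the bookkeeping in~\eqref{jm:commute:3}: the ten terms arising from $(L_i+L_{i+\half})p_{i+\half}$ must be systematically collapsed via Proposition~\ref{prel:a}, Theorem~\ref{a-b}, and Theorem~\ref{a-c}, in a manner structurally identical to the proofs of Theorem~\ref{a-b}\eqref{a-b-3} and Theorem~\ref{a-c}\eqref{a-c-3}--\eqref{a-c-7}. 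No new techniques are required, but locating the correct sequence of rewrites is delicate.
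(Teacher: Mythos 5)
Your proposal follows essentially the same route as the paper: expand the recursions~\eqref{jm-i-1} and~\eqref{jm-i-2}, cancel the resulting terms via Proposition~\ref{prel:a} and Proposition~\ref{z-0}, close the induction from the base case $(L_{\half}+L_1)p_1=zp_1$, and obtain part~(\ref{jm:commute:5}) by conjugating the recursions by $s_i$ using $\sigma_{i+1}s_i=s_i\sigma_{i+1}=\sigma_{i+\half}$. One small correction: the term that actually survives the cancellations in both (\ref{jm:commute:4.b}) and (\ref{jm:commute:3}) is $s_i(L_{i-\half}+L_i)p_is_i$, which is closed by part~(\ref{jm:commute:4.b}) at index $i-1$ (the paper's identity~\eqref{vin:1}), not by part~(\ref{jm:commute:3}) at index $i$ as you indicate --- but since your simultaneous induction carries both hypotheses at the lower index, the argument still goes through.
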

\begin{proof}
\eqref{jm:commute:4.b} It suffices to observe that:
\begin{enumerate}[label=(\roman{*}), ref=\roman{*},leftmargin=0pt,itemindent=1.5em]
\item $L_ip_{i+\half}p_{i+1}=\sigma_{i+1}p_{i+1}$,
\item $p_{i+\half}L_ip_{i+1}=p_{i+\half}L_ip_{i+1}p_{i+\half}p_{i+1}$,
\item $p_{i+\half}L_ip_{i}p_{i+\half}p_{i+1}=p_{i+\half}L_is_ip_{i+1}$,
\item $s_iL_{i-\half}s_ip_{i+1}+s_iL_is_ip_{i+1}=z p_{i+1}$,\label{j-m-c-iv}
\item $\sigma_{i+\half}p_{i+1}=s_iL_ip_{i+\half}p_{i+1}$.
\end{enumerate}
With the exception of~\eqref{j-m-c-iv}, each of the statements above is evident from the defining relations or from what we have already shown. Since $(L_\half+L_1)p_1=zp_1$,
\begin{align}\label{vin:1}
s_iL_{i-\half}s_ip_{i+1}+s_iL_is_ip_{i+1}=s_i(L_{i-\half}p_i+L_ip_i)s_i=z s_ip_is_i=z p_{i+1},
\end{align}
gives \eqref{j-m-c-iv} by induction.\newline 
\eqref{jm:commute:3} Using the expression~\eqref{vin:1}, we have 
\begin{align*}
zp_{i+\half}&=zp_{i+\half}p_{i+1}p_{i+\half}\\
&=p_{i+\half}(s_iL_{i-\half}s_ip_{i+1}+s_iL_is_ip_{i+1})p_{i+\half}\\
&=p_{i+\half}L_{i-\half}s_ip_{i+1}p_{i+\half}+p_{i+\half}L_is_ip_{i+1}p_{i+\half}\\
&=L_{i-\half}p_{i+\half}p_{i}p_{i+\half}+p_{i+\half}L_ip_{i}p_{i+\half}\\
&=L_{i-\half}p_{i+\half}+p_{i+\half}L_ip_{i}p_{i+\half}, 
\end{align*}
which yields
\begin{align*}
(L_i+L_{i+\half})p_{i+\half}&=L_ip_{i+\half}-p_{i+\half}L_ip_{i+\half}-L_ip_{i+\half}+p_{i+\half}L_ip_ip_{i+\half}\\
&\qquad+s_{i}L_{i-\half}p_{i+\half}+\sigma_{i+\half}p_{i+\half}\\
&=-p_{i+\half}+zp_{i+\half}-L_{i-\half}p_{i+\half}+L_{i-\half}p_{i+\half}+p_{i+\half}\\
&=zp_{i+\half}. 
\end{align*}
\eqref{jm:commute:5} Since $\sigma_{i+1}s_i=\sigma_{i+\half}=s_i\sigma_{i+1}$, we have
\begin{align*}
(L_{i-\half}+L_i+L_{i+\half}+L_{i+1})s_i&=L_{i-\half}s_i+L_is_i-p_{i+\half}L_is_i-L_ip_{i+\half}+p_{i+\half}L_ip_ip_{i+\half}\\
&\quad +s_iL_{i-\half}+\sigma_{i+\half}s_i-p_{i+\half}L_i-s_iL_ip_{i+\half}\\
&\quad+p_{i+\half}L_ip_{i+1}p_{i+\half}+s_iL_i+\sigma_{i+1}s_i\\
&\quad=s_i(L_{i-\half}+L_i+L_{i+\half}+L_{i+1}),
\end{align*}
as required.
\end{proof}
\begin{theorem}\label{cent}
If $k=0,1,\ldots,$ then 
\begin{enumerate}[label=(\arabic{*}), ref=\arabic{*},leftmargin=0pt,itemindent=1.5em]
\item the element $z_{k+\half}=L_{\half}+L_{1}+L_{1+\half}+\cdots+L_{k+\half}$ is central in $\mathcal{A}_{k+\half}(z)$,\label{cent-1}
\item the element $z_{k+1}=L_{\half}+L_{1}+L_{1+\half}+\cdots+L_{k+1}$ is central in $\mathcal{A}_{k+1}(z)$.\label{cent-2}
\end{enumerate}
\end{theorem}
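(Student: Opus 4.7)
The plan is to prove~\eqref{cent-1} and~\eqref{cent-2} simultaneously by induction on the index, running $n$ through $0,\half,1,\tfrac{3}{2},2,\ldots$. The base cases are immediate: $\mathcal{A}_{\half}(z)=R$ with $z_{\half}=0$, and $\mathcal{A}_1(z)$ is commutative (generated by the single element $p_1$ with $p_1^2=zp_1$), so $z_1=L_\half+L_1=p_1$ is central.

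For the inductive step, I use the fact that $\mathcal{A}_{k+\half}(z)$ is generated as an algebra by $\mathcal{A}_k(z)$ together with the single extra generator $p_{k+\half}$, and that $\mathcal{A}_{k+1}(z)$ is generated by $\mathcal{A}_{k+\half}(z)$ together with $s_k$ and $p_{k+1}$. Hence it suffices to verify that $z_{k+\half}$ commutes with $\mathcal{A}_k(z)$ and with $p_{k+\half}$, and that $z_{k+1}$ commutes with $\mathcal{A}_{k+\half}(z)$ and with the new generators $s_k$ and $p_{k+1}$. Commutation with the smaller algebra follows immediately from Theorem~\ref{s-1} and the inductive hypothesis: writing $z_{k+\half}=z_k+L_{k+\half}$, the first summand is central in $\mathcal{A}_k(z)$ by induction and the second commutes with $\mathcal{A}_k(z)$ by Theorem~\ref{s-1}~\eqref{s-1-2}; similarly $z_{k+1}=z_{k+\half}+L_{k+1}$ commutes with $\mathcal{A}_{k+\half}(z)$ using the statement just proved together with Theorem~\ref{s-1}~\eqref{s-1-1}.

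For commutation with the new generators, the strategy is to split $z$ into an ``active block'' governed by Proposition~\ref{jm:commute} and a ``passive remainder'' lying in a smaller subalgebra which commutes with the new generator by the Halverson--Ram relations of Theorem~\ref{hr-presentation}. For $p_{k+\half}$, decompose $z_{k+\half}=z_{k-\half}+(L_k+L_{k+\half})$; Proposition~\ref{jm:commute}~\eqref{jm:commute:3} gives $(L_k+L_{k+\half})p_{k+\half}=zp_{k+\half}=p_{k+\half}(L_k+L_{k+\half})$, while $z_{k-\half}\in\mathcal{A}_{k-\half}(z)$ commutes with $p_{k+\half}$ because $p_{k+\half}$ commutes with each generator of $\mathcal{A}_{k-\half}(z)$ by the commutation relations of Theorem~\ref{hr-presentation}. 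For $s_k$, decompose $z_{k+1}=z_{k-\thalf}+(L_{k-\half}+L_k+L_{k+\half}+L_{k+1})$ and apply Proposition~\ref{jm:commute}~\eqref{jm:commute:5} with $i=k$ to the quadruple; the remainder lies in $\mathcal{A}_{k-\thalf}(z)$, and $s_k$ commutes with each of its generators by the Coxeter relations and the relevant commutation relations. For $p_{k+1}$, decompose $z_{k+1}=(z_{k-\half}+L_k)+(L_{k+\half}+L_{k+1})$; the bracketed pair is handled by Proposition~\ref{jm:commute}~\eqref{jm:commute:4.b}, and $p_{k+1}$ commutes with every generator of $\mathcal{A}_k(z)$, hence with $z_{k-\half}+L_k$, by the commutation relations.

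The only real obstacle is bookkeeping: the substantive content has been encoded in Proposition~\ref{jm:commute}, and the auxiliary commutations of $p_{k+\half}$, $s_k$, and $p_{k+1}$ with the respective smaller subalgebras are routine verifications directly from the presentation.
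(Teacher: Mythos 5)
Your argument is correct and follows essentially the same route as the paper's: both rest on splitting the candidate central element into the block handled by Proposition~\ref{jm:commute} plus a remainder lying in a subalgebra that commutes with the generator in question (the paper checks every generator of $\mathcal{A}_{k+\half}(z)$ directly using such three-part decompositions rather than inducting on $k$, but that is only a difference of bookkeeping). One small correction: in the $s_k$ step the remainder should be $z_{k-1}$, not $z_{k-\thalf}$ (your decomposition as written drops the term $L_{k-1}$); the fix is harmless, since $s_k$ commutes with every generator of $\mathcal{A}_{k-1}(z)$ and hence with $z_{k-1}$.
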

\begin{proof}
\eqref{cent-1} We first show that $z_{k+\half}$ commutes with $p_1,\ldots p_{k}$ and $p_{1+\half},\ldots,p_{k+\half}$. If $i=1,\ldots,k$, then $p_i$ commutes with $z_{i-1}$ and with  $L_{i+\half}+L_{i+1}+\cdots+L_{k+\half}$. Since $p_i$ also commutes with $L_{i-\half}+L_{i}$, it follows that $p_{i}$ commutes with $z_{k+\half}$. Similarly, since $p_{i+\half}$ commutes with $z_{i-\half}$, $L_{i}+L_{i+\half}$, and with $L_{i+1}+L_{i+\thalf}+\ldots L_{k+\half}$, it follows that $p_{i+\half}$ commutes with $z_{k+\half}$. Next, suppose that $i=1,\ldots,k-1$. Since $s_i$ commutes with $z_{i-1}$, $L_{i-\half}+L_i+L_{i+\half}+L_{i+1}$, and with $L_{i+\thalf}+L_{i+2}+\cdots+L_{k+\half}$, it is evident that $s_i$ commutes with $z_{k+\half}$.  As $\mathcal{A}_{k+\half}(z)$ is generated by $p_1,\ldots,p_k$, $p_{1+\half},\ldots,p_{k+\half}$, and $s_1,\ldots,s_{k-1}$, we conclude that $z_{k+\half}$ is central in $\mathcal{A}_{k+\half}(z)$. \newline
\eqref{cent-2} Given that $z_{k+\half}$ is central in $\mathcal{A}_{k+\half}(z)$, it suffices to observe that $s_{k}$ commutes with $z_{k+1}$. 
\end{proof} 
\begin{proposition}\label{c-e}
For $i=1,2,\ldots,$ the following statements hold:
\begin{enumerate}[label=(\arabic{*}), ref=\arabic{*},leftmargin=0pt,itemindent=1.5em]
\item $(L_i+L_{i+\half}+L_{i+1})\sigma_{i+1}=\sigma_{i+1}(L_i+L_{i+\half}+L_{i+1})$,\label{c-e-1}
\item $(L_{i-\half}+L_i+L_{i+\half})\sigma_{i+\half}=\sigma_{i+\half}(L_{i-\half}+L_i+L_{i+\half})$.\label{c-e-2}
\end{enumerate}
\begin{proof}
\eqref{c-e-1} Since $\sigma_{i+1}$ commutes with 
\begin{align*}
z_{i+1}=z_{i-1}+L_{i-\half}+L_{i}+L_{i+\half}+L_{i+1},
\end{align*}
and $\sigma_{i+1}$ also commutes with $z_{i-1}+L_{i-\half}\in\mathcal{A}_{i-\half}(z)$, it follows that $\sigma_{i+1}$ commutes with $L_{i}+L_{i+\half}+L_{i+1}$. \newline
\eqref{c-e-2} Given that $\sigma_{1+\half}=1$, we may suppose that $i=2,3,\ldots.$ Since $\sigma_{i+\half}$ commutes with 
\begin{align*}
z_{i+\half}=z_{i-\thalf}+L_{i-1}+L_{i-\half}+L_i+L_{i+\half},
\end{align*}
and $\sigma_{i+\half}$ also commutes with $z_{i-\thalf}+L_{i-1}\in\mathcal{A}_{i-1}(z)$, it follows that $\sigma_{i+\half}$ commutes with $L_{i-\half}+L_{i}+L_{i+\half}$. 
\end{proof}
\end{proposition}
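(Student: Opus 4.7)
The plan is to exploit the centrality of the partial sums $z_{k+1}$ and $z_{k+\half}$ established in Theorem~\ref{cent}, combined with the locality results of Theorem~\ref{s-1} which place $\sigma_{i+1}$ and $\sigma_{i+\half}$ in small subalgebras of the tower.

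For part~\eqref{c-e-1}, I would split the central element $z_{i+1}$ as
\begin{align*}
z_{i+1}=(z_{i-1}+L_{i-\half})+(L_i+L_{i+\half}+L_{i+1}).
\end{align*}
Since $z_{i+1}$ is central in $\mathcal{A}_{i+1}(z)$ by Theorem~\ref{cent}\eqref{cent-2}, it commutes with $\sigma_{i+1}\in\mathcal{A}_{i+1}(z)$. The summand $z_{i-\half}=z_{i-1}+L_{i-\half}$ lies in $\mathcal{A}_{i-\half}(z)$, and by Theorem~\ref{s-1}\eqref{s-1-1} the element $\sigma_{i+1}$ commutes with everything in $\mathcal{A}_{i-\half}(z)$, so it commutes with $z_{i-\half}$ in particular. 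Subtracting these two commutation relations gives the commutativity of $\sigma_{i+1}$ with the remaining tail $L_i+L_{i+\half}+L_{i+1}$.

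For part~\eqref{c-e-2}, after first disposing of the trivial case $i=1$ (where $\sigma_{1+\half}=1$), the same mechanism applies at half--integer level: decompose
\begin{align*}
z_{i+\half}=(z_{i-\thalf}+L_{i-1})+(L_{i-\half}+L_i+L_{i+\half}),
\end{align*}
note that $\sigma_{i+\half}\in\mathcal{A}_{i+\half}(z)$ commutes with $z_{i+\half}$ by Theorem~\ref{cent}\eqref{cent-1}, and that $z_{i-1}=z_{i-\thalf}+L_{i-1}\in\mathcal{A}_{i-1}(z)$ commutes with $\sigma_{i+\half}$ by Theorem~\ref{s-1}\eqref{s-1-2}. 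Subtracting yields the claimed commutativity.

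No real obstacle is expected here: all the heavy lifting has already been done in Theorems~\ref{s-1} and~\ref{cent}. The only point to be careful about is the indexing boundary, i.e.\ that $z_{i-\half}$ (respectively $z_{i-1}$) actually lies in a subalgebra on which $\sigma_{i+1}$ (respectively $\sigma_{i+\half}$) is known to act centrally; this is exactly what Theorem~\ref{s-1} guarantees for $i\ge 1$ (respectively $i\ge 2$), matching the ranges in the statement.
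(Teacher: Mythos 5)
Your proposal is correct and follows essentially the same argument as the paper: both split the central element $z_{i+1}$ (resp.\ $z_{i+\half}$) into $z_{i-\half}$ (resp.\ $z_{i-1}$) plus the three-term tail, and invoke Theorem~\ref{cent} together with the locality statement of Theorem~\ref{s-1} before subtracting. No gaps.
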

\section{A Presentation for Partition Algebras}\label{a-1-3}
In this section, we rewrite the presentation for $\mathcal{A}_{k}(z)$ given by~\cite{HR:2005} in Theorem~\ref{hr-presentation} in terms of the elements  $\sigma_{i},\sigma_{i+\half},p_{i},p_{i+\half}$.  
\begin{theorem}\label{n-pres}
If $k=1,2,\ldots,$ then $\mathcal{A}_{k}(z)$ is the unital associative algebra presented by the generators 
\begin{align*}
p_1,p_{1+\half},p_2,\ldots,p_{k-\half},p_k,\sigma_2,\sigma_{2+\half},\sigma_3,\ldots,\sigma_{k-\half},\sigma_k,
\end{align*}
and the relations:
\begin{enumerate}
\item (Involutions)\label{inv}
\begin{enumerate}
\item $\sigma_{i+\half}^2=1$, for $i=2,\ldots,k-1$.\label{inv-1}
\item $\sigma_{i+1}^2=1$, for $i=1,\ldots,k-1$.\label{inv-2}
\end{enumerate}
\item (Braid--like relations) \label{brd}
\begin{enumerate}
\item $\sigma_{i+1}\sigma_{j+\half}=\sigma_{j+\half}\sigma_{i+1}$, if $j\ne i+1$.\label{brd-1}
\item $\sigma_{i}\sigma_{j}=\sigma_{j}\sigma_{i}$, if $j\ne i+1$.\label{brd-2}
\item $\sigma_{i+\half}\sigma_{j+\half}=\sigma_{j+\half}\sigma_{i+\half}$, if $j\ne i+1$. \label{brd-3}
\item $s_is_{i+1}s_i=s_{i+1}s_is_{i+1}$, for $i=1,\ldots,k-2$, where\label{brd-4}
\begin{align*}
s_\ell=
\begin{cases}
\sigma_{\ell+1},&\text{if $\ell=1$,}\\
\sigma_{\ell+\half}\sigma_{\ell+1},&\text{if $\ell=2,\ldots,k-1$,}
\end{cases}
\end{align*}
are the Coxeter generators for $\mathfrak{S}_k$.
\end{enumerate}
\item (Idempotent relations)\label{ide}
\begin{enumerate}
\item $p_i^2=zp_i$, for $i=1,\ldots,k$.\label{ide-1}
\item $p_{i+\half}^2=p_{i+\half}^2$, for $i=1,\ldots,k-1$.\label{ide-2}
\item $\sigma_{i+1}p_{i+\half}=p_{i+\half}\sigma_{i+1}=p_{i+\half}$, for $i=1,\ldots,k-1$.\label{ide-3}
\item $\sigma_{i+\half}p_{i+\half}=p_{i+\half}\sigma_{i+\half}=p_{i+\half}$, for $i=1,\ldots,k-1$. \label{ide-4}
\item $\sigma_{i+\half}p_{i}p_{i+1}=\sigma_{i+1}p_{i}p_{i+1}$, for $i=1,\ldots,k-1$.\label{ide-5}
\item $p_{i}p_{i+1}\sigma_{i+\half}=p_{i}p_{i+1}\sigma_{i+1}$, for $i=1,\ldots,k-1$.\label{ide-6}
\end{enumerate}
\item (Commutation relations)\label{com}
\begin{enumerate}
\item $p_ip_j=p_jp_i$, for $i,j=1,\ldots,k$.\label{com-1}
\item $p_{i+\half}p_{j+\half}=p_{j+\half}p_{i+\half}$, for $i,j=1,\ldots,k-1$.\label{com-2}
\item $p_{i+\half}p_j=p_jp_{i+\half}$, for $j\ne i,i+1$. \label{com-3}
\item $\sigma_ip_j=p_j\sigma_i$ if $j\ne i-1,i$. \label{com-4}
\item $\sigma_ip_{j+\half}=p_{j+\half}\sigma_i$, if $j\ne i$. \label{com-5}
\item $\sigma_{i+\half}p_j=p_j\sigma_{i+\half}$, if $j\ne i,i+1$. \label{com-6}
\item $\sigma_{i+\half}p_{j+\half}=p_{j+\half}\sigma_{i+\half}$, if $j\ne i-1$. \label{com-7}
\item $\sigma_{i+\half}p_i\sigma_{i+\half}=\sigma_{i+1}p_{i+1}\sigma_{i+1}$, for $i=1,\ldots,k-1$.\label{com-8}
\item $\sigma_{i+\half}p_{i-\half}\sigma_{i+\half}=\sigma_{i}p_{i+\half}\sigma_{i}$, for $i=2,\ldots,k-1$. \label{com-9}
\end{enumerate}
\item (Contraction relations)
\begin{enumerate}
\item $p_{i+\half}p_{j}p_{i+\half}=p_{i+\half}$, for $j=i,i+1$.   \label{con-1}
\item $p_ip_{j-\half}p_i=p_i$, for $j=i,i+1$. \label{con-2}
\end{enumerate}
\end{enumerate}
\end{theorem}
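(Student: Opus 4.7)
The strategy is to show that the abstract algebra $B_k$ defined by the new generators and relations of Theorem \ref{n-pres} is isomorphic to $\mathcal{A}_k(z)$ by constructing mutually inverse surjections.

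For the map $\phi:B_k\to \mathcal{A}_k(z)$, send each generator to the element of the same name in $\mathcal{A}_k(z)$, with $\sigma_{i+1}$ and $\sigma_{i+\half}$ interpreted via the recursive formulas of Section \ref{a-1-2}. Each relation of Theorem \ref{n-pres} must then be verified in $\mathcal{A}_k(z)$. The braid-like commutations (2a)--(2c) split into three cases: when the two elements lie in a pair of nested subalgebras of the tower, commutation follows from Theorem \ref{s-1}; when the two $\sigma$'s share the index $i$, it follows from Proposition \ref{z-0}; the remaining excluded indices are precisely those where the braid relation (2d) operates. The idempotent relations (3c), (3d) follow from Proposition \ref{prel:a}, while (3e), (3f) follow from substituting $\sigma_{i+\half}=s_i\sigma_{i+1}$ (Proposition \ref{z-0}) and invoking the Halverson--Ram relation $s_ip_ip_{i+1}=p_ip_{i+1}=p_ip_{i+1}s_i$. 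The commutation relations (4a)--(4g) are consequences of Theorems \ref{a-b}, \ref{a-c}, and \ref{s-1}, and the contractions (5a), (5b) are inherited directly from Theorem \ref{hr-presentation}. The Coxeter braid relation (2d) reduces, via Proposition \ref{z-0}, to the braid relation for the $s_i$ in $\mathfrak{S}_k$.

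For the inverse map $\psi:\mathcal{A}_k(z)\to B_k$, define $s_1=\sigma_2$ and $s_\ell=\sigma_{\ell+\half}\sigma_{\ell+1}$ for $\ell\ge 2$ in $B_k$, and send the Halverson--Ram generators $s_i,p_j,p_{j+\half}$ to these elements. Using $\sigma_{i+\half}\sigma_{i+1}=\sigma_{i+1}\sigma_{i+\half}$ from (2a) with $j=i$, together with the involutions (1), one obtains $s_i^2=1$; distant commutation of the $s_i$ follows from the remaining relations in (2). The identities $s_ip_{i+\half}=p_{i+\half}s_i=p_{i+\half}$ are immediate from (3c), (3d), and $s_ip_ip_{i+1}=p_ip_{i+1}s_i=p_ip_{i+1}$ follow from (3e), (3f) after multiplying by $\sigma_{i+\half}$ and using $\sigma_{i+\half}^2=1$. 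The central HR identity $s_ip_is_i=p_{i+1}$ follows by writing $s_ip_is_i=\sigma_{i+1}(\sigma_{i+\half}p_i\sigma_{i+\half})\sigma_{i+1}$ and applying (4h) together with $\sigma_{i+1}^2=1$; similarly, $s_ip_{i-\half}s_i=s_{i-1}p_{i+\half}s_{i-1}$ follows from (4i), using the commutations (4d)--(4g) to move the outer $\sigma_{i+1}$ and $\sigma_{i-\half}$ past $p_{i-\half}$ and $p_{i+\half}$ respectively.

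The main obstacle is the verification, in the forward direction, of the involutivity relations (1) and the conjugation relations (4h), (4i); none of these appears explicitly in Section \ref{a-1-2}. By Proposition \ref{z-0}, the pair (1a), (1b) collapses to a single identity such as $\sigma_{i+1}^2=1$, and analogously the pair (4h), (4i) collapses. The plan is to prove these by induction on $i$, expanding the squares (respectively the conjugates) using the recursion \eqref{sigma-i-1} and its alternative form \eqref{sigma-i-3}, and simplifying the resulting summands using Proposition \ref{prel:a} together with Theorems \ref{a-b} and \ref{a-c}; the $*$-invariance furnished by Proposition \ref{invariant} should substantially reduce the work by pairing symmetric summands.
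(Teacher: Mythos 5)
Your proposal is correct and follows essentially the same route as the paper: the forward verification rests on Propositions~\ref{prel:a} and~\ref{z-0} and Theorems~\ref{a-b}, \ref{a-c}, \ref{s-1}, the reverse direction recovers the Halverson--Ram relations by substituting $s_i=\sigma_{i+\half}\sigma_{i+1}$, and the ``main obstacle'' you isolate --- proving $\sigma_{i+1}^2=1$ by induction on the recursion~\eqref{sigma-i-1} with the cross-terms paired off via the $*$ anti-involution --- is precisely the content of the paper's Proposition~\ref{sigma-inv}. The only inessential difference is that the paper obtains (4h) and (4i) without any further induction, as immediate consequences of $s_ip_is_i=p_{i+1}$, $s_ip_{i-\half}s_i=s_{i-1}p_{i+\half}s_{i-1}$, Proposition~\ref{prel:a} and the involutivity, rather than by expanding the recursion again as you suggest.
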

\begin{proof}
We first show that the relations given in the statement above are a consequence the presentation given by by~\cite{HR:2005} in Theorem~\ref{hr-presentation}.\newline 
\eqref{inv-1}, \eqref{inv-2} Follow from Proposition~\ref{sigma-inv}.\newline
\eqref{brd-1} To see that $\sigma_{i+1}$ and $\sigma_{i+\half}$ commute, we use Proposition~\ref{z-0} and Proposition~\ref{sigma-inv} to obtain
\begin{align}
\sigma_{i+1}=\sigma_{i+\half}s_i=s_i\sigma_{i+\half}
&&\text{and}&&
\sigma_{i+\half}\sigma_{i+1}=\sigma_{i+1}\sigma_{i+\half}=s_i&&\text{(for $i=1,\ldots,k-1$)}\label{gen}
\end{align}
where $s_i$ is a Coxeter generator for $\mathfrak{S}_{k}\subset\mathcal{A}_{k}(z)$. Theorem~\ref{s-1} shows that $\sigma_{j}$ commutes with $\mathcal{A}_{j-\thalf}(z)$, and hence that $\sigma_j$ commutes with $\sigma_{1+\half},\ldots,\sigma_{j-\thalf}$. From Theorem~\ref{s-1}, $\sigma_{i+\half}$ commutes with $\mathcal{A}_{i-1}(z)$, and hence with $\sigma_{2},\ldots,\sigma_{i-1}$. In general, however, $\sigma_i$ and $\sigma_{i+\half}$ do not commute.\newline
\eqref{brd-2}, \eqref{brd-3}  Theorem~\ref{s-1} shows that $\sigma_{i+1}$ commutes with $\sigma_2,\ldots,\sigma_{i-1}$ and that $\sigma_{i+\half}$ commutes with $\sigma_{1+\half},\ldots,\sigma_{i-\thalf}$.\newline
\eqref{brd-4} Follows from~\eqref{gen}, whereby for $j=1,\ldots,k-1$, each product $\sigma_{j+\half}\sigma_{j+1}=s_j$ is a Coxeter generator for $\mathfrak{S}_k\subset\mathcal{A}_k(z)$.\newline
\eqref{ide-1}, \eqref{ide-2} Are included in the set of relations given by~\cite{HR:2005}. \newline
\eqref{ide-3}, \eqref{ide-4} That $\sigma_{i+1}p_{i+\half}=p_{i+\half}\sigma_{i+1}=p_{i+\half}$ is given in Proposition~\ref{prel:a}. Proposition~\ref{z-0} shows that $\sigma_{i+\half}p_{i+\half}=s_i\sigma_{i+1}p_{i+\half}=s_ip_{i+\half}=p_{i+\half}$.\newline 
\eqref{ide-5}, \eqref{ide-6} Proposition~\ref{z-0} and Proposition~\ref{sigma-inv} show that 
\begin{align*}
p_{i}p_{i+1}=s_ip_{i}p_{i+1}=\sigma_{i+\half}\sigma_{i+1}p_ip_{i+1}
&&\text{and}&&
\sigma_{i+\half}p_{i}p_{i+1}=\sigma_{i+1}p_{i}p_{i+1}.
\end{align*}
\eqref{com-1}--\eqref{com-3} Are included in the set of relations given by~\cite{HR:2005}. \newline
\eqref{com-4}, \eqref{com-5}  By Theorem~\ref{s-1}, $\sigma_i$ commutes with $\mathcal{A}_{i-\thalf}(z)$, and hence with $p_1,\ldots p_{i-2}$ and with $p_{1+\half},\ldots,p_{i-\thalf}$. Proposition~\ref{prel:a} shows that $\sigma_i$ commutes with $p_{i-\half}$.\newline
\eqref{com-6}, \eqref{com-7} By Theorem~\ref{s-1}, $\sigma_{i+\half}$ commutes with $\mathcal{A}_{i-1}(z)$, and hence with $p_1,\ldots p_{i-1}$ and with $p_{1+\half},\ldots,p_{i-\thalf}$. From~\eqref{ide-4}, it follows that $\sigma_{i+\half}$ commutes with $p_{i+\half}$. \newline 
\eqref{com-8} Proposition~\ref{z-0} and Proposition~\ref{sigma-inv} show that 
\begin{align*}
p_{i}=s_ip_{i+1}s_i=\sigma_{i+\half}\sigma_{i+1}p_{i+1}\sigma_{i+1}\sigma_{i+\half}
&&\text{and}&&
\sigma_{i+\half}p_{i}\sigma_{i+\half}=\sigma_{i+1}p_{i+1}\sigma_{i+1}.
\end{align*}
\eqref{com-9} Proposition~\ref{prel:a} shows that $p_{i-\half}s_i\sigma_i=s_i\sigma_ip_{i+\half}$. Proposition~\ref{z-0} and Proposition~\ref{sigma-inv}, together with the fact that $\sigma_{i+1}$ commutes with $p_{i-\half}$ give
\begin{align*}
p_{i-\half}\sigma_{i+1}\sigma_{i+\half}\sigma_i=\sigma_{i+1}\sigma_{i+\half}\sigma_ip_{i+\half}
&&\text{and}&&
p_{i-\half}\sigma_{i+\half}\sigma_i=\sigma_{i+\half}\sigma_ip_{i+\half}.
\end{align*}
Multiplying both sides of the last expression by $\sigma_i\sigma_{i+\half}$ on the left, and using Proposition~\ref{sigma-inv} once more shows that
\begin{align*}
\sigma_i\sigma_{i+\half}p_{i-\half}\sigma_{i+\half}\sigma_i=\sigma_i\sigma_{i+\half}\sigma_{i+\half}\sigma_ip_{i+\half}=p_{i+\half}
&&\text{and}&&
\sigma_{i+\half}p_{i-\half}\sigma_{i+\half}=\sigma_ip_{i+\half}\sigma_i,
\end{align*}
as required. \newline
\eqref{con-1}, \eqref{con-2} Are included in the set of relations given by~\cite{HR:2005}. 

Next, we derive the relations given by~\cite{HR:2005} in Theorem~\ref{hr-presentation} from the relations~\eqref{inv-1}--\eqref{con-2} above. \newline
(\ref{cox-0}\ref{cox-i}) By the relations \eqref{inv-1}, \eqref{inv-2} and \eqref{brd-1},  
\begin{align*}
\sigma_{i+1}\sigma_{i+\half}=\sigma_{i+\half}\sigma_{i+\half}
&&\text{and}&&
(\sigma_{i+\half}\sigma_{i+1})^2=1, &&\text{for $i=1,\ldots,k-1$.}
\end{align*}
Thus, writing $s_i=\sigma_{i+\half}\sigma_{i+1}$, for $i=1,\ldots,k-1$, we recover (\ref{cox-0}\ref{cox-i}).\newline
(\ref{cox-0}\ref{cox-ii}) If $j\ne i+1$, then, by~\eqref{brd-2} and~\eqref{brd-3},
\begin{align*}
s_is_j=\sigma_{i+\half}\sigma_{i+1}\sigma_{j+\half}\sigma_{j+1}=\sigma_{j+\half}\sigma_{j+1}\sigma_{i+\half}\sigma_{i+1}=s_js_i,
\end{align*}
as required. \newline 
(\ref{cox-0}\ref{cox-iii}) Is equivalent to~\eqref{brd-4} with $s_i=\sigma_{i+\half}\sigma_{i+1}$, for $i=1,\ldots,k-1$. \newline 
(\ref{ide-0}\ref{ide-i}), (\ref{ide-0}\ref{ide-ii}) Are identical to the relations~\eqref{ide-1} and~\eqref{ide-2}. \newline
(\ref{ide-0}\ref{ide-iii}) With $s_i=\sigma_{i+1}\sigma_{i+\half}$, the relations~\eqref{ide-3} and~\eqref{ide-4} give
\begin{align*}
s_ip_{i+\half}&=\sigma_{i+\half}\sigma_{i+1}p_{i+\half}=\sigma_{i+\half}p_{i+\half}=p_{i+\half}
\intertext{and}
p_{i+\half}s_i&=p_{i+\half}\sigma_{i+1}\sigma_{i+\half}=p_{i+\half}\sigma_{i+\half}=p_{i+\half},
\end{align*}
as required.\newline
(\ref{ide-0}\ref{ide-iv}) With $s_i=\sigma_{i+1}\sigma_{i+\half}$, the relations~\eqref{brd-1},~\eqref{ide-5} and~\eqref{ide-6} give
\begin{align*}
s_ip_ip_{i+1}=\sigma_{i+1}\sigma_{i+\half}p_ip_{i+1}=p_ip_{i+1}
&&\text{and}&&
p_{i}p_{i+1}s_i=p_{i}p_{i+1}\sigma_{i+1}\sigma_{i+\half}=p_{i}p_{i+1},
\end{align*}
as required. \newline
(\ref{com-0}\ref{com-i})-(\ref{com-0}\ref{com-iii}) Are identical to the relations~\eqref{com-1}-\eqref{com-3}. \newline
(\ref{com-0}\ref{com-iv}) If $j\ne i,i+1$, then the relations~\eqref{com-4} and~\eqref{com-6} give
\begin{align*}
s_ip_j=\sigma_{i+\half}\sigma_{i+1}p_j=\sigma_{i+\half}p_j\sigma_{i+1}=p_j\sigma_{i+\half}\sigma_{i+1}=p_js_i,
\end{align*}
as required.\newline
(\ref{com-0}\ref{com-v}) If $j\ne i-1,i+1$, then the relations~\eqref{com-5} and~\eqref{com-6} give 
\begin{align*}
s_ip_{j+\half}
=\sigma_{i+\half}\sigma_{i+1}p_{j+\half}
=\sigma_{i+\half}p_{j+\half}\sigma_{i+1}
=p_{j+\half}\sigma_{i+\half}\sigma_{i+1}
=p_{j+\half}s_i,
\end{align*}
as required.\newline
(\ref{com-0}\ref{com-vi}) From the relations~\eqref{inv-2} and\eqref{com-8}, 
\begin{align*}
s_ip_is_i=\sigma_{i+1}\sigma_{i+\half}p_i\sigma_{i+\half}\sigma_{i+1}=\sigma_{i+1}^2p_{i+1}\sigma_{i+1}^2=p_{i+1},
\end{align*}
as required. \newline
(\ref{com-0}\ref{com-vii}) From the relations~\eqref{com-5},~\eqref{com-7} and~\eqref{inv-1},~\eqref{inv-2},
\begin{align*}
s_ip_{i-\half}s_i
=\sigma_{i+\half}\sigma_{i+1}p_{i-\half}\sigma_{i+1}\sigma_{i+\half}
=\sigma_i\sigma_{i-\half}p_{i+\half}\sigma_{i-\half}\sigma_i=s_{i-1}p_{i+\half}s_{i-1},
\end{align*}
as required. \newline
(\ref{con-0}\ref{con-i}), (\ref{con-0}\ref{con-ii}) Are identical to the relations~\eqref{con-1} and~\eqref{con-2}. 
\end{proof}
\begin{proposition}\label{sigma-inv}
If $i=0,1,\dots$,  then $(\sigma_{i+\half})^2=1$ and $(\sigma_{i+1})^2=1$.
\end{proposition}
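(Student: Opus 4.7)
The plan is to prove both identities by induction on $i$, after making the reduction $(\sigma_{i+1})^2 = (\sigma_{i+\half})^2$. This reduction follows at once from Proposition~\ref{z-0}, which gives $\sigma_{i+1} = s_i \sigma_{i+\half} = \sigma_{i+\half} s_i$; in particular $s_i$ commutes with $\sigma_{i+\half}$, and hence
\begin{align*}
\sigma_{i+1}^2 = (\sigma_{i+\half} s_i)(s_i \sigma_{i+\half}) = \sigma_{i+\half}^2.
\end{align*}
Thus it is enough to prove $(\sigma_{i+\half})^2 = 1$ for all $i \geq 0$. The base cases $\sigma_{\half} = 1$ (for $i=0$) and $\sigma_{1+\half} = 1$ (for $i=1$) are built into the initial data of the recursion.

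For $i \geq 2$, assume inductively that $(\sigma_{(i-1)+\half})^2 = 1$, equivalently $(\sigma_i)^2 = 1$. Using the reformulation~\eqref{sigma-i-3}, I write
\begin{align*}
\sigma_{i+\half} = T_1 + T_2 + T_2^* - T_4 - T_5,
\end{align*}
where $T_1 = s_{i-1}s_i\sigma_{i-\half}s_is_{i-1}$ is a conjugate of $\sigma_{i-\half}$ and $T_2, T_4, T_5$ are the remaining summands, with $T_4$ and $T_5$ both $*$-invariant by Proposition~\ref{invariant}. The inductive hypothesis immediately gives $T_1^2 = s_{i-1}s_i \sigma_{i-\half}^2 s_is_{i-1} = 1$, so the desired identity $(\sigma_{i+\half})^2 = 1$ reduces to showing that the remaining $24$ cross-products in the expansion of $(T_1 + T_2 + T_2^* - T_4 - T_5)^2$ sum to zero.

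The simplification of each cross-product relies on the tools already assembled: (a) the contraction identity $p_{i+\half} L_i p_{i+\half} = p_{i+\half}$ from Proposition~\ref{prel:a}\eqref{jm:comm:a0}, which repeatedly collapses subwords of the form $p_{i+\half}(\cdots)p_{i+\half}$ whenever two consecutive factors bring $p_{i+\half}$'s together; (b) the $*$-pairing of cross-products (since $\sigma_{i+\half}^2$ is $*$-invariant, the $24$ terms group into $*$-conjugate pairs, cutting the bookkeeping in half); and (c) the commutation relations of Theorems~\ref{a-b}, \ref{a-c} and \ref{s-1}, together with Proposition~\ref{z-0}, which allow $L_{i-1}$ and $\sigma_{i-\half}$ to be pushed past every element of lower index.

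The principal obstacle is combinatorial volume: every one of the $24$ cross-products must be brought into a canonical normal form (typically $p_{i-\half}L_{i-1}(\cdots)p_{i-\half}$ or a conjugate thereof) before the cancellation in pairs becomes visible — much in the style of the long case analyses in Theorems~\ref{a-b} and~\ref{a-c}. A conceptually cleaner route, though one that depends on machinery developed later, would be to defer the identity to~\S\ref{a-1-5}: once the action of $\sigma_{i+1}$ on $V^{\otimes k}$ is made explicit through Schur--Weyl duality, the image is recognisable as a symmetric-group involution, and faithfulness of the specialization $\mathcal{A}_k(z) \to \End(V^{\otimes k})$ for $n\ge 2k$, combined with the polynomial dependence of $\sigma_{i+1}^2 - 1$ on $z$, forces the identity in $\mathcal{A}_k(z)$.
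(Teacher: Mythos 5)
Your setup coincides with the paper's: the reduction $(\sigma_{i+1})^2=(\sigma_{i+\half})^2$ via Proposition~\ref{z-0}, the trivial base cases, and then induction by squaring the five--term recursive expression (the paper squares the expression~\eqref{sigma-i-1} for $\sigma_{i+1}$ where you square~\eqref{sigma-i-3} for $\sigma_{i+\half}$; these are mirror images of one another). The problem is that you stop exactly where the proof begins. The assertion that the $24$ cross--products sum to zero \emph{is} the proposition; everything before it is a two--line reduction. In the paper this cancellation occupies several pages and rests on roughly a dozen separate identities (\eqref{state:8}--\eqref{state:2}), each requiring a chain of five to fifteen manipulations with the contraction and commutation relations. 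Moreover the cancellation pattern is not the one you describe: writing the five summands as $A+B+C-D-E$, the identities needed include $AE=B^2$, $C^2=AD$, $D^2=CD$ and $E^2=EC$ --- that is, products of \emph{distinct} summands must be shown equal to \emph{squares} of other summands before the signed sum telescopes to $A^2=1$. The $*$--symmetry does cut the bookkeeping in half, but it neither predicts which products coincide nor produces any cancellation by itself, so ``the cancellation in pairs becomes visible'' is not a step one can wave at. As written, your argument is a plan for a proof, not a proof.

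The Schur--Weyl alternative you sketch is also not available at this point in the paper's logical order, and not merely because it uses later machinery: the identification of the image of $\sigma_{k+\half}$ and $\sigma_{k+1}$ in $\End_{\mathfrak{S}_n}(V^{\otimes k+1})$ with the place--permutation involutions $\theta_{k+\half},\theta_{k+1}$ is itself established by verifying that the $\theta$'s satisfy the relations of Theorem~\ref{n-pres} --- relations which include $\sigma_{i+\half}^2=\sigma_{i+1}^2=1$. Using that identification to deduce $\sigma^2=1$ is therefore circular unless you first expand the recursion~\eqref{sigma-i-1} into the diagram basis and compute its tensor--space action directly, which is a substantial independent verification you have not supplied.
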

\begin{proof}
Given that $\sigma_{i+\half}s_{i}=s_i\sigma_{i+\half}=\sigma_{i+1}$, we obtain 
\begin{align*}
\sigma_{i+\half}^2 =(s_i\sigma_{i+1})^2=\sigma_{i+1}^2. 
\end{align*}
It therefore suffices to show that $\sigma_{i+1}^2=1$. By definition $\sigma_1=1$, so we proceed by induction. After taking the square of the right hand side of the definition~\eqref{sigma-i-1}, the proposition will follow from the relations:
\begin{align}\label{state:8}
(p_{i-\half}p_ip_{i+\half}s_{i-1}L_{i-1}p_{i-\half}s_i)^2=(p_{i-\half}p_ip_{i+\half}s_{i-1}L_{i-1}p_{i-\half}s_i)(p_{i-\half}L_{i-1}s_ip_{i-\half});
\end{align}
\begin{multline}\label{state:3}
(p_{i-\half}p_ip_{i+\half}s_{i-1}L_{i-1}p_{i-\half}s_i)(s_ip_{i-\half}L_{i-1}s_{i-1}p_{i+\half}p_ip_{i-\half})\\=(p_{i-\half}p_ip_{i+\half}s_{i-1}L_{i-1}p_{i-\half}s_i)(s_{i-1}s_i\sigma_is_is_{i-1});
\end{multline}
\begin{multline}\label{state:6}
(p_{i-\half}p_ip_{i+\half}s_{i-1}L_{i-1}p_{i-\half}s_i)(s_ip_{i-\half}L_{i-1}s_ip_{i-\half}s_i)\\=(s_{i-1}s_i\sigma_is_is_{i-1})(s_ip_{i-\half}L_{i-1}s_ip_{i-\half}s_i);
\end{multline}
\begin{multline}\label{state:9}
(s_ip_{i-\half}L_{i-1}s_{i-1}p_{i+\half}p_{i}p_{i-\half})(p_{i-\half}p_ip_{i+\half}s_{i-1}L_{i-1}p_{i-\half}s_i)\\
=(s_ip_{i-\half}L_{i-1}s_{i-1}p_{i+\half}p_{i}p_{i-\half})(s_{i-1}s_i\sigma_is_is_{i-1});
\end{multline}
\begin{align}\label{state:4}
(s_ip_{i-\half}L_{i-1}s_{i-1}p_{i+\half}p_{i}p_{i-\half})^2=(p_{i-\half}L_{i-1}s_{i}p_{i-\half})(s_ip_{i-\half}L_{i-1}s_{i-1}p_{i+\half}p_{i}p_{i-\half});
\end{align}
\begin{align}\label{state:7}
(s_ip_{i-\half}L_{i-1}s_{i-1}p_{i+\half}p_{i}p_{i-\half})(p_{i-\half}L_{i-1}s_ip_{i-\half})=(s_{i-1}s_i\sigma_is_is_{i-1})(p_{i-\half}L_{i-1}s_ip_{i-\half});
\end{align}
\begin{align}\label{state:10}
(p_{i-\half}L_{i-1}s_ip_{i-\half})(p_{i-\half}p_ip_{i+\half}s_{i-1}L_{i-1}p_{i-\half}s_i)=(p_{i-\half}L_{i-1}s_ip_{i-\frac{1}{{2}}})(s_{i-1}s_i\sigma_{i}s_is_{i-1});
\end{align}
\begin{multline}\label{state:5}
(s_ip_{i-\half}L_{i-1}s_{i-1}p_{i+\half}p_ip_{i-\half})(s_ip_{i-\half}L_{i-1}s_{i}p_{i-\half}s_i)\\=(p_{i-\half}L_{i-1}s_ip_{i-\half})(s_ip_{i-\half}L_{i-1}s_ip_{i-\half}s_i);
\end{multline}
\begin{align}\label{state:11}
(p_{i-\half}L_{i-1}s_ip_{i-\half})^2=(s_{i-1}s_i\sigma_{i}s_is_{i-1})(s_ip_{i-\half}L_{i-1}s_{i-1}p_{i+\half}p_ip_{i-\half});
\end{align}
\begin{align}\label{state:12}
(s_{i-1}s_i\sigma_is_is_{i-1})(p_{i-\half}p_ip_{i+\half}s_{i-1}L_{i-1}p_{i-\half}s_i)=(s_ip_{i-\half}L_{i-1}s_ip_{i-\half}s_i)^2;
\end{align}
\begin{align}\label{state:0}
(s_{i-1}s_i\sigma_is_is_{i-1})^2=1;
\end{align}
\begin{multline}\label{state:1}
(s_ip_{i-\half}L_{i-1}s_ip_{i-\half}s_i)(p_{i-\half}p_ip_{i+\half}s_{i-1}L_{i-1}p_{i-\half}s_i)\\
=(s_ip_{i-\half}L_{i-1}s_ip_{i-\half}s_i)(p_{i-\half}L_{i-1}s_ip_{i-\half});
\end{multline}
\begin{multline}\label{state:2}
(s_ip_{i-\half}L_{i-1}s_ip_{i-\half}s_i)(s_ip_{i-\half}L_{i-1}s_{i-1}p_{i+\half}p_ip_{i-\half})\\
=(s_ip_{i-\half}L_{i-1}s_ip_{i-\half}s_i)(s_{i-1}s_i\sigma_is_is_{i-1}).
\end{multline}
From the left hand side of~\eqref{state:8}, using the fact that $p_{i-\half}L_{i-1}p_{i-\half}=p_{i-\half}$, together with $p_{i+\half}s_{i-1}p_{i+\half}=p_{i-\half}p_{i+\half}$, we obtain
\begin{align*}
(p_{i-\half}p_ip_{i+\half}s_{i-1}L_{i-1}p_{i-\half}s_i)^2&=p_{i-\half}p_ip_{i+\half}s_{i-1}L_{i-1}p_{i-\half}p_{i+\half}s_{i-1}L_{i-1}p_{i-\half}s_i\\
&=p_{i-\half}p_ip_{i+\half}s_{i-1}p_{i+\half}L_{i-1}p_{i-\half}L_{i-1}p_{i-\half}\\
&=p_{i+\half}p_{i-\half}L_{i-1}p_{i-\half}L_{i-1}p_{i-\half}\\
&=p_{i-\half}p_{i+\half}.
\end{align*}
Similarly, from the right hand side of~\eqref{state:8}, we obtain
\begin{align*}
(p_{i-\half}p_ip_{i+\half}s_{i-1}L_{i-1}p_{i-\half}s_i)(p_{i-\half}L_{i-1}s_ip_{i-\half})&=p_{i-\half}p_ip_{i+\half}s_{i-1}L_{i-1}p_{i+\half}p_{i-\half}L_{i-1}s_ip_{i-\half}\\
&=p_{i-\half}p_ip_{i+\half}s_{i-1}L_{i-1}p_{i+\half}p_{i-\half}L_{i-1}s_ip_{i-\half}\\
&=p_{i-\half}p_ip_{i+\half}s_{i-1}p_{i+\half}L_{i-1}p_{i-\half}L_{i-1}s_ip_{i-\half}\\
&=p_{i-\half}p_ip_{i-\half}p_{i+\half}L_{i-1}p_{i-\half}L_{i-1}s_ip_{i-\half}\\
&=p_{i-\half}p_{i+\half},
\end{align*}
which demonstrates~\eqref{state:8}. Now consider the left hand side of~\eqref{state:3}
\begin{align*}
(p_{i-\half}p_ip_{i+\half}s_{i-1}L_{i-1}p_{i-\half}s_i)(s_ip_{i-\half}L_{i-1}s_{i-1}&p_{i+\half}p_ip_{i-\half})\\
&=p_{i-\half}p_ip_{i+\half}s_{i-1}L_{i-1}p_{i-\half}L_{i-1}s_{i-1}p_{i+\half}p_ip_{i-\half}\\
&=p_{i-\half}p_ip_{i+\half}\sigma_ip_{i-1}\sigma_ip_{i+\half}p_ip_{i-\half}\\
&=p_{i-\half}p_ip_{i+\half}\sigma_is_ip_{i-1}s_i\sigma_ip_{i+\half}p_ip_{i-\half}\\
&=p_{i-\half}p_i\sigma_is_ip_{i-\half}s_i\sigma_ip_ip_{i-\half}\\
&=p_{i-\half}p_i\sigma_is_{i-1}p_{i+\half}s_{i-1}\sigma_ip_ip_{i-\half}\\
&=p_{i-\half}p_i\sigma_{i-\half}p_{i+\half}\sigma_{i-\half}p_ip_{i-\half}\\
&=p_{i-\half}p_i(\sigma_{i-\half})^2p_{i+\half}p_ip_{i-\half}\\
&=p_{i-\half}.
\end{align*}
From the right hand side of~\eqref{state:3}, we obtain
\begin{align*}
(p_{i-\half}p_ip_{i+\half}s_{i-1}L_{i-1}p_{i-\half}s_i)(s_{i-1}s_i\sigma_is_is_{i-1})&=(p_{i-\half}L_{i-1}s_{i-1}p_{i+\half}p_ip_{i-\half}s_i)(s_{i-1}s_i\sigma_is_is_{i-1})\\
&=p_{i-\half}L_{i-1}s_{i-1}p_{i+\half}p_is_is_{i-1}p_{i+\half}\sigma_is_is_{i-1}\\
&=p_{i-\half}L_{i-1}s_{i-1}p_{i+\half}s_{i-1}p_{i+1}p_{i+\half}\sigma_is_is_{i-1}\\
&=p_{i-\half}L_{i-1}s_{i}p_{i-\half}s_{i}p_{i+1}p_{i+\half}\sigma_is_is_{i-1}\\
&=p_{i-\half}L_{i-1}s_{i}p_{i-\half}p_{i}s_{i}p_{i+\half}\sigma_is_is_{i-1}\\
&=p_{i-\half}s_{i}L_{i-1}p_{i-\half}p_{i}p_{i+\half}\sigma_is_is_{i-1}\\
&=p_{i-\half}s_{i}L_{i-1}p_{i-\half}p_{i}p_{i+\half}\sigma_{i-\half}s_{i-1}s_is_{i-1}\\
&=p_{i-\half}s_{i}L_{i-1}p_{i-\half}p_{i}\sigma_{i-\half}p_{i+\half}s_{i-1}s_is_{i-1}\\
&=p_{i-\half}s_{i}L_{i-1}p_{i-\half}p_{i}\sigma_{i-\half}s_{i-1}s_ip_{i-\half}\\
&=p_{i-\half}s_{i}L_{i-1}p_{i-\half}p_{i}\sigma_{i}s_ip_{i-\half}\\
&=p_{i-\half}s_{i}L_{i-1}p_{i-\half}L_{i-1}s_ip_{i-\half}\\
&=(p_{i-\half}s_{i}L_{i-1}p_{i-\half})(p_{i-\half}L_{i-1}s_ip_{i-\half}).
\end{align*}
Now, 
\begin{align}
\label{inter:d}(p_{i-\half}s_{i}L_{i-1}p_{i-\half})(p_{i-\half}L_{i-1}s_ip_{i-\half})&=p_{i-\half}L_{i-1}s_ip_{i-\half}s_iL_{i-1}p_{i-\half}\\
&=p_{i-\half}p_i\sigma_is_ip_{i-\half}s_i\sigma_{i}p_ip_{i-\half}\notag\\
&=p_{i-\half}p_i\sigma_is_{i-1}p_{i+\half}s_{i-1}\sigma_{i}p_ip_{i-\half}\notag\\
&=p_{i-\half}p_i\sigma_{i-\half}p_{i+\half}\sigma_{i-\half}p_ip_{i-\half}\notag\\
&=p_{i-\half}p_i(\sigma_{i-\half})^2p_{i+\half}p_ip_{i-\half}\notag\\
&=p_{i-\half},\notag
\end{align}
which completes the proof of~\eqref{state:3}. From the left hand side of~\eqref{state:6},
\begin{align*}
(p_{i-\half}p_ip_{i+\half}s_{i-1}L_{i-1}p_{i-\half}s_i)(s_ip_{i-\half}L_{i-1}s_ip_{i-\half}s_i)&=p_{i-\half}L_{i-1}s_{i-1}p_{i+\half}p_ip_{i-\half}L_{i-1}s_ip_{i-\half}s_i\\
&=p_{i-\half}L_{i-1}s_{i-1}p_{i+\half}p_ip_{i-\half}p_i\sigma_is_ip_{i-\half}s_i\\
&=p_{i-\half}L_{i-1}s_{i-1}p_{i+\half}p_i\sigma_is_ip_{i-\half}s_i\\
&=p_{i-\half}L_{i-1}s_{i-1}p_{i+\half}p_i\sigma_is_{i-1}p_{i+\half}s_{i-1}\\
&=p_{i-\half}L_{i-1}s_{i-1}p_{i+\half}p_i\sigma_{i-\half}p_{i+\half}s_{i-1}\\
&=p_{i-\half}L_{i-1}s_{i-1}p_{i+\half}p_ip_{i+\half}\sigma_{i-\half}s_{i-1}\\
&=p_{i-\half}L_{i-1}s_{i-1}\sigma_{i-\half}p_{i+\half}s_{i-1}\\
&=p_{i-\half}p_i(\sigma_{i})^2p_{i+\half}s_{i-1}\\
&=p_{i-\half}p_ip_{i+\half}s_{i-1}.
\end{align*}
The right hand side of~\eqref{state:6} gives
\begin{align*}
(s_{i-1}s_i\sigma_is_is_{i-1})(s_ip_{i-\half}L_{i-1}s_ip_{i-\half}s_i)&=s_{i-1}s_i\sigma_ip_{i+\half}s_{i-1}s_iL_{i-1}s_ip_{i-\half}s_i\\
&=s_{i-1}s_i\sigma_ip_{i+\half}s_{i-1}s_iL_{i-1}s_ip_{i-\half}s_i\\
&=s_{i-1}s_is_{i-1}\sigma_{i-\half}p_{i+\half}s_{i-1}s_iL_{i-1}s_ip_{i-\half}s_i\\
&=p_{i-\half}s_is_{i-1}\sigma_{i-\half}s_{i-1}s_iL_{i-1}s_ip_{i-\half}s_i\\
&=p_{i-\half}s_i\sigma_{i-\half}(s_{i-1})^2(s_i)^2L_{i-1}p_{i-\half}s_i\\
&=p_{i-\half}s_i\sigma_{i-\half}\sigma_ip_ip_{i-\half}s_i\\
&=p_{i-\half}s_is_{i-1}p_ip_{i-\half}s_i\\
&=p_{i-\half}p_{i-1}s_ip_{i-\half}s_i\\
&=p_{i-\half}p_{i-1}s_{i-1}p_{i+\half}s_{i-1}\\
&=p_{i-\half}p_{i}p_{i+\half}s_{i-1},
\end{align*}
which demonstrates~\eqref{state:6}. The left hand side of~\eqref{state:9} gives
\begin{align*}
(s_ip_{i-\half}L_{i-1}s_{i-1}p_{i+\half}p_ip_{i-\half})(p_{i-\half}p_ip_{i+\half}s_{i-1}L_{i-1}p_{i-\half}s_i)&=s_ip_{i-\half}L_{i-1}s_{i-1}p_{i+\half}s_{i-1}L_{i-1}p_{i-\half}s_i\\
&=s_ip_{i-\half}L_{i-1}s_{i}p_{i-\half}s_{i}L_{i-1}p_{i-\half}s_i\\
&=s_ip_{i-\half}s_iL_{i-1}p_{i-\half}L_{i-1}s_ip_{i-\half}s_i\\
&=s_{i-1}p_{i+\half}s_{i-1}\sigma_ip_ip_{i-\half}p_i\sigma_is_{i-1}p_{i+\half}s_{i-1}\\
&=s_{i-1}p_{i+\half}\sigma_{i-\half}p_i\sigma_{i-\half}p_{i+\half}s_{i-1}\\
&=s_{i-1}(\sigma_{i-\half})^2p_{i+\half}s_{i-1}\\
&=s_{i-1}p_{i+\half}s_{i-1},
\end{align*}
while the right hand side of~\eqref{state:9} gives
\begin{align*}
(s_ip_{i-\half}L_{i-1}s_{i-1}p_{i+\half}p_ip_{i-\half})(s_{i-1}s_i\sigma_is_is_{i-1})&=s_ip_{i-\half}L_{i-1}s_{i-1}p_{i+\half}p_ip_{i-\half}s_i\sigma_is_is_{i-1}\\
&=s_ip_{i-\half}L_{i-1}s_{i-1}p_{i+\half}p_is_is_{i-1}p_{i+\half}\sigma_{i-\half}s_is_{i-1}\\
&=s_ip_{i-\half}L_{i-1}s_{i-1}p_{i+\half}p_{i+1}s_{i-1}p_{i+\half}\sigma_{i-\half}s_is_{i-1}\\
&=s_ip_{i-\half}L_{i-1}s_{i-1}p_{i+\half}s_{i-1}p_{i+1}p_{i+\half}\sigma_{i-\half}s_is_{i-1}\\
&=s_ip_{i-\half}L_{i-1}s_{i}p_{i-\half}s_{i}p_{i+1}p_{i+\half}\sigma_{i-\half}s_is_{i-1}\\
&=s_{i}p_{i-\half}s_{i}L_{i-1}p_{i-\half}p_{i}s_{i}p_{i+\half}\sigma_{i-\half}s_is_{i-1}\\
&=s_{i-1}p_{i+\half}s_{i-1}\sigma_ip_ip_{i-\half}p_{i}p_{i+\half}\sigma_{i-\half}s_is_{i-1}\\
&=s_{i-1}p_{i+\half}\sigma_{i-\half}p_{i}p_{i+\half}\sigma_{i-\half}s_is_{i-1}\\
&=s_{i-1}\sigma_{i-\half}p_{i+\half}p_{i}p_{i+\half}\sigma_{i-\half}s_is_{i-1}\\
&=s_{i-1}p_{i+\half}(\sigma_{i-\half})^2s_{i-1}\\
&=s_{i-1}p_{i+\half}s_{i-1},
\end{align*}
which demonstrates~\eqref{state:9}. The statement~\eqref{state:4} is equivalent to~\eqref{state:8} which has already been verified. The right hand side of~\eqref{state:7} leads to
\begin{align*}
(s_ip_{i-\half}L_{i-1}s_{i-1}p_{i+\half}p_ip_{i-\half})(p_{i-\half}L_{i-1}s_{i}p_{i-\half})&=s_ip_{i-\half}L_{i-1}s_{i-1}p_{i+\half}s_{i-1}p_{i-1}p_{i-\half}L_{i-1}s_{i}p_{i-\half}\\
&=s_ip_{i-\half}L_{i-1}s_{i}p_{i-\half}s_{i}p_{i-1}p_{i-\half}L_{i-1}s_{i}p_{i-\half}\\
&=s_{i-1}p_{i+\half}s_{i-1}L_{i-1}p_{i-\half}s_{i}p_{i-1}p_{i-\half}L_{i-1}s_{i}p_{i-\half}\\
&=s_{i-1}p_{i+\half}s_{i-1}L_{i-1}p_{i-\half}p_{i-1}s_{i}p_{i-\half}s_{i}L_{i-1}p_{i-\half}\\
&=s_{i-1}p_{i+\half}s_{i-1}L_{i-1}p_{i-\half}p_{i-1}s_{i-1}p_{i+\half}s_{i-1}L_{i-1}p_{i-\half}\\
&=s_{i-1}p_{i+\half}s_{i-1}\sigma_ip_ip_{i-\half}p_{i}p_{i+\half}s_{i-1}\sigma_ip_ip_{i-\half}\\
&=s_{i-1}p_{i+\half}\sigma_{i-\half}p_ip_{i-\half}p_{i}p_{i+\half}\sigma_{i-\half}p_ip_{i-\half}\\
&=s_{i-1}p_{i+\half}\sigma_{i-\half}p_ip_{i+\half}\sigma_{i-\half}p_ip_{i-\half}\\
&=s_{i-1}\sigma_{i-\half}p_{i+\half}p_ip_{i+\half}\sigma_{i-\half}p_ip_{i-\half}\\
&=s_{i-1}p_{i+\half}(\sigma_{i-\half})^2p_ip_{i-\half}\\
&=s_{i-1}p_{i+\half}p_ip_{i-\half},
\end{align*}
while 
\begin{align*}
(s_{i-1}s_i\sigma_is_is_{i-1})(p_{i-\half}L_{i-1}s_ip_{i-\half})&=s_{i-1}s_i\sigma_is_ip_{i-\half}L_{i-1}s_ip_{i-\half}\\
&=s_{i-1}s_i\sigma_is_ip_{i-\half}s_iL_{i-1}p_{i-\half}\\
&=s_{i-1}s_i\sigma_is_{i-1}p_{i+\half}s_{i-1}\sigma_ip_ip_{i-\half}\\
&=s_{i-1}s_i\sigma_{i-\half}p_{i+\half}\sigma_{i-\half}p_ip_{i-\half}\\
&=s_{i-1}s_ip_{i+\half}(\sigma_{i-\half})^2p_ip_{i-\half}\\
&=s_{i-1}p_{i+\half}p_ip_{i-\half},
\end{align*}
as required. Since the statement~\eqref{state:10} is equivalent to~\eqref{state:7}, we consider~\eqref{state:5}. Using the relation $p_{i-\half}s_ip_{i-\half}=p_{i-\half}p_{i+\half}$,
\begin{align*}
(s_ip_{i-\half}L_{i-1}p_{i+\half}s_{i-1}p_{i+\half}p_ip_{i-\half})(s_ip_{i-\half}L_{i-1}&s_ip_{i-\half}s_i)\\
&=s_ip_{i-\half}L_{i-1}p_{i+\half}s_{i-1}p_{i+\half}p_{i-\half}L_{i-1}s_ip_{i-\half}s_i\\
&=s_ip_{i-\half}L_{i-1}p_{i+\half}s_{i-1}p_{i+\half}p_{i-\half}L_{i-1}p_{i-\half}\\
&=s_ip_{i-\half}L_{i-1}p_{i+\half}s_{i-1}p_{i+\half}p_{i-\half}\\
&=p_{i+\half}p_{i-\half}L_{i-1}p_{i-\half}\\
&=p_{i+\half}p_{i-\half}.
\end{align*}
On the other hand, 
\begin{align*}
(p_{i-\half}L_{i-1}s_ip_{i-\half})(s_ip_{i-\half}L_{i-1}s_ip_{i-\half}s_i)&=p_{i-\half}L_{i-1}s_ip_{i+\half}p_{i-\half}L_{i-1}s_ip_{i-\half}s_i\\
&=p_{i+\half}(p_{i-\half}L_{i-1}p_{i-\half})^2\\
&=p_{i+\half}p_{i-\half}.
\end{align*}
The left hand side of~\eqref{state:11} is given by~\eqref{inter:d}, and the right hand side by
\begin{align*}
(s_{i-1}s_i\sigma_is_is_{i-1})(s_ip_{i-\half}L_{i-1}s_{i-1}p_{i+\half}p_ip_{i-\half})&=s_{i-1}s_i\sigma_is_is_{i-1}s_ip_{i-\half}L_{i-1}s_{i-1}p_{i+\half}s_{i-1}p_{i-1}p_{i-\half}\\
&=s_{i-1}s_i\sigma_is_is_{i-1}s_ip_{i-\half}L_{i-1}s_{i}p_{i-\half}s_{i}p_{i-1}p_{i-\half}\\
&=s_{i-1}s_i\sigma_is_is_{i-1}s_ip_{i-\half}s_{i}L_{i-1}p_{i-\half}s_{i}p_{i-1}p_{i-\half}\\
&=s_{i-1}s_i\sigma_is_i(s_{i-1})^2p_{i+\half}s_{i-1}\sigma_ip_ip_{i-\half}s_{i}p_{i-1}p_{i-\half}\\
&=s_{i-1}s_is_{i-1}\sigma_{i-\half}p_{i+\half}\sigma_{i-\half}p_ip_{i-\half}p_{i-1}s_{i}p_{i-\half}\\
&=s_{i-1}p_{i-\half}s_is_{i-1}(\sigma_{i-\half})^2p_ip_{i-\half}p_{i-1}s_{i}p_{i-\half}\\
&=p_{i-\half}s_is_{i-1}p_ip_{i-\half}p_{i-1}s_{i}p_{i-\half}\\
&=p_{i-\half},
\end{align*}
as required. Considering the left hand side of~\eqref{state:12}, 
\begin{align*}
(s_{i-1}s_i\sigma_is_is_{i-1})(p_{i-\half}p_ip_{i+\half}s_{i-1}L_{i-1}p_{i-\half}s_i)&=s_{i-1}s_i\sigma_is_ip_{i-\half}s_ip_{i+1}p_{i+\half}s_{i-1}\sigma_ip_ip_{i-\half}s_i\\
&=s_{i-1}s_i\sigma_is_{i-1}p_{i+\half}s_{i-1}p_{i+1}p_{i+\half}s_{i-1}\sigma_ip_ip_{i-\half}s_i\\
&=s_{i-1}s_i\sigma_{i-\half}p_{i+\half}s_{i-1}p_{i+1}p_{i+\half}\sigma_{i-\half}p_ip_{i-\half}s_i\\
&=s_{i-1}p_{i+\half}s_{i-1}\sigma_{i-\half}p_{i+1}p_{i+\half}\sigma_{i-\half}p_ip_{i-\half}s_i\\
&=s_{i}p_{i-\half}s_{i}p_{i+1}p_{i+\half}(\sigma_{i-\half})^2p_ip_{i-\half}s_i\\
&=s_{i}p_{i-\half}s_{i}p_{i+1}p_{i+\half}p_ip_{i-\half}s_i\\
&=s_ip_{i-\half}s_i,
\end{align*}
which, by~\eqref{inter:d}, is equal to the right hand side of~\eqref{state:12}. Since~\eqref{state:5} is equivalent to~\eqref{state:1}, while~\eqref{state:2} is equivalent to~\eqref{state:6}, the proof of the proposition is complete.
\end{proof}
We record for later reference further consequences of the presentation given in Theorem~\ref{hr-presentation}. 
\begin{proposition}\label{r-2}
For $i=1,2,\ldots,$ the following statements hold:
\begin{enumerate}[label=(\arabic{*}), ref=\arabic{*},leftmargin=0pt,itemindent=1.5em]
\item $p_{i+1}\sigma_{i+1}p_{i+1}=L_ip_{i+1}$,\label{r-2-q}
\item $p_{i+1}\sigma_{i+\half}p_{i+1}=(z-L_{i-\half})p_{i+1}$,\label{r-2-a}
\item $p_{i+\thalf}\sigma_{i+1}p_{i+\thalf}=p_{i+\half}p_{i+\thalf}$.\label{r-2-b}
\end{enumerate}
\end{proposition}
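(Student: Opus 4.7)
The plan is to deduce each part from Proposition~\ref{prel:a}, Proposition~\ref{z-0}, and Proposition~\ref{jm:commute}, together with the basic partition-algebra relations, especially the absorption identities $p_{i+1}p_{i+\half}p_{i+1}=p_{i+1}$, $p_{i+\thalf}p_{i+1}p_{i+\thalf}=p_{i+\thalf}$, $p_ip_{i+\half}p_{i+1}=p_is_i$, and $s_ip_is_i=p_{i+1}$.

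For~\eqref{r-2-q}, I would begin from Proposition~\ref{prel:a}\eqref{prel:a2}, namely $\sigma_{i+1}p_{i+1}p_{i+\half}=L_ip_{i+\half}$, and multiply on the right by $p_{i+1}$. Using relation~(\ref{con-0}\ref{con-ii}) in the form $p_{i+1}p_{i+\half}p_{i+1}=p_{i+1}$, this gives $\sigma_{i+1}p_{i+1}=L_ip_{i+\half}p_{i+1}$, so $p_{i+1}\sigma_{i+1}p_{i+1}=p_{i+1}L_ip_{i+\half}p_{i+1}$. Since $L_i\in\mathcal{A}_i(z)$ commutes with $p_{i+1}$ (each generator of $\mathcal{A}_i(z)$ commutes with $p_{i+1}$ by the commutation relations), the right-hand side collapses to $L_ip_{i+1}p_{i+\half}p_{i+1}=L_ip_{i+1}$.

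For~\eqref{r-2-a}, combining the displayed formula $\sigma_{i+1}p_{i+1}=L_ip_{i+\half}p_{i+1}$ with $\sigma_{i+\half}=s_i\sigma_{i+1}$ (Proposition~\ref{z-0}) gives $\sigma_{i+\half}p_{i+1}=s_iL_ip_{i+\half}p_{i+1}$, and hence
\begin{align*}
p_{i+1}\sigma_{i+\half}p_{i+1}=p_{i+1}s_iL_ip_{i+\half}p_{i+1}=s_ip_iL_ip_{i+\half}p_{i+1}
\end{align*}
after applying $p_{i+1}s_i=s_ip_i$. Next I would invoke Proposition~\ref{jm:commute}\eqref{jm:commute:4.b} with the index shifted down by one step, which yields $p_i(L_{i-\half}+L_i)=zp_i$ and therefore $p_iL_i=zp_i-p_iL_{i-\half}$. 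Substituting, then pushing $L_{i-\half}$ rightwards using that it commutes with both $p_{i+\half}$ and $p_{i+1}$ (since $L_{i-\half}\in\mathcal{A}_{i-\half}(z)$ and both $p_{i+\half}$ and $p_{i+1}$ commute with every generator of $\mathcal{A}_{i-\half}(z)$), and finally applying $p_ip_{i+\half}p_{i+1}=p_is_i$ (listed just below Theorem~\ref{hr-presentation}), produces
\begin{align*}
p_iL_ip_{i+\half}p_{i+1}=zp_is_i-p_is_iL_{i-\half}.
\end{align*}
Multiplying on the left by $s_i$ and using $s_ip_is_i=p_{i+1}$ (relation~(\ref{com-0}\ref{com-vi})) together with the commutativity of $L_{i-\half}$ with $p_{i+1}$ then yields $zp_{i+1}-L_{i-\half}p_{i+1}=(z-L_{i-\half})p_{i+1}$.

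For~\eqref{r-2-b}, I would write $\sigma_{i+1}=s_i\sigma_{i+\half}$ (Proposition~\ref{z-0}) and push $\sigma_{i+\half}$ through $p_{i+\thalf}$ to the right, using that $\sigma_{i+\half}\in\mathcal{A}_{i+\half}(z)$ commutes with $p_{i+\thalf}$ (every generator of $\mathcal{A}_{i+\half}(z)$ commutes with $p_{i+\thalf}$ by the commutation relations~(\ref{com-0}\ref{com-i})--(\ref{com-0}\ref{com-v})). This gives
\begin{align*}
p_{i+\thalf}\sigma_{i+1}p_{i+\thalf}=p_{i+\thalf}s_ip_{i+\thalf}\sigma_{i+\half}=p_{i+\thalf}p_{i+\half}\sigma_{i+\half},
\end{align*}
where the second step uses $p_{i+\thalf}s_ip_{i+\thalf}=p_{i+\thalf}p_{i+\half}$ (the identity $p_{j+\half}s_{j\pm1}p_{j+\half}=p_{j+\half}p_{j\pm1+\half}$ recorded just below Theorem~\ref{hr-presentation}, applied with $j=i+1$ and the minus sign). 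Finally $p_{i+\half}\sigma_{i+\half}=p_{i+\half}$---immediate from Proposition~\ref{z-0} combined with Proposition~\ref{prel:a}\eqref{prel:a0}---and $p_{i+\half}p_{i+\thalf}=p_{i+\thalf}p_{i+\half}$ yield the required $p_{i+\half}p_{i+\thalf}$.

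The main obstacle is~\eqref{r-2-a}: the crucial exchange identity $p_ip_{i+\half}p_{i+1}=p_is_i$ is what converts the inner $p_{i+\half}p_{i+1}$ into a factor that, after the outer $s_i$'s, produces $s_ip_is_i=p_{i+1}$. Without this rewriting, the most natural simplifications of $p_{i+1}\sigma_{i+\half}p_{i+1}$ using $\sigma_{i+\half}=s_i\sigma_{i+1}$ quickly collapse into tautologies.
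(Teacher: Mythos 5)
Your proof is correct, and for parts~(2) and~(3) it takes a genuinely different route from the paper. The paper proves~(1) essentially as you do (via Proposition~\ref{prel:a}\eqref{prel:a2} and the contraction $p_{i+1}p_{i+\half}p_{i+1}=p_{i+1}$), but for~(2) and~(3) it argues by induction on $i$: it expands $\sigma_{i+\half}$ (resp.\ $\sigma_{i+1}$) through the defining recursion~\eqref{sigma-i-1}, checks a base case such as $p_{2}\sigma_{1+\half}p_{2}=(z-L_{\half})p_{2}$, and simplifies all five summands term by term. You instead bypass the recursion entirely, deriving~(2) from $\sigma_{i+\half}p_{i+1}=s_iL_ip_{i+\half}p_{i+1}$ together with the shifted identity $p_i(L_{i-\half}+L_i)=zp_i$ from Proposition~\ref{jm:commute} and the exchange $p_ip_{i+\half}p_{i+1}=p_is_i$, and deriving~(3) from $\sigma_{i+1}=s_i\sigma_{i+\half}$ plus the fact that $\sigma_{i+\half}\in\mathcal{A}_{i+\half}(z)$ commutes with $p_{i+\thalf}$ and is absorbed by $p_{i+\half}$. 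This is legitimate, since Propositions~\ref{prel:a}, \ref{z-0} and~\ref{jm:commute} all precede Proposition~\ref{r-2} and none of their proofs depend on it; the only point worth flagging is that the shift $i\mapsto i-1$ in Proposition~\ref{jm:commute}\eqref{jm:commute:4.b} formally covers only $i\ge2$, so the case $i=1$ of~(2) needs the separate (trivial) observation $p_1(L_{\half}+L_1)=p_1^2=zp_1$, which the paper itself records. What your approach buys is brevity and the conceptual point that Proposition~\ref{r-2} is a formal consequence of the earlier commutation results rather than requiring a fresh induction on the recursive definitions; what the paper's approach buys is independence from Proposition~\ref{jm:commute} and uniformity with the style of the surrounding proofs.
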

\begin{proof}
\eqref{r-2-q} From Proposition~\ref{prel:a}, we obtain $p_{i+\half}p_{i+1}\sigma_{i+1}=p_{i+\half}L_i$. Thus
\begin{align*}
p_{i+1}\sigma_{i+1}=p_{i+1}p_{i+\half}p_{i+1}\sigma_{i+1}=p_{i+1}p_{i+\half}L_i
&&\text{and}&&
p_{i+1}\sigma_{i+1}p_{i+1}=p_{i+1}p_{i+\half}L_ip_{i+1}=L_ip_{i+1},
\end{align*}
as required.\newline
\eqref{r-2-a} We first compute
\begin{align*}
p_{i+1}s_ip_{i-\half}p_ip_{i+\half}s_{i-1}L_{i-1}p_{i-\half}s_ip_{i+1}
&=s_ip_{i}p_{i-\half}p_ip_{i+\half}s_{i-1}L_{i-1}p_{i-\half}s_ip_{i+1}\\
&=s_ip_{i}p_{i+\half}s_{i-1}L_{i-1}p_{i-\half}s_ip_{i+1}\notag\\
&=s_ip_{i}p_{i+\half}\sigma_ip_{i-1}p_{i-\half}p_{i}s_i\notag\\
&=s_ip_{i}p_{i+\half}\sigma_is_{i-1}p_{i}s_i\notag\\
&=s_ip_{i}p_{i+\half}\sigma_{i-\half}p_{i}s_i\notag\\
&=\sigma_{i-\half}p_{i+1}. \notag
\end{align*}
Now observe that $p_{2}\sigma_{1+\half}p_{2}=(z-L_\half)p_2$ and hence, by induction,
\begin{align*}
p_{i+1}\sigma_{i+\half}p_{i+1}
&=s_{i}s_{i-1}p_i\sigma_{i-\half}p_{i}s_{i-1}s_i
+p_{i+1}p_{i-\half}L_{i-1}s_ip_{i-\half}s_ip_{i+1}\\
&\quad+p_{i+1}s_ip_{i-\half}L_{i-1}s_ip_{i-\half}p_{i+1}
-p_{i+1}p_{i-\half}L_{i-1}s_{i-1}p_{i+\half}p_ip_{i-\half}p_{i+1}\\
&\quad-p_{i+1}s_ip_{i-\half}p_ip_{i+\half}s_{i-1}L_{i-1}p_{i-\half}s_ip_{i+1}\\
&=s_{i-1}s_ip_i\sigma_{i-\half}p_is_is_{i-1}+p_{i-\half}L_{i-1}p_{i+1}+L_{i-1}p_{i-\half}p_{i+1}\\
&\quad-p_{i-\half}L_{i-1}p_{i-1}p_{i-\half}p_{i+1}-\sigma_{i-\half}p_{i+1}\\
&=(z-L_{i-\half})p_{i+1}. 
\end{align*}
\eqref{r-2-b} Observe that $p_{2+\half}\sigma_2p_{2+\half}=p_{2+\half}s_1p_{2+\half}=p_{2+\half}p_{1+\half}$ and, by induction,
\begin{align*}
p_{i+\thalf}s_{i-1}s_i\sigma_is_is_{i-1}p_{i+\thalf}&=s_{i-1}p_{i+\thalf}s_i\sigma_is_ip_{i+\thalf}s_{i-1}\\
&=s_{i-1}s_is_{i+1}p_{i+\half}s_{i+1}\sigma_is_{i+1}p_{i+\half}s_{i}s_{i+1}s_{i-1}\\
&=s_{i-1}s_is_{i+1}p_{i+\half}\sigma_ip_{i+\half}s_{i}s_{i+1}s_{i-1}\\
&=s_{i-1}s_is_{i+1}p_{i-\half}p_{i+\half}s_{i}s_{i+1}s_{i-1}\\
&=p_{i+\half}p_{i+\thalf}.
\end{align*}
Therefore, using the definition~\eqref{sigma-i-1} and the fact that $p_{i-\half}L_{i-1}p_{i-\half}=p_{i-\half}$,
\begin{align*}
p_{i+\thalf}\sigma_{i+1}p_{i+\thalf}
&=p_{i+\thalf}p_{i+\half}
+p_{i+\thalf}s_ip_{i-\half}L_{i-1}s_ip_{i-\half}s_ip_{i+\thalf}\\
&\quad+p_{i+\thalf}p_{i-\half}L_{i-1}s_ip_{i-\half}p_{i+\thalf}
-p_{i+\thalf}s_ip_{i-\half}L_{i-1}s_{i-1}p_{i+\half}p_ip_{i-\half}p_{i+\thalf}\\
&\quad-p_{i+\thalf}p_{i-\half}p_ip_{i+\half}s_{i-1}L_{i-1}p_{i-\half}s_ip_{i+\thalf}\\
&=p_{i+\thalf}p_{i+\half}+p_{i+\thalf}s_ip_{i+\thalf}p_{i-\half}L_{i-1}s_ip_{i-\half}s_i\\
&\quad+p_{i-\half}L_{i-1}p_{i+\thalf}s_ip_{i+\thalf}p_{i-\half}
-p_{i+\thalf}s_ip_{i+\thalf}p_{i-\half}L_{i-1}s_{i-1}p_{i+\half}p_ip_{i-\half}\\
&\quad-p_{i-\half}p_ip_{i+\half}s_{i-1}L_{i-1}p_{i-\half}p_{i+\thalf}s_ip_{i+\thalf}\\
&=p_{i+\thalf}p_{i+\half}+p_{i+\thalf}p_{i+\half}p_{i-\half}L_{i-1}s_{i-1}p_{i+\half}s_{i-1}\\
&\quad+p_{i-\half}L_{i-1}p_{i+\thalf}p_{i+\half}p_{i-\half}
-p_{i+\thalf}p_{i+\half}p_{i-\half}L_{i-1}s_{i-1}p_{i+\half}p_ip_{i-\half}\\
&\quad-p_{i-\half}p_ip_{i+\half}s_{i-1}L_{i-1}p_{i-\half}p_{i+\half}p_{i+\thalf}\\
&=p_{i+\thalf}p_{i+\half},
\end{align*}
as required. 
\end{proof}
The next statement gives an alternative recursion for the family $(L_{i+\half}:i=0,1\ldots)$ for use in \S\ref{a-1-5}. 
\begin{theorem}
If $i=1,2,\ldots,$ then 
\begin{align*}
L_{i+\half}=-L_ip_{i+\half}-p_{i+\half}L_i+(z-L_{i-\half})p_{i+\half}+s_iL_{i-\half}s_i+\sigma_{i+\half}.
\end{align*}
\end{theorem}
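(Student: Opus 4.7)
The plan is to reduce the theorem to a single identity by direct comparison with the defining recursion~\eqref{jm-i-2}. The stated formula and~\eqref{jm-i-2} agree in four of the five summands, differing only in that $p_{i+\half}L_ip_ip_{i+\half}$ has been replaced by $(z-L_{i-\half})p_{i+\half}$. So the theorem is equivalent to the single identity
\[
p_{i+\half}L_ip_ip_{i+\half}=(z-L_{i-\half})p_{i+\half}.
\]

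To prove this identity I would simply read it off from a chain of equalities already executed inside the proof of Proposition~\ref{jm:commute}\eqref{jm:commute:3}. Starting from $zp_{i+\half}=zp_{i+\half}p_{i+1}p_{i+\half}$ (by the contraction~\eqref{con-i}) and substituting $zp_{i+1}=s_iL_{i-\half}s_ip_{i+1}+s_iL_is_ip_{i+1}$ from~\eqref{vin:1}, the two sandwich terms simplify using $p_{i+\half}s_i=p_{i+\half}$ (relation~\eqref{ide-iii}), the conversion $s_ip_{i+1}p_{i+\half}=p_ip_{i+\half}$ (from $s_ip_{i+1}s_i=p_i$ in~\eqref{com-vi} together with~\eqref{ide-iii}), the commutation of $L_{i-\half}$ with $p_{i+\half}$, and finally the contraction $p_{i+\half}p_ip_{i+\half}=p_{i+\half}$. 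The result is
\[
zp_{i+\half}=L_{i-\half}p_{i+\half}+p_{i+\half}L_ip_ip_{i+\half},
\]
which is exactly a rearrangement of the desired identity. Substituting back into~\eqref{jm-i-2} then produces the stated formula.

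There is no genuine obstacle. The only step that might merit comment is moving $L_{i-\half}$ past $p_{i+\half}$, but this is immediate from Theorem~\ref{hr-presentation}: every generator of $\mathcal{A}_{i-\half}(z)$ (namely $p_1,\dots,p_{i-1}$, $p_{1+\half},\dots,p_{i-\half}$, and $s_1,\dots,s_{i-2}$) commutes with $p_{i+\half}$ by the relations~\eqref{com-iii}, \eqref{com-ii}, and~\eqref{com-v} respectively, so $[L_{i-\half},p_{i+\half}]=0$ in $\mathcal{A}_{i+\half}(z)$, and the extraction of the identity above is fully justified.
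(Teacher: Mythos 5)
Your reduction is the same as the paper's: both arguments observe that the stated formula differs from the definition~\eqref{jm-i-2} only in the third summand, so everything comes down to the single identity $p_{i+\half}L_ip_ip_{i+\half}=(z-L_{i-\half})p_{i+\half}$. Where you genuinely diverge is in how that identity is certified. The paper proves it by writing $p_{i+\half}L_i=p_{i+\half}p_{i+1}\sigma_{i+1}$ (Proposition~\ref{prel:a}), converting $\sigma_{i+1}p_ip_{i+\half}$ into $\sigma_{i+\half}p_{i+1}p_{i+\half}$ via Proposition~\ref{z-0}, and then invoking the sandwich formula $p_{i+1}\sigma_{i+\half}p_{i+1}=(z-L_{i-\half})p_{i+1}$ of Proposition~\ref{r-2}\eqref{r-2-a}, which carries its own induction. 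You instead read the identity off the chain already displayed in the proof of Proposition~\ref{jm:commute}\eqref{jm:commute:3}, which rests only on~\eqref{vin:1}, the relations $p_{i+\half}s_i=p_{i+\half}$ and $s_ip_{i+1}p_{i+\half}=p_ip_{i+\half}$, the contractions, and the commutation of $L_{i-\half}$ with $p_{i+\half}$; that chain does indeed terminate in $zp_{i+\half}=L_{i-\half}p_{i+\half}+p_{i+\half}L_ip_ip_{i+\half}$, and your justification for moving $L_{i-\half}$ past $p_{i+\half}$ (every generator of $\mathcal{A}_{i-\half}(z)$ commutes with $p_{i+\half}$) is sound. Your route is correct and locally more economical, since it bypasses Proposition~\ref{r-2} entirely; what the paper's route buys is the explicit identification of $p_{i+\half}L_ip_ip_{i+\half}$ with the sandwich $p_{i+\half}p_{i+1}\sigma_{i+\half}p_{i+1}p_{i+\half}$, a form that is needed anyway for the tensor-space computations of \S\ref{a-1-5}, so no work is actually saved globally.
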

\begin{proof}
By Propositions~\ref{prel:a}, Proposition~\ref{z-0} and Proposition~\ref{r-2}, 
\begin{align*}
p_{i+\half}L_ip_ip_{i+\half}
&=p_{i+\half}p_{i+1}\sigma_{i+1}p_ip_{i+\half}
=p_{i+\half}p_{i+1}\sigma_{i+\half}p_{i+1}p_{i+\half}=(z-L_{i-\half})p_{i+\half}.
\end{align*}
Substituting the above expression into the definition of $L_{i+\half}$ given in~\eqref{jm-i-2} provides the required statement. 
\end{proof}
\section{Schur--Weyl Duality}\label{a-1-5}
In this section we use Schur--Weyl duality to show that the family $(L_{i+\half},L_{i+1}:i=1,2,\ldots)$ defined above, and the Jucys--Murphy elements given by Halverson and Ram~\cite{HR:2005} are in fact equal. 

Let $n=1,2,\ldots,$ and $V$ be a vector space over $\mathbb{C}$ with basis $v_1,\ldots, v_n$.  If $r=1,2,\ldots,$ the tensor product
\begin{align*}
V^{\otimes r}=\underbrace{V\otimes V\otimes \cdots \otimes V}_{\text{$r$ factors}}&& \text{has basis}&&
\{v_{i_1}\otimes v_{i_2}\otimes \cdots \otimes v_{i_r} \,|\,1\le i_1,\ldots,i_r\le n\},
\end{align*}
and is equipped, via the inclusion $\mathfrak{S}_n\subset GL_n(\mathbb{C})$,  with the diagonal $\mathfrak{S}_n$--action 
\begin{align*}
w(v_{i_1}\otimes v_{i_2}\otimes \cdots \otimes v_{i_r})w
=v_{(i_1){w}^{-1}}\otimes v_{(i_2){w}^{-1}}\otimes \cdots \otimes v_{(i_r){w}^{-1}},
&&\text{for $w\in\mathfrak{S}_n$.}
\end{align*}
Let $A_{r}(n)=\mathcal{A}_r(z)\otimes_{\mathbb{Z}[z]}\mathbb{C}$, where $z$ acts on $\mathbb{C}$ as multiplication by $n$. The action of ${A}_r(n)$ on $V^{\otimes r}$ is given~(\S3 of~\cite{HR:2005}) by
\begin{align*}
u(v_{i_1}\otimes v_{i_{2}}\otimes\cdots\otimes v_{i_{r}})
=v_{i_{(1)u^{-1}}}\otimes v_{i_{(2)u^{-1}}}\otimes\cdots\otimes v_{i_{(r)u^{-1}}},
&&\text{for $u\in \mathfrak{S}_r$,}
\end{align*} 
and for $k=1,\ldots,r-1$,  
\begin{align*}
p_{k+\half}(v_{i_1}\otimes v_{i_{2}}\otimes\cdots\otimes v_{i_{r}})=
\begin{cases}
v_{i_1}\otimes v_{i_{2}}\otimes\cdots\otimes v_{i_{r}}&\text{if $i_k=i_{k+1}$,}\\
0&\text{otherwise,}
\end{cases}
\end{align*}
and for $k=1,\ldots,r$, 
\begin{align*}
p_k(v_{i_1}\otimes \cdots\otimes v_{i_{k-1}}\otimes v_{i_{k}}\otimes v_{i_{k+1}}\otimes\cdots\otimes v_{i_{r}})=\sum_{j=1}^n 
v_{i_1}\otimes \cdots\otimes v_{i_{k-1}}\otimes v_{j}\otimes v_{i_{k+1}}\otimes\cdots\otimes v_{i_{r}}.
\end{align*}
The ${A}_{r+\half}(n)$--action on $V^{\otimes r}$ is obtained in \S3 of~\cite{HR:2005} from the action of ${A}_{r+1}(n)$ on $V^{\otimes r+1}$ by restricting to the subspace $V^{\otimes r}\otimes v_n$ and identifying $V^{\otimes r}$ with $V^{\otimes r}\otimes v_n$. The next statement asserts that  $\mathfrak{S}_n$ and $A_{r}(n)$ act as commuting operators on $V^{\otimes r}$. 
\begin{theorem}[Theorem~3.22 of~\cite{HR:2005}]\label{hr-sw}
Let $n,r\in\mathbb{Z}_{\ge0}$. Let $S_n^\lambda$ denote the irreducible $\mathfrak{S}_n$-module indexed by $\lambda$. 
\begin{enumerate}[label=(\arabic{*}), ref=\arabic{*},leftmargin=0pt,itemindent=1.5em]
\item As $(\mathbb{C}\mathfrak{S}_n,A_r(n))$--bimodules 
\begin{align*}
V^{\otimes r}\cong 
\bigoplus_{\lambda\in\hat{A}_r(n)}S^\lambda_n\otimes{A}_r^\lambda(n),
\end{align*}
where $\lambda\in\hat{A}_r(n)$ is an indexing set for the irreducible ${A}_r(n)$-modules, and the vector spaces ${A}_r^\lambda(n)$, for $\lambda\in\hat{A}_r(n)$, are irreducible ${A}_r(n)$-modules. 
\item As $(\mathbb{C}\mathfrak{S}_{n-1},A_{r+\half}(n))$--bimodules 
\begin{align*}
V^{\otimes r}\cong 
\bigoplus_{\lambda\in\hat{A}_{r+\half}(n)}S^\lambda_{n-1}\otimes{A}_{r+\half}^\lambda(n),
\end{align*}
where $\lambda\in\hat{A}_{r+\half}(n)$ is an indexing set for the irreducible ${A}_{r+\half}(n)$-modules, and the vector spaces ${A}_{r+\half}^\lambda(n)$, for $\lambda\in\hat{A}_{r+\half}(n)$, are irreducible ${A}_{r+\half}(n)$-modules. 
\end{enumerate}
\end{theorem}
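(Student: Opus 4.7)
The plan is to invoke the double centralizer theorem applied to the semisimple group algebra $\mathbb{C}\mathfrak{S}_n$ acting on $V^{\otimes r}$, using the original defining properties of the partition algebras. For part~(1), the argument proceeds in two stages. First, one verifies that the image of $A_r(n)$ under the diagrammatic action equals the full commutant $\End_{\mathfrak{S}_n}(V^{\otimes r})$. This is precisely the content of Jones~\cite{Jo:1994}, and the concrete check amounts to exhibiting the diagram basis as a spanning set for $\End_{\mathfrak{S}_n}(V^{\otimes r})$: a morphism $\phi \in \End(V^{\otimes r})$ lies in the commutant iff, when expanded in the basis $\{v_{i_1} \otimes \cdots \otimes v_{i_r} \mapsto v_{j_1} \otimes \cdots \otimes v_{j_r}\}$, its coefficients depend only on the set partition of $\{1,\ldots,r,1',\ldots,r'\}$ induced by requiring $i_a = i_b$ when $a,b$ are in the same block above and $j_a = j_b$ when $a,b$ are in the same block below (and with the block-count constraint $\leq n$). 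One then checks that each generator $p_i$, $p_{i+\half}$, $s_i$ acts in precisely this diagrammatic way, matching the assignments given in the excerpt.

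Second, with the image identified as the full commutant, the double centralizer theorem (Artin--Wedderburn, applied to the semisimple $\mathbb{C}\mathfrak{S}_n$-action) directly produces a multiplicity-free bimodule decomposition
\begin{align*}
V^{\otimes r} \cong \bigoplus_{\lambda \in \hat{A}_r(n)} S_n^\lambda \otimes A_r^\lambda(n),
\end{align*}
where $\hat{A}_r(n)$ indexes those $\lambda$ for which $\Hom_{\mathfrak{S}_n}(S_n^\lambda, V^{\otimes r}) \ne 0$, and the spaces $A_r^\lambda(n) := \Hom_{\mathfrak{S}_n}(S_n^\lambda, V^{\otimes r})$ are the multiplicity spaces. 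The double centralizer theorem guarantees simultaneously that these multiplicity spaces are pairwise non-isomorphic irreducible modules for the commutant, hence for $A_r(n)$, completing~(1).

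For part~(2), one uses Martin's definition~\cite{Mar:2000} that $A_{r+\half}(n)$ is, in its original incarnation, precisely the centraliser $\End_{\mathfrak{S}_{n-1}}(V^{\otimes r})$ of the subgroup $\mathfrak{S}_{n-1} \subset \mathfrak{S}_n$ stabilising the basis vector $v_n$. The embedding $V^{\otimes r} \hookrightarrow V^{\otimes (r+1)}$ sending $w \mapsto w \otimes v_n$ identifies $A_{r+\half}(n)$ as the subalgebra of $A_{r+1}(n)$ preserving $V^{\otimes r} \otimes v_n$, and verifying that the generators $p_i, p_{i+\half}, s_i$ for $i < r$ stabilise this subspace and act as stated is a direct diagram calculation. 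Since $\mathbb{C}\mathfrak{S}_{n-1}$ is semisimple, the double centralizer theorem now yields the analogous decomposition indexed by $\hat{A}_{r+\half}(n) \subseteq \{\lambda : \lambda \vdash n-1\}$.

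The main obstacle is the first stage of part~(1): the combinatorial identification of the diagrammatic image with the full commutant. This requires the dimension equality $\dim \End_{\mathfrak{S}_n}(V^{\otimes r}) = |A_r|$ when $n \geq 2r$, together with the verification that the diagram action is faithful in this range (and a separate semisimplicity / specialisation argument for smaller $n$, controlled by the block-count constraint $\leq n$). Once this is in place, both parts of the theorem follow uniformly.
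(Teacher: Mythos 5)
This theorem is quoted from Halverson--Ram \cite{HR:2005} (their Theorem~3.22) and the paper offers no proof of its own, so there is nothing internal to compare against; your argument --- Jones's identification of the image of $A_r(n)$ (resp.\ $A_{r+\half}(n)$ on $V^{\otimes r}\otimes v_n$) with the full commutant $\End_{\mathfrak{S}_n}(V^{\otimes r})$ (resp.\ $\End_{\mathfrak{S}_{n-1}}(V^{\otimes r})$), followed by the double centralizer theorem for the semisimple group algebra --- is correct and is essentially the argument of the cited source. One small correction: the statement as given needs only surjectivity onto the commutant, which holds for every $n$ since each spanning operator $T_d$ of the commutant lies in the image of the diagram basis, so the dimension equality and faithfulness for $n\ge 2r$ that you flag as ``the main obstacle'' belong to the separate isomorphism statement (Theorem~3.6 of \cite{HR:2005}) and are not actually required here.
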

By Theorem~3.6 of~\cite{HR:2005}, the homomorphism $A_{r}(n)\to\End_{\mathfrak{S}_n}(V^{\otimes r})$ in Theorem~\ref{hr-sw} is an isomorphism whenever $n\ge2r$.

If $1\le i,j \le n$, let $s_{i,j}\in\mathfrak{S}_n$ denote the transposition which interchanges $i$ and $j$. The next statement gives the action of the group $\langle \sigma_{i+\half},\sigma_{i+1} \,|\,i=1,\ldots,r-1\rangle$ on $V^{\otimes r}$. 
\begin{proposition}
If $k=1,2,\ldots,$ and $v_{i_1}\otimes \cdots\otimes v_{i_{k+1}}\in V^{\otimes k+1}$, then 
\begin{align}
\sigma_{k+\half}(v_{i_1}\otimes \cdots\otimes v_{i_{k+1}})
=\underbrace{s_{i_k,i_{k+1}}\otimes\cdots\otimes s_{i_k,i_{k+1}}}_{\text{$k-1$ factors}}
(v_{i_1}\otimes \cdots\otimes v_{i_{k-1}})\otimes v_{i_{k}}\otimes v_{i_{k+1}}\label{s-z-1}
\end{align}
and
\begin{align}
\sigma_{k+1}(v_{i_1}\otimes \cdots\otimes v_{i_{k+1}})
=\underbrace{s_{i_k,i_{k+1}}\otimes\cdots\otimes s_{i_k,i_{k+1}}}_{\text{$k+1$ factors}}
(v_{i_1}\otimes \cdots\otimes v_{i_{k}}\otimes v_{i_{k+1}}).\label{s-z-2}
\end{align}
\end{proposition}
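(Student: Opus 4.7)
The two formulas \eqref{s-z-1} and \eqref{s-z-2} are equivalent in view of Proposition~\ref{z-0}, which gives $\sigma_{k+1}=s_{k}\sigma_{k+\half}$. Indeed, applying $s_{k}$ to the right-hand side of \eqref{s-z-1} swaps the last two tensor factors, converting $v_{i_{k}}\otimes v_{i_{k+1}}$ into $v_{i_{k+1}}\otimes v_{i_{k}}$; but this is precisely $s_{i_{k},i_{k+1}}(v_{i_{k}})\otimes s_{i_{k},i_{k+1}}(v_{i_{k+1}})$, so the right-hand side of \eqref{s-z-1} turns into the right-hand side of \eqref{s-z-2}. It therefore suffices to establish, say, \eqref{s-z-1}, which I propose to do by induction on $k$.

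The base case $k=1$ is immediate: $\sigma_{1+\half}=1$, and the formula has the empty tensor product of swaps acting on the $0$-fold factor, so both sides equal $v_{i_{1}}\otimes v_{i_{2}}$.

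For the inductive step, I expand $\sigma_{k+\half}$ using the recursion \eqref{sigma-i-2} and evaluate each of the five summands on the basis tensor $v_{i_{1}}\otimes\cdots\otimes v_{i_{k+1}}$. The summand $s_{k-1}s_{k}\sigma_{k-\half}s_{k}s_{k-1}$ is handled by first transporting the tensor through the braid permutations and then invoking the inductive hypothesis for $\sigma_{k-\half}$ on the resulting vector, after which the final braids return the remaining indices to their positions. The four summands containing $L_{k-1}$ require the action of $L_{k-1}$ on $V^{\otimes k-1}$. I propose to obtain this by strengthening the induction: simultaneously with \eqref{s-z-1}--\eqref{s-z-2}, establish by induction on $i$ a closed formula for the action of $L_{i}$ and $L_{i+\half}$ on basis tensors, using the recursions \eqref{jm-i-1} and \eqref{jm-i-2}, which couple $L_{i}$ to $\sigma_{i+1}$ and $\sigma_{i+\half}$ just proved at the current stage.

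The main obstacle is the term-by-term bookkeeping in this coupled induction. The idempotents $p_{j-\half},p_{j},p_{j+\half}$ force coincidences among the indices $i_{j}$ for each summand to be nonzero (e.g.\ $p_{k-\half}$ demands $i_{k-1}=i_{k}$, and $p_{k+\half}$ demands $i_{k}=i_{k+1}$), so the calculation reduces to a case analysis according to which of the equalities $i_{k-1}=i_{k}$, $i_{k}=i_{k+1}$, $i_{k-1}=i_{k+1}$ hold. In each case the contributions simplify, cancellations predicted by identities like $p_{i-\half}L_{i-1}p_{i-\half}=p_{i-\half}$ from Proposition~\ref{prel:a} and the contraction relations take effect, and the five summands collapse to the single expression $s_{i_{k},i_{k+1}}^{\otimes(k-1)}(v_{i_{1}}\otimes\cdots\otimes v_{i_{k-1}})\otimes v_{i_{k}}\otimes v_{i_{k+1}}$, completing the induction.
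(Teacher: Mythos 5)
Your reduction of \eqref{s-z-2} to \eqref{s-z-1} via $\sigma_{k+1}=s_k\sigma_{k+\half}$ is fine, and your base case is fine, but the heart of your argument is never actually carried out: you assert that, after substituting the recursion \eqref{sigma-i-2} and the (simultaneously hypothesised) tensor-space formulas for $L_{i}$ and $L_{i+\half}$, ``the five summands collapse'' to the claimed expression. That collapse is precisely the content of the proposition, and describing it as bookkeeping does not discharge it. Moreover, the mechanism you describe for organising the computation is not accurate. The integer-indexed idempotents $p_j$ do \emph{not} force coincidences of indices: $p_j$ replaces the $j$-th tensor factor by $\sum_{\ell=1}^{n}v_\ell$, so summands such as $p_{i-\half}L_{i-1}s_{i-1}p_{i+\half}p_ip_{i-\half}$ introduce free summations over $\{1,\ldots,n\}$; and $L_{k-1}$ itself (by the formula you intend to prove alongside, cf.\ Proposition~\ref{d-q}) is a sum of $n$ transposition operators. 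So the computation is not a finite case analysis on the three equalities $i_{k-1}=i_k$, $i_k=i_{k+1}$, $i_{k-1}=i_{k+1}$; it requires tracking and cancelling several double sums over the index set. A coupled induction of the kind you propose can in principle be pushed through, but as written the proof has a genuine gap at its only nontrivial step.

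For comparison, the paper sidesteps this computation entirely. It defines the right-hand sides of \eqref{s-z-1} and \eqref{s-z-2} as operators $\theta_{k+\half}$, $\theta_{k+1}$ on $V^{\otimes k+1}$, observes that they commute with the diagonal $\mathfrak{S}_n$-action, and invokes Schur--Weyl duality (Theorem~\ref{hr-sw}) to conclude that they lie in the image of $A_{k+1}(n)$. It then identifies them with the images of $\sigma_{k+\half}$ and $\sigma_{k+1}$ by checking, ``by inspection,'' the short list of defining relations from the presentation of Theorem~\ref{n-pres} (involution, braid, idempotent and commutation relations), together with the fact that $\theta_{k+\half}$ and $\theta_{k+1}$ commute with $A_{k-1}(n)$ and $A_{k-\half}(n)$ respectively. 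This buys a proof whose verifications are each one-line checks on basis tensors, at the cost of relying on the double-centraliser theorem and on the new presentation established in \S\ref{a-1-3}. If you want to keep your direct route, you must either write out the five-summand cancellation in full or restructure the induction so that each summand is evaluated against the already-known actions of $p_j$, $p_{j+\half}$ and the inductively known $L_{i}$, $\sigma_{i}$ --- and in doing so you will essentially be re-deriving the identities of Proposition~\ref{prel:a} and Theorem~\ref{a-b} on tensor space.
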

\begin{proof}
The proposition is true when $k=1$. If $k=2,3,\ldots,$ observe that the linear endomorphisms defined, for $v_{i_1}\otimes \cdots\otimes v_{i_{k+1}}\in V^{\otimes k+1}$,  by 
\begin{align*}
\theta_{k+\half}:v_{i_1}\otimes \cdots\otimes v_{i_{k+1}}
&\mapsto \underbrace{s_{i_k,i_{k+1}}\otimes\cdots\otimes s_{i_k,i_{k+1}}}_{\text{$k-1$ factors}}
(v_{i_1}\otimes \cdots\otimes v_{i_{k-1}})\otimes v_{i_{k}}\otimes v_{i_{k+1}}
\intertext{and} 
\theta_{k+1}:v_{i_1}\otimes \cdots\otimes v_{i_{k+1}}
&\mapsto\underbrace{s_{i_k,i_{k+1}}\otimes\cdots\otimes s_{i_k,i_{k+1}}}_{\text{$k+1$ factors}}
(v_{i_1}\otimes \cdots\otimes v_{i_{k}}\otimes v_{i_{k+1}})
\end{align*}
commute with the diagonal action of $\mathfrak{S}_n$ on $V^{\otimes k+1}$. Thus, by Theorem~\ref{hr-sw}, $\theta_{k+\half}$ and $\theta_{k+1}$, lie in the image of the map $A_{k+1}(n)\mapsto \End_{\mathfrak{S}_n}(V^{\otimes k+1})$. Observe also that the action of $\theta_{k+\half}$ on $V^{\otimes k+1}$ commutes with the action of $A_{k-1}(n)$ on $V^{\otimes k+1}$, and the action of $\theta_{k+1}$ on $V^{\otimes k+1}$ commutes with the action of $A_{k-\half}(n)$ on $V^{\otimes k+1}$. Since $A_{k+1}(n)$ is generated by $A_{k-\half}(n)$ together with $\langle\sigma_{k+\half},\sigma_{k+1},p_k,p_{k+\half}\rangle$, to show that the map $A_{k+1}(n)\to \End_{\mathfrak{S}_n}(V^{\otimes k+1})$ sends
\begin{align*}
\sigma_{k+\half}\mapsto\theta_{k+\half}&&\text{and}&&\sigma_{k+1}\mapsto\theta_{k+1}, 
\end{align*}
it now suffices to verify that, as operators on $V^{\otimes k+1}$, 
\begin{enumerate}[label=(\roman{*}), ref=\roman{*},leftmargin=0pt,itemindent=1.5em]
\item\label{ve-i} $\theta_{k+\half}^2=\theta_{k+1}^2=1$,
\item\label{ve-ii} $\theta_{k+\half}\theta_{k+1}=\theta_{k+1}\theta_{k+\half}=s_k$, 
\item\label{ve-iii} $\theta_{k+\half}p_{k+\half}=p_{k+\half}\theta_{k+\half}=p_{k+\half}$, 
\item\label{ve-iv} $\theta_{k+\half}p_kp_{k+1}=\theta_{k+1}p_kp_{k+1}$, 
\item\label{ve-v} $p_kp_{k+1}\theta_{k+\half}=p_kp_{k+1}\theta_{k+1}$, 
\item\label{ve-vi} $\theta_{k+\half}p_k\theta_{k+\half}=\theta_{k+1}p_{k+1}\theta_{k+1}$, 
\item\label{ve-vii} $\theta_{k+\half}p_{k-\half}\theta_{k+\half}=\sigma_kp_{k+\half}\sigma_k$,
\end{enumerate}
where $s_k$ in item~\eqref{ve-ii} acts on $V^{\otimes k}$ by place permutation. Since each of~\eqref{ve-i}--\eqref{ve-vii} can be verified by inspection, the proof of the proposition is complete. 
\end{proof}
\begin{proposition}\label{d-q}
If $k=1,2,\ldots,$ and $v_{i_1}\otimes  \cdots \otimes v_{i_k}\in V^{\otimes k}$ then 
\begin{align}\label{d-q-1}
L_{k-\half}(v_{i_1}\otimes \cdots \otimes v_{i_k})
=nv_{i_1}\otimes \cdots \otimes v_{i_k}
-\sum_{j=1}^n s_{i_k,j}\otimes\cdots\otimes s_{i_k,j}(v_{i_1}\otimes \cdots \otimes v_{i_{k-1}})\otimes v_{i_k},
\end{align}
and
\begin{align}\label{d-q-2}
L_k(v_{i_1}\otimes \cdots \otimes v_{i_k})
=\sum_{j=1}^n s_{i_k,j}\otimes\cdots\otimes s_{i_k,j}(v_{i_1}\otimes \cdots \otimes v_{i_{k-1}})\otimes v_j.
\end{align}
\end{proposition}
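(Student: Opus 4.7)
The plan is simultaneous induction on $k$ for both \eqref{d-q-1} and \eqref{d-q-2}, applying each summand of the recursion \eqref{jm-i-1} for $L_{k+1}$ and of the alternative recursion $L_{k+\half}=-L_kp_{k+\half}-p_{k+\half}L_k+(z-L_{k-\half})p_{k+\half}+s_kL_{k-\half}s_k+\sigma_{k+\half}$ (the theorem stated immediately before this section) to a basis vector $v=v_{i_1}\otimes\cdots\otimes v_{i_{k+1}}$. The base cases $L_\half=0$ and $L_1=p_1$ match at $k=1$ directly: an empty tensor product of transpositions is trivial, so \eqref{d-q-1} collapses to $nv_{i_1}-nv_{i_1}=0$, while \eqref{d-q-2} collapses to $\sum_j v_j = p_1 v_{i_1}$.

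For the inductive step, I combine three ingredients: the action of the generators $s_j,p_j,p_{j+\half}$ recalled from~\S3 of~\cite{HR:2005}; the inductive hypothesis applied to $L_k$ and to $L_{k-\half}$ (the latter acting on $V^{\otimes k+1}$ via the inclusion $A_k(n)\subset A_{k+1}(n)$, i.e.\ trivially on the last tensor factor); and the formulas \eqref{s-z-1} and \eqref{s-z-2} for $\sigma_{k+\half}$ and $\sigma_{k+1}$. The key observation is that conjugation by $s_k$ turns the sum of transpositions $s_{i_k,j}$ appearing in the inductive formula into the corresponding sum of transpositions $s_{i_{k+1},j}$, thereby shifting the ``pivot index'' from $i_k$ to $i_{k+1}$ as required at level $k+1$; the role of the remaining $p_{k+\half}$-contraction terms is then to correct the single summand $j=i_k$ at which the shift fails.

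In detail, the computation of $L_{k+\half}v$ splits according as $i_k=i_{k+1}$ or $i_k\ne i_{k+1}$. In the generic case $i_k\ne i_{k+1}$, the $(z-L_{k-\half})p_{k+\half}$ term vanishes, $s_kL_{k-\half}s_kv$ yields the scalar $nv$ together with the sum $-\sum_j(s_{i_{k+1},j})^{\otimes k-1}(v_{i_1}\otimes\cdots\otimes v_{i_{k-1}})\otimes v_{i_k}\otimes v_{i_{k+1}}$, and the residual terms $-p_{k+\half}L_kv$ and $\sigma_{k+\half}v$ precisely account for the difference between $(s_{i_{k+1},j})^{\otimes k}$ acting on the first $k$ factors and $(s_{i_{k+1},j})^{\otimes k-1}$ acting on the first $k-1$ factors, namely the $j=i_k$ summand where the transposition interchanges $v_{i_k}$ and $v_{i_{k+1}}$. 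The diagonal case $i_k=i_{k+1}$ is handled by the same bookkeeping together with the $(z-L_{k-\half})p_{k+\half}$ term. The analogous (and slightly longer) calculation handles $L_{k+1}$: the three $p_{k+\half}$-terms $-s_kL_kp_{k+\half}$, $-p_{k+\half}L_ks_k$, and $p_{k+\half}L_kp_{k+1}p_{k+\half}$ collectively correct the $j=i_k$ summand produced by $s_kL_ks_k+\sigma_{k+1}$.

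The main obstacle is the combinatorial bookkeeping: each summand in the recursion generates a sum over $j\in\{1,\ldots,n\}$, and one must isolate the contributions from $j=i_k$ and $j=i_{k+1}$ against the generic $j$ terms, while tracking how conjugation by $s_k$ interchanges the roles of $i_k$ and $i_{k+1}$ inside each transposition $s_{i_k,j}$. Once this index accounting is organised cleanly, the identification with the right-hand sides of \eqref{d-q-1} and \eqref{d-q-2} reduces to inspection.
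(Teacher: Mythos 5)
Your argument is correct, and my spot-checks of the index bookkeeping (both the generic case $i_k\ne i_{k+1}$ and the diagonal case $i_k=i_{k+1}$, for both recursions) confirm that the cancellation mechanism you describe really does work: $s_kL_ks_k$ (resp.\ $s_kL_{k-\half}s_k$) produces the sum with pivot $i_{k+1}$ but with the wrong $j=i_k$ summand, and the $p_{k+\half}$--terms together with $\sigma_{k+1}$ (resp.\ $\sigma_{k+\half}$) delete that summand and replace it with the correct one. However, this is a genuinely different route from the paper's. The paper does not unwind the recursion at all: it identifies $V^{\otimes k}$ with $V^{\otimes k}\otimes v_n\subseteq V^{\otimes k+1}$ and computes the single operator $p_{k+1}\sigma_{k+\half}p_{k+1}$ (resp.\ $p_{k+1}\sigma_{k+1}p_{k+1}$) on $v_{i_1}\otimes\cdots\otimes v_{i_k}\otimes v_n$ in two ways --- once diagrammatically via the tensor-space formulas \eqref{s-z-1}--\eqref{s-z-2} for the $\sigma$'s, and once algebraically via Proposition~\ref{r-2}, which says $p_{k+1}\sigma_{k+\half}p_{k+1}=(z-L_{k-\half})p_{k+1}$ and $p_{k+1}\sigma_{k+1}p_{k+1}=L_kp_{k+1}$ --- and then reads off \eqref{d-q-1} and \eqref{d-q-2} by comparison. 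That approach buys a short, case-free derivation with no induction beyond what is already packaged into Proposition~\ref{r-2}; yours buys independence from Proposition~\ref{r-2} at the cost of a longer term-by-term verification, which you have only sketched and would need to write out in full (including the split into $i_k=i_{k+1}$ versus $i_k\ne i_{k+1}$) for a complete proof.
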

\begin{proof}
Identify $V^{\otimes k}$ as the subspace $V^{\otimes k}\otimes \sum_{j=1}^n v_j\subseteq V^{\otimes k+1}$. Then
\begin{align*}
&p_{k+1}\sigma_{k+\half}p_{i+1}(v_{i_1}\otimes\cdots\otimes v_{i_k}\otimes v_n)
=p_{k+1}\sum_{j=1}^n \sigma_{k+\half}(v_{i_1}\otimes\cdots\otimes v_{i_k}\otimes v_j)\\
\quad&=p_{k+1}\sum_{j=1}^n s_{i_k,j}\otimes\cdots\otimes s_{i_k,j} (v_{i_1}\otimes\cdots\otimes v_{i_{k-1}})\otimes v_{i_k}\otimes v_j\\
\quad&=\sum_{j,\ell=1}^n s_{i_k,j}\otimes\cdots\otimes s_{i_k,j} (v_{i_1}\otimes\cdots\otimes v_{i_{k-1}})\otimes  v_{i_k}\otimes v_\ell.
\end{align*}
By Proposition~\ref{r-2}, as operators on $V^{\otimes k}$, 
\begin{align*}
p_{k+1}\sigma_{k+\half}p_{i+1}(v_{i_1}\otimes\cdots\otimes v_{i_k}\otimes v_n)
&=(n-L_{i-\half})p_{i+1}(v_{i_1}\otimes\cdots\otimes v_{i_k}\otimes v_n)\\
&=\sum_{\ell=1}^n(n-L_{i-\half})(v_{i_1}\otimes\cdots\otimes v_{i_k}\otimes v_\ell),
\end{align*}
and the statement~\eqref{d-q-1} follows. Next, 
\begin{align*}
&p_{k+1}\sigma_{k+1}p_{k+1}(v_{i_1}\otimes\cdots\otimes v_{i_k}\otimes v_n)
=p_{k+1}\sum_{j=1}^n\sigma_{k+1}(v_{i_1}\otimes\cdots\otimes v_{i_k}\otimes v_j)\\
\quad &=p_{k+1}\sum_{j=1}^n\sigma_{k+\half}(v_{i_1}\otimes\cdots\otimes v_{i_{k-1}}\otimes v_{j}\otimes v_{i_k})\\
\quad &=p_{k+1}\sum_{j=1}^n s_{i_k,i_{k+1}}\otimes\cdots\otimes s_{i_k,i_{k+1}}(v_{i_1}\otimes\cdots\otimes v_{i_{k-1}}\otimes v_{j})\otimes v_{i_k}\\
\quad &=\sum_{j,\ell=1}^ns_{i_k,i_{k+1}}\otimes\cdots\otimes s_{i_k,i_{k+1}}(v_{i_1}\otimes\cdots\otimes v_{i_{k-1}}\otimes v_{j})\otimes v_{\ell}.
\end{align*}
By Proposition~\ref{r-2}, as operators on $V^{\otimes k}$, 
\begin{align*}
p_{k+1}\sigma_{k+1}p_{k+1}(v_{i_1}\otimes\cdots\otimes v_{i_k}\otimes v_n)
&=L_{k}p_{k+1}(v_{i_1}\otimes\cdots\otimes v_{i_k}\otimes v_n)\\
&=\sum_{\ell=1}^nL_{k}(v_{i_1}\otimes\cdots\otimes v_{i_k}\otimes v_\ell), 
\end{align*}
which yields~\eqref{d-q-2}. 
\end{proof}
As in~\S3 of~\cite{HR:2005}, let $\kappa_n$ be the central element which is the class sum corresponding to the conjugacy class of transpositions in $\mathbb{C}\mathfrak{S}_n$, 
\begin{align*}
\kappa_{n}=\sum_{1\le i<j\le n}^n s_{i,j}.
\end{align*}
For $\ell=1,\ldots,n$, we also define
\begin{align*}
\kappa_{n,\ell}=\sum_{\substack{1\le i<j\le n \\ i,j\ne \ell}}^ns_{i,j},
\end{align*}
so that $\kappa_{n,n}=\kappa_{n-1}$. 
\begin{proposition}\label{d-a}
Let $n=\dim(V)$ and $r\in\mathbb{Z}_{>0}$. If $z_{r-\half}\in A_{r-\half}(n)$ and $z_{r}\in A_{r}(n)$ are the central elements defined by Theorem~\ref{cent}, and $v_{i_1}\otimes  \cdots \otimes v_{i_r}\in V^{\otimes r}$, then 
\begin{align}
z_{r}(v_{i_1}\otimes  \cdots \otimes v_{i_r})=\kappa_n(v_{i_1}\otimes \cdots \otimes v_{i_r})-\big({\textstyle{{n}\choose{2}}}-rn\big)(v_{i_1}\otimes \cdots \otimes v_{i_r}),\label{d-a-1}
\end{align}
and, if $r=2,3,\ldots,$ then, 
\begin{align}
z_{r-\half}(v_{i_1}\otimes \cdots \otimes v_{i_{r}})
=\kappa_{n,i_{r}}(v_{i_1}\otimes \cdots \otimes v_{i_{r}})-\big(\textstyle{\textstyle{{n}\choose{2}}}-rn+1\big)(v_{i_1}\otimes \cdots \otimes v_{i_{r}}).\label{d-a-2}
\end{align}
\end{proposition}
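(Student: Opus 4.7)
The plan is to prove \eqref{d-a-1} and \eqref{d-a-2} simultaneously by induction on $r$, exploiting the recursions $z_{r-\half} = z_{r-1} + L_{r-\half}$ and $z_r = z_{r-\half} + L_r$ together with the explicit formulas of Proposition~\ref{d-q}. The base case $r=1$ for \eqref{d-a-1} is immediate: since $L_\half = 0$ and $L_1 = p_1$, a direct computation gives $z_1 v_{i_1} = p_1 v_{i_1} = \sum_{j=1}^n v_j$, and the right-hand side of \eqref{d-a-1} evaluates to the same thing using $\kappa_n v_{i_1} = \tbinom{n-1}{2} v_{i_1} + \sum_{m \ne i_1} v_m$. (The analogous check at $r=1$ for \eqref{d-a-2} gives $0 = 0$ and can be absorbed into the base case.)

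For the inductive step it is convenient to introduce notation: for $w \in \mathfrak{S}_n$, write $w^{\sharp}$ for its diagonal action on all $r$ factors of $V^{\otimes r}$ and $w^{\flat}$ for the same action on only the first $r-1$ factors (identity on the last). These agree on $v = v_{i_1} \otimes \cdots \otimes v_{i_r}$ whenever $w$ fixes $i_r$. Since $A_{r-1}(n) \hookrightarrow A_r(n)$ acts on $V^{\otimes r}$ by tensoring with the identity on the last factor, the inductive hypothesis reads $z_{r-1}(v) = \kappa_n^{\flat}(v) - \bigl(\tbinom{n}{2} - (r-1)n\bigr) v$. Using Proposition~\ref{d-q} with the convention $s_{i_r,i_r}=1$, one has $L_r(v) = v + \sum_{j \ne i_r} s_{i_r,j}^{\sharp}(v)$ and $L_{r-\half}(v) = (n-1)v - \sum_{j \ne i_r} s_{i_r,j}^{\flat}(v)$, so that
\begin{equation*}
L_r(v) + L_{r-\half}(v) = nv + \sum_{j \ne i_r}\bigl(s_{i_r,j}^{\sharp}(v) - s_{i_r,j}^{\flat}(v)\bigr) = nv + \kappa_n^{\sharp}(v) - \kappa_n^{\flat}(v),
\end{equation*}
where the last equality uses the key observation that a transposition $s_{k,l}$ with $i_r \notin \{k,l\}$ fixes $v_{i_r}$, so its $\sharp$ and $\flat$ actions on $v$ coincide and drop out of the difference $\kappa_n^{\sharp}(v) - \kappa_n^{\flat}(v)$.

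Adding this to the inductive formula for $z_{r-1}(v)$ telescopes $\kappa_n^{\flat}$ out and delivers \eqref{d-a-1}. For \eqref{d-a-2}, one analogously computes
\begin{equation*}
z_{r-\half}(v) = z_{r-1}(v) + L_{r-\half}(v) = \Bigl[\kappa_n^{\flat} - \sum_{j \ne i_r} s_{i_r,j}^{\flat}\Bigr](v) - \bigl(\tbinom{n}{2} - rn + 1\bigr) v,
\end{equation*}
and then applies the decomposition $\kappa_n = \kappa_{n,i_r} + \sum_{m \ne i_r} s_{i_r,m}$ together with the fact that every transposition comprising $\kappa_{n,i_r}$ fixes $v_{i_r}$, so $\kappa_{n,i_r}^{\flat}(v) = \kappa_{n,i_r}^{\sharp}(v) = \kappa_{n,i_r}(v)$. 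I do not anticipate any serious obstacle; the only subtlety is careful bookkeeping of the distinction between the diagonal $\sharp$-action and the semi-diagonal $\flat$-action arising from the two shapes of formula in Proposition~\ref{d-q}, and the main content is the cancellation identity $\kappa_n^{\sharp} - \kappa_n^{\flat} = \sum_{j \ne i_r}(s_{i_r,j}^{\sharp} - s_{i_r,j}^{\flat})$ on $v$.
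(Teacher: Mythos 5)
Your proposal is correct and follows essentially the same route as the paper: induction on $r$ via the recursions $z_{r-\half}=z_{r-1}+L_{r-\half}$ and $z_r=z_{r-\half}+L_r$, the formulas of Proposition~\ref{d-q}, and the decomposition $\kappa_n=\kappa_{n,i_r}+\sum_{j\ne i_r}s_{i_r,j}$ together with the observation that transpositions fixing $i_r$ act identically on the first $r-1$ factors and on all $r$ factors. Your $\sharp/\flat$ notation merely repackages the two displayed identities relating $\kappa_n$ and $\kappa_{n,i_r}$ that the paper writes out explicitly.
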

\begin{proof}
The proof is by induction on $r$. Let $i=1,\ldots,n$. Since $z_1=p_1$, 
\begin{align*}
\kappa_nv_i
&=\sum_{i>j} s_{j,i} v_i + \sum_{j>i}s_{i,j}v_j+\sum_{\substack{j,\ell\\ j,\ell\ne i}}s_{j,\ell}v_i\\
&=\sum_{j=1}^nv_j+\big({\textstyle{{n-1}\choose{2}}}-1\big)v_i\\
&=z_1v_{i}+\big({\textstyle{{n}\choose{2}}}-n\big)v_i,
\end{align*}
which verifies~\eqref{d-a-1} when $r=1$.  Now observe that if $v_{i_1}\otimes \cdots\otimes v_{i_k}\in V^{\otimes r}$, then the diagonal action of  $\kappa_n$ and $\kappa_{n,i_r}$ on $V^{\otimes k}$ allows us to write 
\begin{align*}
\kappa_n(v_{i_1}\otimes \cdots\otimes v_{i_k})=\kappa_{n,i_k}(v_{i_1}\otimes \cdots\otimes v_{i_k})
+\sum_{\substack{j=1\\ j\ne i_k}}^n s_{i_k,j}\otimes\cdots\otimes s_{i_k,j}(v_{i_1}\otimes \cdots\otimes v_{i_{k-1}})\otimes v_{j},
\end{align*}
and 
\begin{multline*}
\kappa_{n,i_k}(v_{i_1}\otimes \cdots\otimes v_{i_k})=\kappa_{n}(v_{i_1}\otimes \cdots\otimes v_{i_{k-1}})\otimes v_{i_k}\\
-\sum_{\substack{j=1\\ j\ne i_k}}^n s_{i_k,j}\otimes\cdots\otimes s_{i_k,j}(v_{i_1}\otimes \cdots\otimes v_{i_{k-1}})\otimes v_{i_k}.
\end{multline*}
Assuming that~\eqref{d-a-1} holds for $r=1,\ldots,k-1,$  we obtain
\begin{align*}
z_{k-\half}(v_{i_1}\otimes\cdots\otimes v_{i_k})
&=z_{k-1}(v_{i_1}\otimes\cdots\otimes v_{i_k})
+L_{k-\half}(v_{i_1}\otimes\cdots\otimes v_{i_k})\\
&=\kappa_n(v_{i_1}\otimes\cdots\otimes v_{i_{k-1}})\otimes v_{i_k}
-\big(\textstyle{\textstyle{{n}\choose{2}}}-(k-1)n\big)(v_{i_1}\otimes  \cdots \otimes v_{i_{k}})\\
&\quad+nv_{i_1}\otimes\cdots\otimes v_{i_k}
-\sum_{j=1}^ns_{i_k,j}\otimes\cdots\otimes s_{i_k,j}(v_{i_1}\otimes\cdots\otimes v_{i_{k-1}})\otimes v_{i_k}\\
&=\kappa_n(v_{i_1}\otimes\cdots\otimes v_{i_{k-1}})\otimes v_{i_k}
-\big(\textstyle{\textstyle{{n}\choose{2}}}-kn+1\big)(v_{i_1}\otimes  \cdots \otimes v_{i_{k}})\\
&\quad-\sum_{\substack{j=1\\j\ne i_k}}^ns_{i_k,j}\otimes\cdots\otimes s_{i_k,j}(v_{i_1}\otimes\cdots\otimes v_{i_{k-1}})\otimes v_{i_k}\\
&=\kappa_{n,i_k}(v_{i_1}\otimes\cdots\otimes v_{i_k})
-\big(\textstyle{\textstyle{{n}\choose{2}}}-kn+1\big)(v_{i_1}\otimes  \cdots \otimes v_{i_{k}}),
\end{align*}
which verifies~\eqref{d-a-2} for $r=k$. Next, since~\eqref{d-a-2} holds for $r=2,3,\ldots,k,$  we obtain 
\begin{align*}
z_k(v_{i_1}\otimes \cdots\otimes v_{i_k})&=z_{k-\half}(v_{i_1}\otimes\cdots\otimes v_{i_k})+L_{k}(v_{i_1}\otimes\cdots\otimes  v_{i_k})\\
&=\kappa_{n,i_k}(v_{i_1}\otimes \cdots\otimes  v_{i_k})-\big(\textstyle{\textstyle{{n}\choose{2}}}-kn+1\big)(v_{i_1}\otimes  \cdots \otimes v_{i_{k}})\\
&\quad+\sum_{j=1}^n s_{i_k,j}\otimes\cdots\otimes s_{i_k,j}(v_{i_1}\otimes \cdots \otimes v_{i_{k-1}})\otimes v_j\\
&=\kappa_{n,i_k}(v_{i_1}\otimes \cdots\otimes  v_{i_k})-\big(\textstyle{\textstyle{{n}\choose{2}}}-kn\big)(v_{i_1}\otimes  \cdots \otimes v_{i_{k}})\\
&\quad+\sum_{\substack{j=1\\j\ne i_k}}^ns_{i_k,j}\otimes\cdots\otimes s_{i_k,j}(v_{i_1}\otimes \cdots \otimes v_{i_{k-1}})\otimes v_j\\
&=\kappa_{n}(v_{i_1}\otimes \cdots\otimes  v_{i_k})-\big(\textstyle{\textstyle{{n}\choose{2}}}-kn\big)(v_{i_1}\otimes  \cdots \otimes v_{i_{k}}),
\end{align*}
which verifies~\eqref{d-a-1} for $r=k$. 
\end{proof}
Let $Z_k\in A_k(n)$ and $Z_{k+\half}\in A_{k+\half}(n)$ denote the central element defined by Halverson and Ram in~\S3 of~\cite{HR:2005}. Then the Jucys--Murphy elements of~\cite{HR:2005} are given by 
\begin{align*}
M_{k-\half}=Z_{k-\half}-Z_{k-1}&&\text{and}&&M_{k}=Z_{k}-Z_{k-\half}&&\text{for $k=1,2,\ldots.$}
\end{align*}
\begin{theorem}\label{t-r-0}
if $k=1,2,\ldots,$ then $M_{k+\half}=L_{k+\half}$ and $M_{k+1}=L_{k+1}$ as elements of $A_{k+1}(n)$.
\end{theorem}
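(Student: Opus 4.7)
The plan is to use Schur--Weyl duality to reduce the asserted identity to an equality of operators on $V^{\otimes(k+1)}$. On our side, Proposition~\ref{d-q} (with the index shifted by one) gives the action of $L_{k+\half}$ and $L_{k+1}$ on $V^{\otimes(k+1)}$ explicitly, and Proposition~\ref{d-a} identifies the central sums $z_r,z_{r-\half}$ with the class sums $\kappa_n$ and $\kappa_{n,i_r}$ up to explicit scalars depending only on $n,r$. On the Halverson--Ram side, $M_{k+\half}=Z_{k+\half}-Z_{k}$ and $M_{k+1}=Z_{k+1}-Z_{k+\half}$ are telescoping differences of the HR central elements, whose tensor-space action is computed in~\S3 of~\cite{HR:2005} in terms of the same class sums $\kappa_n,\kappa_{n,i_r}$.

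First I would reconcile the normalisations: whatever scalar offset separates $Z_r,Z_{r-\half}$ from our $z_r,z_{r-\half}$ depends only on $r$ and $n$, and therefore cancels in the telescoping differences that define $M$ and $L$. Combined with Proposition~\ref{d-q}, this shows that the operators $L_{k+1}$ and $M_{k+1}$ agree on every basis vector, acting by
\[
v_{i_1}\otimes\cdots\otimes v_{i_{k+1}}\longmapsto \sum_{j=1}^{n} s_{i_{k+1},j}\otimes\cdots\otimes s_{i_{k+1},j}\bigl(v_{i_1}\otimes\cdots\otimes v_{i_{k}}\bigr)\otimes v_j,
\]
and similarly $L_{k+\half}$ and $M_{k+\half}$ agree on $V^{\otimes(k+1)}$ as the operator given in Proposition~\ref{d-q}.

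Next, by Theorem~3.6 of~\cite{HR:2005}, the Schur--Weyl map $A_{k+1}(n)\to\End_{\mathfrak{S}_n}(V^{\otimes(k+1)})$ is injective whenever $n\ge 2(k+1)$, so the agreement of operators forces $L_{k+\half}=M_{k+\half}$ and $L_{k+1}=M_{k+1}$ in $A_{k+1}(n)$ for every such $n$. To extend these equalities to arbitrary $n$, I would pass to the generic algebra $\mathcal{A}_{k+1}(z)$: the coefficients of $L-M$ in the diagram basis are polynomials in $\mathbb{Z}[z]$ that vanish at every integer $n\ge 2(k+1)$, hence are identically zero, and specialising $z\mapsto n$ recovers the statement in every $A_{k+1}(n)$.

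The hard part will be the first step: cleanly identifying the scalar offsets that relate the HR central elements $Z_r,Z_{r-\half}$ to the recursive sums $z_r,z_{r-\half}$ on tensor space, so that the differences are genuinely matched. Once this bookkeeping is in place, everything else is formal --- the common tensor-space operator is furnished by Proposition~\ref{d-q}, and the combination of Schur--Weyl injectivity with a polynomial density argument completes the proof.
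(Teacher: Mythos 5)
Your overall strategy is exactly the paper's: reduce to operators on tensor space via Schur--Weyl duality, match the actions, and then combine injectivity of $A_{k+1}(n)\to\End_{\mathfrak{S}_n}(V^{\otimes (k+1)})$ for $n\ge 2(k+1)$ with the fact that the diagram-basis coefficients are polynomials in $n$ to upgrade the identity to all $n$. The last two steps are fine as you describe them.

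The flaw is in your first step. You assert that whatever scalar offset separates $Z_r,Z_{r-\half}$ from $z_r,z_{r-\half}$ ``depends only on $r$ and $n$, and therefore cancels in the telescoping differences.'' An offset depending on $r$ does \emph{not} cancel in a telescoping difference: if $Z_r=z_r+f(r,n)$ then $M_r-L_r=f(r,n)-f(r-\half,n)$, which vanishes only when consecutive offsets agree. The paper's closing Remark is precisely a counterexample: $z_{\half}=0$ while $Z_{\half}=1$, yet $z_1=Z_1=p_1$, so $M_1=p_1-1\ne p_1=L_1$ --- which is why the theorem starts at $k=1$. So the ``bookkeeping'' you defer as the hard part is not a formality that your cancellation argument disposes of; it is the entire content of the proof. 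The paper settles it by showing the offsets are in fact \emph{zero} for all indices $\ge 1$: Proposition~\ref{d-a} gives $z_k=\kappa_n-\bigl(\binom{n}{2}-kn\bigr)$ and $z_{k+\half}=\kappa_{n,i_{k+1}}-\bigl(\binom{n}{2}-(k+1)n+1\bigr)$ on tensor space, and after identifying $V^{\otimes k}$ with $V^{\otimes k}\otimes v_n$ these coincide term by term with the formulas for $Z_k$ and $Z_{k+\half}$ in Theorem~3.35 of~\cite{HR:2005}. Replace the cancellation claim with this explicit comparison of the central elements (or, equivalently, verify directly that the operators of Proposition~\ref{d-q} equal the Halverson--Ram expressions for $M_{k+\half}$ and $M_{k+1}$), and your argument closes.
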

\begin{proof}
By Theorem~3.6 of~\cite{HR:2005}, the homomorphism $A_{k}(n)\to\End_{\mathfrak{S}_n}(V^{\otimes k})$ is an isomorphism whenever $n\ge2k$. Since the coefficients in the expansions of $Z_k,Z_{k+\half}$ and $z_k,z_{k+\half}$, in terms of the diagram basis for $A_{k+1}(n)$, are polynomials in $n$, and the map $A_{k}(n)\to \End_{\mathfrak{S}_n}(V^{\otimes k})$ is an isomorphism for infinitely many values of $n$, to prove the theorem, it suffices to compare the action of $z_k$ and $Z_{k}$ (\emph{resp.} $z_{k+\half}$ and $Z_{k+\half}$) on $V^{\otimes k}$ for an arbitrary choice of $n$. Identifying $V^{\otimes k}$ with the subspace $V^{\otimes k}\otimes v_n\subseteq V^{\otimes k+1}$, and $\kappa_{n-1}$ with $\kappa_{n,n}$, then Theorem~3.35 of~\cite{HR:2005} states that, as operators on $V^{\otimes k}$,
\begin{align}\label{hr-action}
Z_k=\kappa_n-\big(\textstyle{{n}\choose{2}}-kn\big)
&&\text{and}&&
Z_{k+\half}=\kappa_{n-1}-\big(\textstyle{{n}\choose{2}}-(k+1)n+1\big).
\end{align}
Comparing~\eqref{hr-action} with the action of $z_k$ and $z_{k+\half}$ on $V^{\otimes k}$ in~\eqref{d-a-1} and~\eqref{d-a-2} completes the proof. 
\end{proof}
\begin{remark}
Whereas $L_\half=0$ and $L_1=p_1$, in~\cite{HR:2005}, the first three Jucys--Murphy elements are $M_0=M_{\half}=1$, and $M_1=p_1-1$. Thus, although $z_1=Z_1$ as elements of $A_1(n)$,  Theorem~\ref{t-r-0} cannot be extended to the case $k=0$. 
\end{remark}


\end{document}